\numberwithin{equation}{section}
\newtheorem{theorem}{Theorem}[section]
\newtheorem{lemma}[theorem]{Lemma}
\newtheorem{proposition}[theorem]{Proposition}
\newtheorem{corollary}[theorem]{Corollary}
\newtheorem{remark}[theorem]{Remark}
\newtheorem{definition}[theorem]{Definition}
\newtheoremstyle{Claim}{}{}{\itshape}{}{\itshape\bfseries}{:}{ }{#1}
\theoremstyle{Claim}
\newcommand{\NN}{{\mathbb{N}}}
\newcommand{\EE}{\mathbb{E}}
\newcommand{\PP}{\mathbb{P}}
\newcommand{\R}{\mathbb{R}}
\newcommand{\N}{\mathbb{N}}
\newcommand{\mA}{\mathcal A}
\newcommand{\mC}{\mathcal C}
\def\bq{{\mathsf{q}}}
\def\bm{{\mathsf{m}}}
\def\sU{{\mathsf{U}}}
\def\sV{{\mathsf{V}}}
\def\su{{\mathsf{u}}}
\def\sv{{\mathsf{v}}}
\def\i{{(\iota)}}
\def\bfs{{\mathbf{s}}}
\def\bV{{\mathbf{V}}}
\def\bG{{\mathbf{G}}}
\def\bjm{{\bar{\jmath}}}
\newcommand{\pd}{\partial}
\newcommand{\ux}{{\underline x}}
\newcommand{\bc}{\mathbf{c}}
\newcommand{\bU}{\mathbf{U}}
\newcommand{\beq}{\begin{equation}}
\newcommand{\eeq}{\end{equation}}
\newcommand{\e}{\epsilon}
\theoremstyle{plain}
\def\sideremark#1{\ifvmode\leavevmode\fi\vadjust{
		\vbox to0pt{\hbox to 0pt{\hskip\hsize\hskip1em
				\vbox{\hsize3cm\tiny\raggedright\pretolerance10000
					\noindent #1\hfill}\hss}\vbox to8pt{\vfil}\vss}}}
\newcommand{\D}{\Delta}
\newcommand{\Dx}{{\Delta x}}
\title{A semi-Lagrangian method for solving state constraint Mean Field Games in Macroeconomics}
\author{Fabio Camilli\thanks{Department of Engineering and Geology, University of  Chieti-Pescara ``G. d'Annunzio''}, Qing Tang\thanks{School of Mathematics and Physics, China University of Geosciences (Wuhan)}, Yong-shen Zhou\thanks{Department of Basic and Applied Sciences for Engineering, University of Rome La Sapienza}}
\date{}
\begin{document}
\maketitle

\begin{abstract} 
We study continuous-time heterogeneous agent models cast as Mean Field Games, in the Aiyagari-Bewley-Huggett framework. The model couples a Hamilton–Jacobi–Bellman   equation for individual optimization with a Fokker–Planck–Kolmogorov   equation for the wealth distribution. We establish a comparison principle for constrained viscosity solutions of the HJB equation and propose a semi-Lagrangian (SL) scheme for its numerical solution, proving convergence via the Barles–Souganidis method. A policy iteration algorithm handles state constraints, and a dual SL scheme is used for the FPK equation. Numerical methods are presented in a fully discrete, implementable form.
\end{abstract}

\section{Introduction}
This paper studies the continuous-time heterogeneous agent models developed by Achdou, Han, Lasry, Lions, and Moll \cite{achdou2022income}, which recast the classical Aiyagari-Bewley-Huggett models \cite{huggett1993risk,aiyagari1994uninsured,ljungqvist2018recursive} in the language of Mean Field Games (MFG). These models describe economies with a continuum of agents who are ex ante identical but become ex post heterogeneous due to idiosyncratic income shocks and borrowing constraints. 
The methodology developed in \cite{achdou2022income} for continuous-time heterogeneous agent models has been widely applied in the macroeconomic literature to study topics such as wealth distribution, income inequality, and fiscal policy \cite{ahn2018inequality,kaplan2018monetary,bhandari2021inequality,fernandez2023financial}. A finite difference scheme for numerically solving the associated PDE systems was proposed in \cite{achdou2022income}, while \cite{bilal2023solving} introduced a perturbation approach to solve a master equation in this context. More recently, deep learning techniques have been employed for high-dimensional settings in \cite{achdou2022simulating,gu2024global}. \par
In the MFG approach, each agent maximizes their inter-temporal utility while taking the interest rate as given. This leads to a system of coupled partial differential equations describing both individual behavior and the evolution of the population distribution. Concretely, each agent solves the following stochastic control problem, given an interest rate $r$:
\begin{equation}\label{control_problem}
	\mathbb{E}\left[\int_0^{\infty} e^{-\rho t} u(c_t) \, dt\right], \quad \text{subject to} \quad
\begin{cases}
	dx_t = (r x_t + y_t - c_t)dt, \\
	x_t \geq \underline{x},
\end{cases}
\end{equation}
where $x_t$ denotes wealth, $c_t$ consumption, $y_t \in \{y_1, y_2\}$ is the income modeled by a Poisson process, $\rho$ is the discount rate and the utility function  $u$ is  strictly increasing and concave.
The optimal control problem leads to a weakly coupled  system of Hamilton-Jacobi-Bellman (HJB) equations for the value functions $v_j(x)$, $j \in \{1, 2\}$, of agents in income state $y_j$:
$$
\rho v_j(x) = \sup_{c \ge 0} \left\{ u(c) + (r x + y_j - c) Dv_j \right\} + \lambda_j (v_{\bar{\jmath}}(x) - v_j(x)), \quad \bar{\jmath} = 3 - j,
$$
subject to a state constraint at $\underline{x}$. 
In the spirit of Bewley-type models, the equilibrium interest rate $r$ is not fixed exogenously but determined endogenously based on aggregate variables such as capital and productivity. The wealth-income distribution of agents is given by a measure $dm$ on $[\ux,\infty)\times \{y_1,y_2\}$ of the form $dm=\sum_{j\in\{1,2\}}dm_{j}(x)\otimes\delta_{y_j}(y)$ where $dm_{j}(x)$ is a measure on $[\ux,\infty)$. We assume $m_j$ is the sum of  an absolutely continuous part with density $g_j$ and possibly a Dirac mass at $x=\ux$: for each set $\Omega= [\ux,R]$ with $R<+\infty$, 
\beq\label{eq:mg}
m_j(\Omega)=\int_{\Omega}g_j(x)dx+\mu_j\delta_{\Omega}(\ux).
\eeq

The density $g_j$, in the region $x>\ux$, is described by the solution to a stationary Fokker-Planck-Kolmogorov (FPK) equation in the sense of distributions:
$$
- \frac{d}{dx} \left[ (rx + y_j - c_j(x)) g_j(x) \right] + \lambda_{\bar \jmath}g_{\bar \jmath}(x) - \lambda_j g_j(x) = 0,\quad \sum_j\int_{x>\ux}g_j(x)dx+\mu_j=1.
$$

The total (aggregate) capital and labor in the economy are given by:
\begin{equation}\label{def K N}
	K[m] = \sum_{j=1}^{2} \int_{x > \underline{x}} x g_j(x)dx+\mu_j\ux , \quad N[m] = \sum_{j=1}^{2} \int_{x> \underline{x}} y_j g_j(x) \, dx+y_j\mu_j.
\end{equation}
We now describe two classical closures of this system: the Huggett and Aiyagari models.\par

\textbf{Huggett Model:} In the Huggett framework \cite{huggett1993risk}, agents can borrow or lend at the interest rate $r$, and total net borrowing in the economy is fixed at a value $B$. The stationary recursive equilibrium is summarized by the following MFG system:
\begin{equation}\label{MFG}
	\left\{
\begin{aligned}
	&(i) \quad && \rho v_j(x) = \sup_{c \ge 0} \left\{ u(c) + (r x + y_j - c) Dv_j(x) \right\} + \lambda_j (v_{\bar{\jmath}}(x) - v_j(x)), \\
	&&& c_j^*(x) = \arg\max_{c \ge 0} \left\{ u(c) + (r x + y_j - c) Dv_j(x) \right\}, \\
	&(ii) \quad && - \frac{d}{dx} \left[ (r x + y_j - c_j^*(x)) g_j(x) \right] + \lambda_{\bar \jmath} g_{\bar \jmath}(x) - \lambda_j g_j(x) = 0, \\
	&&& \sum_j\int_{x>\ux}g_j(x)dx+\mu_j=1,
\end{aligned}
\right.
\end{equation}
together with the equilibrium condition:
\begin{equation}\label{Huggett}
	(iii_H) \quad K[m] = B.
\end{equation}
\par
\textbf{Aiyagari Model:}
The Aiyagari model interprets individual assets $x$ as holdings of physical capital. The aggregate economy is characterized by a Cobb-Douglas production function with total factor productivity $A$, capital depreciation rate $\delta$, and capital share $\alpha \in (0,1)$:
$
F(K, N) = A K^\alpha N^{1 - \alpha}.
$
The interest rate $r$ is determined from the marginal productivity of capital:
\begin{equation}\label{Aiyagari}	
(iii_A) \quad r = A \alpha \left( \frac{K[m]}{N[m]} \right)^{\alpha - 1} - \delta, 
\end{equation}
which is derived from the first-order condition $\partial_K F = r + \delta$. In fact, the $N[m]$ in the stationary Aiyagari model with \eqref{Aiyagari} has an explicit expression. From \cite[Eq. (32), p. 64]{achdou2022income} the measure $dm_j$ satisfies 
\beq\label{Ng}
\int_{x\geq \ux}dm_j=\int_{x>\ux}g_j(x)dx+\mu_j= \frac{\lambda_{\bar \jmath}}{\lambda_j + \lambda_{\bar \jmath}}, \quad \text{hence} \quad
N[m] = \frac{y_{\bar \jmath} \lambda_j + y_j \lambda_{\bar \jmath}}{\lambda_j + \lambda_{\bar \jmath}}.
\eeq
In \cite{achdou2022income}, the notion of a  constrained viscosity solution  was proposed as the appropriate framework for the HJB equation arising in heterogeneous agent models. This notion has been studied in depth in the literature (see \cite{alvarez1997convex, bardi1997optimal, capuzzo1990hamilton, Soner_1986}), with related developments for weakly coupled monotone systems in, e.g., \cite{ishii1991viscosity}. However, these results do not directly apply to our setting. Specifically, when the utility function $u$ has a CRRA form, the associated Hamiltonian \eqref{def H} is finite only for $p \geq 0$, an assumption that lies outside the standard framework in the viscosity solution literature.

As a first theoretical contribution, we establish a strong comparison principle between upper semi-continuous subsolutions and lower semi-continuous supersolutions, adapting Barles' result \cite[Theorem 4.6]{barles1994solutions}. In addition, we prove that the optimal consumption and saving policies vary continuously with respect to perturbations in the interest rate $r$.

To approximate solutions to the HJB equation, we develop a semi-Lagrangian (SL) scheme tailored to Aiyagari–Bewley–Huggett-type models. These schemes exploit the dynamic programming principle, computing the value function by tracing characteristics backward in time. SL methods are well-established in the optimal control literature (see \cite{achdou2008homogenization, alla2015efficient, bardi1997optimal,briani2012approximation,calzola2023semi,carlini2024lagrange, Chen_2008,falcone2013semi,festa2018domain}) and have been widely adopted for Mean Field Games (e.g., \cite{ashrafyan2025fully,CALZOLA2024115769, carlini2014fully, MR3392626, chowdhury2023numerical}). In addition to approximating the value function, SL schemes can also be used to construct approximate feedback (closed-loop) optimal controls. Using the strong comparison principle for the continuous problem, we prove the convergence of the numerical scheme to the constrained viscosity solution of the HJB equation. This is achieved through the classical method of relaxed limits within the Barles–Souganidis framework.

For the infinite-horizon setting, we employ a policy iteration method (Howard's algorithm) combined with the SL scheme. Howard algorithm with SL schemes has been considered in \cite{alla2015efficient,festa2018domain}. This approach is particularly effective, as state constraints can be explicitly handled during the policy update step (cf. \cite[Eq. (3.2)]{achdou2022simulating}). To solve the FPK equation, we adopt a dual SL scheme in line with recent work in the MFG literature (e.g., \cite{ashrafyan2025fully, carlini2014fully, chowdhury2023numerical}). In practice, this allows the use of a ``matrix transposition" trick, analogous to that used in finite difference schemes. Even though the convergence analysis is not addressed in detail for the FPK equation, the scheme is consistent with the weak solution formulation. Finally, we present the numerical algorithms in a fully discrete vectorized form to illustrate the implementation.\par

We also consider, only at a numerical level,  the evolutive  MFG  system describing the transition dynamics in the Aiyagari model:
\begin{equation}\label{Dynamic MFG}
	\left\{\begin{aligned}
		&(i)&&\rho v_j(t,x) = \frac{\pd v}{\pd t}+\sup_{c\ge 0} \left\{ u(c) +(r(t)x + y_j - c)Dv_j(t,x) \right\}\\
		&&&\quad +\lambda_j(v_{\bar \jmath}(t,x)-v_j(t,x)),\quad v_j(T,x)=v^{{st}}_j(x),\\
		&&&c_{j}^*(t,x)=\mathop{\arg \max}\limits_{c\ge 0} \left\{ u(c) +(r(t) x + y_j - c)Dv_j(t,x) \right\},\\
		&(ii)&&\frac{\pd g}{\pd t}+ \frac{\partial}{\partial x} \left[ \left(r(t) x + y_j - c_{j}^*(t,x)\right) g_j(t,x) \right]=\lambda_{\bar \jmath}g_{\bar \jmath}(t,x) -\lambda_jg_j(t,x),\\
		&&& g_j(x,0)=\mathsf{g}_j(x),\,\sum_j\int_{x>\ux}\mathsf{g}_j(x)dx+\mu_j(0)=1,
	\end{aligned}\right.
\end{equation}
coupled with the condition
	\begin{equation}\label{Dynamic Aiyagari}
		(iii_A)\qquad r(t) = A(t)\alpha \left( \frac{K[m(t)]}{N[m(t)]} \right)^{\alpha - 1} - \delta.
	\end{equation}	
	Here, the system has as initial condition $m_j(0)$ which is the sum of  an absolutely continuous part with density $\mathsf{g}_j(x)$ and possibly a Dirac mass at $x=\ux$ with weights $\mu_j(0)$ while $K[m(t)]$ and $N[m(t)]$ are defined as in \eqref{def K N}. The system \eqref{Dynamic MFG}–\eqref{Dynamic Aiyagari} has a typical backward-forward MFG structure.
	We assume that the terminal condition $v^{st}(x) = (v_1^{st}(x), v_2^{st}(x))$ is given by the solution of the stationary Aiyagari model \eqref{MFG}–\eqref{Aiyagari}. \par


	The paper is organized as follows. In Section 2, we establish the strong comparison principle for the HJB equation with some fixed $r$, and some properties of solution to the HJB equation. In Section 3, we give the SL scheme for the HJB equation and dual SL scheme for the FPK equation. We prove the convergence of the scheme for the HJB equation. We then discuss the implementation of algorithms with SL schemes for the MFG systems. Finally, we present some numerical results. 
\section{Constrained viscosity solution: theoretical aspects}
In this section, we discuss some theoretical aspects of HJB equation. We first make some standing parameter assumptions throughout the paper:
\beq\label{standing assump}
\begin{aligned}
&(i)\quad\text{The discount}\,\rho: \quad \rho>0.\\
&(ii)\quad\text{The interest rate}\,r:\quad -\infty<r<\rho<+\infty.\\
&(iii)\quad\text{The income}\,y_j:\quad 0<y_1<y_2.\\
&(iv)\quad\text{Risk aversion}\,\gamma:\quad \gamma>1.\\
&(v)\quad\text{Total credit supply}\,B:\quad B\geq 0\quad \text{and}\quad B>\ux.\\
&(vi)\quad\text{State constraint}\,\ux:\quad \ux\leq 0\quad \text{and}\quad \rho \ux + y_j > 0.
\end{aligned}
\eeq
The utility function $u$ is of CRRA type:
$
u(c) = \frac{c^{1-\gamma}}{1-\gamma}.
$
The HJB equations can be rewritten as:
\begin{equation}\label{eq:HJB}
\rho v_j(x) = H(x, y_j, Dv_j) + \lambda_j (v_{\bar{\jmath}}(x) - v_j(x)),
\end{equation}
\begin{equation}\label{def H}
	H(x, y_j, p) = \sup_{c \ge 0} \left\{ u(c) + (r x + y_j - c)p \right\} =
\begin{cases}
	(rx + y_j)p + \dfrac{\gamma}{1 - \gamma} p^{1 - \frac{1}{\gamma}}, & \text{if } p \ge 0, \\
	+\infty, & \text{if } p < 0.
\end{cases}
\end{equation}
The Hamiltonian takes finite values only if $p\geq 0$. From an economic point of the view, the model only makes sense if the marginal value of wealth remains non-negative. The unboundedness of $H$ does not pose a problem for viscosity solution theory, as explained in \cite[p. 87]{barles1994solutions}.
\begin{remark}
We now give the rationale for the standing assumptions \eqref{standing assump}. The concavity of $u(c)$ requires $\gamma>0$. In this paper we focus on the case when risk aversion $\gamma>1$ as the solution is bounded. The cases $0<\gamma<1$ or the log-utility can be dealt with similarly but with additional technicalities. Although the interest rate $r$ is an equilibrium object, i.e.  not known a priori—we can restrict to the case $\rho > r$. This  is justified since from \cite[Proposition 4, p. 66]{achdou2022income},
$
\lim_{\substack{r\to \rho\\ r<\rho}}  K[m] =+\infty,\quad \lim_{r\to -\infty} K[m] =\ux.
$
Therefore the only possible equilibrium $r$ for the Huggett model is such that $r<\rho$ and bounded below. For the Aiyagari model, $(iii_A)$ \eqref{Aiyagari} implicitly assumed $K[m]> 0$ and $-\delta\leq r<\rho$. From $\ux\leq 0$, $r<\rho$ and $\rho \ux + y_j > 0$, it follows that $r\ux + y_j > 0$ and the admissible set of controls is non-empty at $\ux$.
\end{remark}
We observe that $H(x, y_j , p)$ is strictly convex in $p$ for fixed $(x, y_j)$ and coercive w.r.t. $p$ :
\begin{equation}\label{coerc}
	\lim_{p\to +\infty}H(x, y_j, p) =
\begin{cases}
	+\infty, & \text{if } rx+y_j> 0, \\
	-\infty, & \text{if } rx+y_j\leq  0.
\end{cases}	
\end{equation}
\begin{lemma}\label{lemma:H}
Let $rx+y_j>0$, then
\begin{equation}\label{H min}
	\min_{p > 0} H(x, y_j , p) = \frac{(rx + y_j )^{1 - \gamma}}{1 - \gamma}, \qquad \mathop{\arg\min}\limits_{p > 0} H(x, y_j , p) = (rx + y_j )^{-\gamma}.
\end{equation}
\end{lemma}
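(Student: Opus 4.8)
The plan is to treat \eqref{H min} as a one–variable minimization. Fix $(x,y_j)$ with $a:=rx+y_j>0$ and work with the finite branch of \eqref{def H}, i.e. $\varphi(p):=H(x,y_j,p)=a\,p+\frac{\gamma}{1-\gamma}\,p^{\,1-\frac1\gamma}$ on $p\ge 0$. Here $\varphi(0)=0$, $\varphi$ is continuous on $[0,\infty)$ and, as noted just before the statement, strictly convex; since $a>0$, \eqref{coerc} gives $\varphi(p)\to+\infty$ as $p\to+\infty$, so $\varphi$ has a unique minimizer over $[0,\infty)$. To locate it I would differentiate,
\[
\varphi'(p)=a+\frac{\gamma}{1-\gamma}\Bigl(1-\tfrac1\gamma\Bigr)p^{-\frac1\gamma}=a-p^{-\frac1\gamma},
\]
using $\frac{\gamma}{1-\gamma}\bigl(1-\frac1\gamma\bigr)=\frac{\gamma}{1-\gamma}\cdot\frac{\gamma-1}{\gamma}=-1$. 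Since $\varphi'(p)\to-\infty$ as $p\to0^+$, the minimum is not attained at the endpoint $p=0$; solving $\varphi'(p)=0$ gives $p^{-1/\gamma}=a$, hence the unique interior critical point $p^\ast=a^{-\gamma}=(rx+y_j)^{-\gamma}>0$, which by strict convexity is the global minimizer on $(0,\infty)$ — this is the claimed minimizer in \eqref{H min}. Evaluating, and using $(a^{-\gamma})^{1-\frac1\gamma}=a^{-\gamma+1}=a^{1-\gamma}$ together with $1+\frac{\gamma}{1-\gamma}=\frac1{1-\gamma}$,
\[
\varphi(p^\ast)=a\cdot a^{-\gamma}+\frac{\gamma}{1-\gamma}\,a^{1-\gamma}=a^{1-\gamma}\Bigl(1+\frac{\gamma}{1-\gamma}\Bigr)=\frac{a^{1-\gamma}}{1-\gamma}=\frac{(rx+y_j)^{1-\gamma}}{1-\gamma},
\]
the asserted minimum value.

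Alternatively — and this is the cleaner route, since it ties the computation to the economics — one can argue by duality directly from the definition of $H$. For every admissible $c\ge 0$, $H(x,y_j,p)\ge u(c)+(a-c)p$; choosing $c=a>0$ (admissible because $a=rx+y_j>0$) kills the linear term and gives $H(x,y_j,p)\ge u(a)=\frac{a^{1-\gamma}}{1-\gamma}$ for \emph{all} $p\ge 0$, so $\inf_{p>0}H(x,y_j,p)\ge\frac{(rx+y_j)^{1-\gamma}}{1-\gamma}$. For the reverse bound, recall the supremum defining $H(x,y_j,p)$ for $p>0$ is attained at $\hat c(p)=p^{-1/\gamma}$ (the first–order condition $u'(c)=p$, i.e. $c^{-\gamma}=p$, with $u$ strictly concave); at $p^\ast=a^{-\gamma}$ this gives $\hat c(p^\ast)=a$, whence $H(x,y_j,p^\ast)=u(a)+(a-a)p^\ast=u(a)$. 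Thus the lower bound is achieved exactly at $p^\ast=(rx+y_j)^{-\gamma}$, which simultaneously yields both claims in \eqref{H min}.

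There is essentially no serious obstacle here; the only steps needing a line of care are (i) checking the minimum is interior to $(0,\infty)$ rather than at $p=0$ (handled by $\varphi'(0^+)=-\infty$, equivalently by the sign of $\frac{\gamma}{1-\gamma}$ when $\gamma>1$), (ii) invoking the already-established strict convexity of $H$ in $p$ to get uniqueness of the minimizer, and (iii) the elementary exponent bookkeeping $(a^{-\gamma})^{1-1/\gamma}=a^{1-\gamma}$ and $1+\frac{\gamma}{1-\gamma}=\frac1{1-\gamma}$. I would present the duality argument as the proof and relegate the explicit differentiation to a parenthetical remark.
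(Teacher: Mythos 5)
Your proof is correct. The paper states \cref{lemma:H} without proof (it is treated as an elementary computation following directly from \eqref{def H} and the convexity/coercivity remarks), so there is no argument of the paper's to compare against; your write-up simply supplies the missing verification. Both of your routes are sound: the calculus route correctly uses $\frac{\gamma}{1-\gamma}\bigl(1-\frac{1}{\gamma}\bigr)=-1$ to get $\varphi'(p)=a-p^{-1/\gamma}$, the interior critical point $p^\ast=a^{-\gamma}$, and the value $\frac{a^{1-\gamma}}{1-\gamma}$; the duality route is arguably the more illuminating one here, since the chain $H(x,y_j,p)\ge u(a)$ with equality exactly when the optimal consumption $\hat c(p)=p^{-1/\gamma}$ equals the drift $a=rx+y_j$ also delivers uniqueness of the minimizer without separately invoking strict convexity (equality forces the unique maximizing $c$ to be $a$, i.e.\ $p=a^{-\gamma}$). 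Either version would serve as the proof the paper omits.
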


We recall the definition of constrained viscosity solution for the HJB system \eqref{eq:HJB}.
\begin{definition}\hfill\label[definition]{def vis sol}
	\begin{itemize}
		\item[1.] An upper semicontinuous (u.s.c.) function $v=(v_1,v_2)$ is said to be a viscosity subsolution of \eqref{eq:HJB}, if whenever $\varphi$ is smooth function, $j=1,2$ and $v_j-\varphi$ has a local maximum at $x_0$, then
	\[\rho v_j(x_0)\le H(x_0,y_j,D\phi(x_0) )+\lambda_j (v_{\bar \jmath}(x_0)-v_j(x_0)).\]	
		\item[2.] A  lower semicontinuous (l.s.c.) function $v=(v_1,v_2)$ is said to be a viscosity supersolution of \eqref{eq:HJB}, if whenever $\varphi$ is smooth function, $j=1,2$ and $v_j-\varphi$ has a local  minimum at $x_0$, then
	\[\rho v_j(x_0)\ge H(x_0,y_j,D\varphi(x_0) )+\lambda_j (v_{\bar \jmath}(x_0)-v_j(x_0)).\]		
	\end{itemize}
A continuous function 	$v$ is said to be a constrained viscosity solution to  \eqref{eq:HJB} if $v$ is a viscosity supersolution in $(\ux,\infty)$ and a viscosity subsolution in $[\ux,\infty)$.
\end{definition}
Now, we establish the strong comparison principle for system \eqref{eq:HJB}, stating that a u.s.c. sub solution is lower than a l.s.c. supersolution. The proof follows \cite[7.1.2 proof of Theorem 4.6]{barles1994solutions}. 
\begin{theorem}\label{comparison}
Assume that $\mathsf{u}=(\mathsf{u}_1,\mathsf{u}_2)$ and $\mathsf{v}=(\mathsf{v}_1,\mathsf{v}_2)$ are bounded viscosity sub and supersolution of system \eqref{eq:HJB}.We extend $\mathsf{v}_j$ at $\ux$ as 
\beq\label{liminf super}
\mathsf{v}_j(\ux)=\liminf_{\substack{z\rightarrow \ux\\ z>\ux}} \mathsf{v}_j(z) .
\eeq
Then $\mathsf{u}\leq \mathsf{v}$ in $[\ux,+\infty)$, i.e. $\mathsf{u}_j\le \mathsf{v}_j$ in $[\ux,+\infty)$ for $j=1,2$.
\end{theorem}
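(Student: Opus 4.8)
The plan is to follow the classical doubling-of-variables argument for viscosity solutions, adapted to the weakly coupled system and to the peculiarities of our Hamiltonian (finiteness only for $p\ge 0$, and the state constraint at $\ux$). Suppose for contradiction that $M:=\sup_{j}\sup_{x\ge\ux}(\mathsf{u}_j(x)-\mathsf{v}_j(x))>0$. Since $H$ is the Legendre transform of a control problem on a half-line with unbounded state space, the first step is to reduce to a bounded domain: I would use a suitable comparison function to show the supremum is attained (or nearly attained) on a compact set. The natural device here, exactly as in Barles, is to perturb the subsolution by $\mathsf{u}_j - \eta\,\chi(x)$ for a smooth $\chi$ growing at infinity, chosen so that $\mathsf{u}_j-\eta\chi$ remains a (strict) subsolution of a slightly modified equation while forcing the maximum of $\mathsf{u}_j-\eta\chi-\mathsf{v}_j$ to stay in a fixed interval $[\ux,R_\eta]$; one then lets $\eta\to 0$ at the end. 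The sign condition $r<\rho$ together with $\gamma>1$ (so $\mathsf{u},\mathsf{v}$ bounded) is what makes such a $\chi$ available.

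Next comes the doubling of variables. For $\eps>0$ consider
\[
\Phi_\eps^{(j)}(x,z)=\mathsf{u}_j(x)-\mathsf{v}_j(z)-\frac{|x-z|^2}{2\eps},
\]
maximized over $(x,z)\in[\ux,R]\times[\ux,R]$, and let $j_\eps$ be the index achieving the largest such maximum, attained at $(x_\eps,z_\eps)$. Standard arguments give $|x_\eps-z_\eps|^2/\eps\to 0$ and $\Phi\to M$ along a subsequence, with $x_\eps,z_\eps\to\hat x$. Here the state constraint enters: because $\mathsf{u}$ is a subsolution only on the closed set $[\ux,\infty)$ while $\mathsf{v}$ is a supersolution only on the open set $(\ux,\infty)$, I must ensure the supersolution test point $z_\eps$ stays strictly inside. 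If $\hat x>\ux$ this is automatic for small $\eps$. If $\hat x=\ux$, one uses the boundary extension \eqref{liminf super} and a further small perturbation — the classical trick is to test $\mathsf{v}$ against $z\mapsto \mathsf{u}_j(x_\eps) - |x_\eps-z|^2/(2\eps) + $ a term penalising $z$ near $\ux$, so that the minimum of the test is interior; this is precisely the subtle point in Barles' Theorem~4.6 and is where I expect the main work.

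Applying the sub/supersolution inequalities at $(x_\eps,z_\eps)$ with $p_\eps:=(x_\eps-z_\eps)/\eps$ as the common "gradient", and subtracting, the coupling terms combine as $\lambda_{j_\eps}\big[(\mathsf{u}_{\bar\jmath}(x_\eps)-\mathsf{v}_{\bar\jmath}(z_\eps))-(\mathsf{u}_{j_\eps}(x_\eps)-\mathsf{v}_{j_\eps}(z_\eps))\big]$, which is $\le 0$ by maximality of $j_\eps$ — this is exactly why one compares the whole system simultaneously rather than one equation at a time. One is left with
\[
\rho\big(\mathsf{u}_{j_\eps}(x_\eps)-\mathsf{v}_{j_\eps}(z_\eps)\big)\le H(x_\eps,y_{j_\eps},p_\eps)-H(z_\eps,y_{j_\eps},p_\eps).
\]
Since $\mathsf{u}$ is a subsolution and the Hamiltonian is $+\infty$ for negative slopes, the relevant test forces $p_\eps\ge 0$, so both sides use the finite branch of $H$. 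Because $\hat x$ can be $\ux$, coercivity \eqref{coerc} and Lemma~\ref{lemma:H} are used to bound $p_\eps$: when $r\hat x+y_{j_\eps}>0$ this follows from $\mathsf{u}$ being bounded above, and assumption \eqref{standing assump}(vi) guarantees $r\ux+y_j>0$ so this case is the operative one. With $p_\eps$ bounded, the Hamiltonian is Lipschitz in $x$ uniformly in $p$ on the relevant range, giving $|H(x_\eps,y_{j_\eps},p_\eps)-H(z_\eps,y_{j_\eps},p_\eps)|\le C|x_\eps-z_\eps|(1+|p_\eps|)\to 0$. Hence $\rho M\le 0$, contradicting $\rho>0$ and $M>0$. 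Finally, undoing the $\eta$-perturbation ($\eta\to 0$) gives $\mathsf{u}_j\le\mathsf{v}_j$ on $[\ux,\infty)$. The main obstacle, to repeat, is the boundary analysis at $x=\ux$: keeping the supersolution test point interior while retaining enough compactness and a usable gradient bound, which is the heart of the constrained-viscosity-solution comparison and the reason the proof mirrors \cite[Section 7.1.2]{barles1994solutions} so closely.
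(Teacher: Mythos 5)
Your overall architecture is the same as the paper's: argue by contradiction, double variables, use the maximizing index $j$ so the coupling terms have the right sign, force the supersolution test point into the interior, and use coercivity of $H$ to bound the gradients. However, there is a genuine gap at exactly the point you flag as ``where I expect the main work'': you never construct the boundary penalization, and the inequality you then write down is inconsistent with its presence. The paper's proof hinges on the explicit test function
\[
\psi_k(j,x,z)=\mathsf{u}_j(x)-\mathsf{v}_j(z)-\frac{|x-z|^2}{\e_k}-\Bigl[\Bigl(\tfrac{z-x}{\e_k}-1\Bigr)_{-}\Bigr]^2-|z-\ux|^2,
\]
where $\e_k=|\zeta_k-\ux|$ for a sequence $\zeta_k$ realizing the $\liminf$ in \eqref{liminf super}. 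The middle term forces $z_k-x_k\geq \e_k-\e_k o(1)$, hence $z_k>\ux$, which is what legitimizes testing the supersolution. The price is that the two Hamiltonians are evaluated at \emph{different} gradients: the subsolution at $\tfrac{2(x_k-z_k)}{\e_k}+\Lambda_k$ and the supersolution at $\tfrac{2(x_k-z_k)}{\e_k}+\Lambda_k-2(z_k-\ux)$, with $\Lambda_k$ coming from the penalization. Your step ``$\rho(\mathsf{u}_{j_\eps}(x_\eps)-\mathsf{v}_{j_\eps}(z_\eps))\le H(x_\eps,y_{j_\eps},p_\eps)-H(z_\eps,y_{j_\eps},p_\eps)$ with a common $p_\eps$, then Lipschitz in $x$'' only works if no penalization is present, i.e.\ precisely in the interior case; at $\ux$ it does not apply. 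The paper closes this mismatch by exploiting the specific structure of $H$: the term $\tfrac{\gamma}{1-\gamma}p^{1-1/\gamma}$ is decreasing in $p\ge 0$ (for $\gamma>1$), so the difference of the nonlinear parts has a sign, and the remaining cross term is controlled by proving $|x_k-z_k|\,\Lambda_k\to 0$ via the coercivity bound on the supersolution gradient. None of this is in your sketch, and it cannot be replaced by a generic ``$H$ is Lipschitz in $x$ uniformly in $p$'' estimate because $\Lambda_k$ is of order $\e_k^{-1}$ a priori. (Your preliminary reduction to a compact set via $\mathsf{u}_j-\eta\chi$ is a reasonable alternative to the paper's appeal to Barles' Theorems 2.4 and 4.2 for the non-attained-supremum case; that part is not the issue.)
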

\begin{proof}

We assume by contradiction that 
\beq\label{Eq:contra}
\max_j\sup_x(\mathsf{u}_j(x)-\mathsf{v}_j(x))=\delta>0.
\eeq
First we consider the case when the $\sup$ in \cref{Eq:contra} is achieved at $\ux$, such that
\beq\label{Eq:contra-bd}
\max_j\sup_x(\mathsf{u}_j(x)-\mathsf{v}_j(x))=\max_j(\mathsf{u}_j(\ux)-\mathsf{v}_j(\ux))=\delta.
\eeq
From \cref{liminf super}, there  exists a sequence $\zeta_k\to \ux$, such that $\mathsf{v}_j(\zeta_k)\to \mathsf{v}_j(\ux)$. We denote $\e_k=\vert \zeta_k-\ux\vert$. Consider the   function
\beq
\psi_k(j,x,z)=\mathsf{u}_j(x)-\mathsf{v}_j(z)-\frac{\vert x-z\vert^2}{\e_k}-\left[\left(\frac{z-x}{\e_k}-1\right)_{-}\right]^2-\vert z-\ux\vert^2.
\eeq
Let $\psi_k$ attain its maximum at $(j_{k}, x_{k},z_{k})$. From $\psi_k(j_{k},x_{k},z_{k})\geq \psi_k(j_{\bar{\jmath}_{k}},x_{k},z_{k})$, 
$$
\mathsf{u}_{j_{k}}(x_{k})-\mathsf{v}_{j_{k}}(z_{k})\geq \mathsf{u}_{\bar{\jmath}_{k}}(x_{k})-\mathsf{v}_{\bar{\jmath}_{k}}(z_{k}),
$$
we can then obtain
\beq\label{v1 v2}
\mathsf{u}_{\bar{\jmath}_{k}}(x_{k})-\mathsf{u}_{j_{k}}(x_{k})\leq \mathsf{v}_{\bar{\jmath}_{k}}(z_{k})-\mathsf{v}_{j_{k}}(z_{k}).
\eeq
We now show 
\beq\label{psi_k}
\psi_k(j_{k},x_{k},z_{k})\geq \delta-o(1)>0. 
\eeq

From
$
\left[\left(\frac{\zeta_k-\ux}{\e_k}-1\right)_{-}\right]^2=0,
$
We have 
$$
\psi_k(j_{k}, x_{k},z_{k})\geq \max_j\psi_k(j,\ux,\zeta_k) =\max_j(\mathsf{u}_j(\ux)-\mathsf{v}_j(\zeta_k))-\vert \ux-\zeta_k\vert-\vert \zeta_k-\ux\vert^2.
$$
From $\zeta_k\to \ux$ and $\mathsf{v}_j(\zeta_k)\to \mathsf{v}_j(\ux)$ we obtain
$
\max_j\psi_k(j,\ux,\zeta_k) =\delta-o(1)
$
and therefore \cref{psi_k}. With $\psi_k(j_{k}, x_{k},z_{k})> 0$ and boundedness of $\mathsf{u}_{j_{k}}(x_{k})$, $\mathsf{v}_{j_{k}}(z_{k})$ there exists a constant $C>0$ such that 
$
\frac{\vert x_{k}-z_{k}\vert^2}{\e_k}<C,
$
hence $x_{k}-z_{k}\to 0$ as $\e_k\to 0$. From \cref{psi_k} we can obtain 
$$
\liminf_{k\to +\infty}(\mathsf{u}_{j_{k}}(x_{k})-\mathsf{v}_{j_{k}}(z_{k}))\geq \liminf_{k\to +\infty}\psi_k(j_{k},x_{k},z_{k})\geq \delta. 
$$
In the meantime, since $\mathsf{u}_{j_{k}}(x_{k})-\mathsf{v}_{j_{k}}(z_{k})$ is u.s.c., we have 
$$
\limsup_{k\to +\infty}(\mathsf{u}_{j_{k}}(x_{k})-\mathsf{v}_{j_{k}}(z_{k}))\leq \max_j(\mathsf{u}_j(\ux)-\mathsf{v}_j(\ux))=\delta,
$$
hence
$
\lim_{k\to +\infty}(\mathsf{u}_{j_{k}}(x_{k})-\mathsf{v}_{j_{k}}(z_{k}))=\delta. 
$
We then obtain 
\begin{equation*}
\frac{\vert x_{k}-z_{k}\vert^2}{\e_k}+
\left[\left(\frac{z_{k}-x_{k}}{\e_k}-1\right)_{-}\right]^2+
\vert z_{k}-\ux\vert^2\to 0,\qquad \text{as}\quad \e_k\to 0.
\end{equation*}
This gives 
$
z_{k}-x_{k}\geq \e_k-\e_ko(1),
$
which implies $z_{k}>\ux$, hence we can use the supersolution property at $z_{k}$. We denote
\begin{align*}
&\phi(x,z,\e)=\frac{\vert x_{\e }-z_{k}\vert^2}{\e }+
\left[\left(\frac{z_{\e }-x_{\e }}{\e }-1\right)_{-}\right]^2+
\vert z_{\e }-\ux\vert^2\\
&\Lambda_k=\frac{2}{\e_k}\left(\frac{z_{k}-\ux}{\e_k}-1\right)_{-}.
\end{align*}
Since $\psi_k(j_{k},x,z)$ attain it maximum at $(x_{k},z_{k})$, we have
\begin{align*}
\rho \mathsf{u}_{j_{k}}(x_{k}) \leq &H\left(x_{k},y_{j_{k}},\frac{2( x_{k}-z_{k} )}{\e_k}+\Lambda_k \right)+\lambda_{j_{k}}\left(\mathsf{u}_{\bar{\jmath}_{k}}-\mathsf{u}_{j_{k}}\right),\\
\rho \mathsf{v}_{j_{k}}(z_{k}) \geq & H\left(z_{k},y_{j_{k}},\frac{2( x_{k}-z_{k} )}{\e_k}+\Lambda_k-2(z_{k}-\ux) \right)+\lambda_{j_{k}}\left(\mathsf{v}_{\bar{\jmath}_{k}}-\mathsf{v}_{j_{k}}\right),
\end{align*}
with $\frac{2( x_{k}-z_{k} )}{\e_k}+\Lambda_k-2(z_{k}-\ux)\geq 0$. \par
Subtracting the two inequalities and using \cref{v1 v2}, we have 
\beq
\begin{aligned}
&\rho (\mathsf{u}_{j_{k}}(x_{k})-\mathsf{v}_{j_{k}}(z_{k}))\\
\leq {}&H\left(x_{k},y_{j_{k}},\frac{2( x_{k}-z_{k} )}{\e_k}+\Lambda_k \right)-H\left(z_{k},y_{j_{k}},\frac{2( x_{k}-z_{k} )}{\e_k}+\Lambda_k-2(z_{k}-\ux) \right)\\
\leq {}& r(x_{k}-z_{k})\left(\frac{2( x_{k}-z_{k} )}{\e_k}+\Lambda_k\right)+2(rz_{k}+y_{j_{k}})(z_{k}-\ux)\\
&+\underbrace{\frac \gamma {1-\gamma}\left(\frac{2( x_{k}-z_{k} )}{\e_k}+\Lambda_k \right)^{1-\frac 1 \gamma}
-\frac \gamma {1-\gamma}\left(\frac{2( x_{k}-z_{k} )}{\e_k}+\Lambda_k-2(z_{k}-\ux) \right)^{1-\frac 1 \gamma}}_{<0}\\
\leq {}& r(x_{k}-z_{k})\left(\frac{2( x_{k}-z_{k} )}{\e_k}+\Lambda_k\right)+2(rz_{k}+y_{j_{k}})(z_k-\ux).
\end{aligned}
\eeq
For the last inequality we used the fact that $\frac \gamma {1-\gamma}p^{1-\frac 1 \gamma}$ is a decreasing function when $p\geq 0$ and $z_k>\ux$. To obtain a contradiction by sending $\e_k \to 0$, the crucial point is to show $\vert x_{k}-z_{k}\vert \Lambda_k\to 0$. For this we refer to the coercivity of $H$. Suppose $rx+y_j>0$ and for the supersolution property to hold, there exists a $C>0$ such that 
$$
\frac{2( x_{k}-z_{k} )}{\e_k}+\Lambda_k-2(z_{k}-\ux) \leq C,
\quad
\Lambda_k \leq C+\frac{2( z_{k}-x_{k} )}{\e_k}+2(z_{k}-\ux).
$$
$\vert x_{k}-z_{k}\vert \Lambda_k\to 0$ clearly follows from $\frac{\vert x_{k}-z_{k}\vert^2}{\e_k}\to 0$ and $ z_{k}-\ux \to 0$. If $rx+y_j\leq 0$ We can argue similarly with the subsolution property and obtain $\vert x_{k}-z_{k}\vert \Lambda_k\to 0$.\par
We obtain $\mathsf{u}_{j_{k}}(x_{k})-\mathsf{v}_{j_{k}}(z_{k})\to 0$ as $\e_k\to 0$, which is a contradiction to \cref{psi_k}. Therefore we have shown that \cref{Eq:contra-bd} cannot hold. More generally to obtain a contradiction with \cref{Eq:contra} we can proceed similarly as \cite[proof of Theorem 2.4 and Theorem 4.2]{barles1994solutions} and the techniques developed above for the particular structure of the Hamiltonian.
\end{proof}
The result from \cref{comparison} apply directly to the decoupled system by taking $\lambda_j=0$:
\begin{equation}\label{eq:HJB1d}
\rho v_j(x) = H(x, y_j, Dv_j).
\end{equation}


\begin{proposition}\label{comparison1d}
The constant function 
\begin{equation}\label{sub sol}
	\widehat{\su}_j(x) = \frac{1}{\rho} \frac{(r\ux + y_j)^{1 - \gamma}}{1 - \gamma}
\end{equation}
is a subsolution  of \eqref{eq:HJB1d}. The constant $\widehat{\sv}_j=0$ is a supersolution to \cref{eq:HJB1d}. 
\end{proposition}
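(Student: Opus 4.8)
The plan is to verify both claims directly from the definition of constrained sub/supersolution (\cref{def vis sol}), exploiting that both candidates are \emph{constant}: a smooth $\varphi$ touching a constant from above (resp. below) at an interior point must have vanishing derivative there, and at the state-constraint endpoint $\ux$ it can only have nonnegative (resp. nonpositive) derivative. The only ingredients I will need are that the decoupled equation \eqref{eq:HJB1d} carries no zero-order term, that $H(x,y_j,0)=0$ (because $\gamma>1$ makes $0^{1-1/\gamma}=0$, equivalently $\sup_{c\ge0}u(c)=0$), and that $r\ux+y_j>0$. I would first record this last fact: from $\ux\le0$ and $r<\rho$ in \eqref{standing assump} one gets $r\ux\ge\rho\ux$, hence $r\ux+y_j\ge\rho\ux+y_j>0$; in particular $\widehat{\su}_j$ is well defined and, since $1-\gamma<0$, $\widehat{\su}_j=\frac1\rho\frac{(r\ux+y_j)^{1-\gamma}}{1-\gamma}<0$, so $\rho\widehat{\su}_j<0$.

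For the subsolution property of $\widehat{\su}_j$ on $[\ux,+\infty)$, let $\varphi$ be smooth with $\widehat{\su}_j-\varphi$ having a local maximum at $x_0\in[\ux,+\infty)$, so that $\varphi$ has a (one-sided, if $x_0=\ux$) local minimum at $x_0$. If $x_0>\ux$, then $D\varphi(x_0)=0$ and $H(x_0,y_j,D\varphi(x_0))=H(x_0,y_j,0)=0>\rho\widehat{\su}_j=\rho\widehat{\su}_j(x_0)$. If $x_0=\ux$, then $D\varphi(\ux)\ge0$; by \cref{lemma:H} (applicable since $r\ux+y_j>0$) one has $H(\ux,y_j,p)\ge\min_{p>0}H(\ux,y_j,p)=\frac{(r\ux+y_j)^{1-\gamma}}{1-\gamma}=\rho\widehat{\su}_j(\ux)$ for $p>0$, while $H(\ux,y_j,0)=0>\rho\widehat{\su}_j(\ux)$, so $H(\ux,y_j,D\varphi(\ux))\ge\rho\widehat{\su}_j(\ux)$ in either case. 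Thus $\rho\widehat{\su}_j(x_0)\le H(x_0,y_j,D\varphi(x_0))$ always, which is the subsolution inequality.

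For the supersolution property of $\widehat{\sv}_j\equiv0$ it suffices, by the definition of constrained solution, to argue on the open set $(\ux,+\infty)$, so no boundary point enters. If $\varphi$ is smooth and $\widehat{\sv}_j-\varphi=-\varphi$ has a local minimum at an interior $x_0$, then $\varphi$ has an interior local maximum there, whence $D\varphi(x_0)=0$ and $H(x_0,y_j,D\varphi(x_0))=H(x_0,y_j,0)=0=\rho\cdot0=\rho\widehat{\sv}_j(x_0)$, giving the required inequality $\rho\widehat{\sv}_j(x_0)\ge H(x_0,y_j,D\varphi(x_0))$ (in fact with equality). There is no genuine obstacle in this proposition; the only points to watch are the identity $H(\cdot,\cdot,0)=0$ (which relies on $\gamma>1$), the sign $\widehat{\su}_j<0$ extracted from \eqref{standing assump}, and the one-sided test at $x_0=\ux$ for the subsolution, which is precisely where the minimum/coercivity of $H$ from \cref{lemma:H} is invoked.
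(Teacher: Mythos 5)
Your proof is correct and follows essentially the same route as the paper's: direct verification from \cref{def vis sol}, using that a constant forces the test function's derivative to vanish at interior extrema, the one-sided condition $D\varphi(\ux)\ge 0$ at the constrained endpoint, and \cref{lemma:H} to bound $H(\ux,y_j,D\varphi(\ux))$ from below by $\rho\widehat{\su}_j(\ux)$. Your version is slightly more explicit (justifying $r\ux+y_j>0$ from \eqref{standing assump} and separating the cases $D\varphi(\ux)=0$ and $D\varphi(\ux)>0$), but there is no substantive difference.
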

\begin{proof}
The supersolution $0$ follows from $H(x,y_j,p)=0$ if $p=0$. For $x>\ux$,

$
\rho \widehat{\sv}_j(x)=H(x,y_j,D\widehat{\sv}_j)=0,
$
hence $\widehat{\sv}_j$ is a supersolution. From
$$
\rho \widehat{\su}_j(x)<0=H(x,y_j,D\widehat{\su}_j(x)),\quad x>\ux,
$$
$\widehat{\su}_j$ is a subsolution for $x>\ux$. We now check that $\widehat{\su}_j$ is a subsolution at $x=\ux$. For any $\varphi \in C^1[\ux,+\infty)$, $\widehat{\su}_j-\varphi$ has a local maximum at $\ux$ iff $D\varphi(\ux)\geq D\widehat{\su}_j(\ux)$. From \cref{H min},  we still have $H(\ux,y_j,D\varphi(\ux))\geq \frac{(r\ux + y_j )^{1 - \gamma}}{1 - \gamma}$, hence $\rho \widehat{\su}_1(\ux)\leq H(\ux,y_1,D\varphi(\ux))$. 
\end{proof}

We now construct explicit upper and lower barrier functions for the problem \eqref{eq:HJB}.
\begin{proposition}\label{barrier}
The constant function $(\widehat{\su}_1, \widehat{\su}_1)$ is a subsolution  of \cref{eq:HJB}. The constant $(0,0)$ is a supersolution of \cref{eq:HJB}. 
\end{proposition}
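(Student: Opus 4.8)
The plan is to reduce the statement for the coupled system \eqref{eq:HJB} to the decoupled case already treated in \cref{comparison1d}. The crucial observation is that both barriers proposed here have \emph{equal} components, so the coupling term $\lambda_j(v_{\bar{\jmath}}(x)-v_j(x))$ vanishes identically; consequently, the sub/supersolution inequalities for \eqref{eq:HJB} collapse to the requirement that each scalar constant be a sub/supersolution of \cref{eq:HJB1d} for \emph{both} indices $j=1,2$. (Note that the more naive choice $(\widehat{\su}_1,\widehat{\su}_2)$ would not work as a subsolution, because for $j=2$ the coupling contribution $\lambda_2(\widehat{\su}_1-\widehat{\su}_2)$ has the wrong sign; using $(\widehat{\su}_1,\widehat{\su}_1)$ is precisely what removes this difficulty.)

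For the subsolution $(\widehat{\su}_1,\widehat{\su}_1)$: given a smooth $\varphi$ and a point $x_0\in[\ux,\infty)$ at which $\widehat{\su}_1-\varphi$ has a local maximum, one must check $\rho\widehat{\su}_1\le H(x_0,y_j,D\varphi(x_0))$ for $j=1$ and $j=2$. For $j=1$ this is exactly \cref{comparison1d}. For $j=2$ I would first record the ordering $\widehat{\su}_1\le\widehat{\su}_2$, which follows from $0<r\ux+y_1<r\ux+y_2$ (using \eqref{standing assump}) together with the monotonicity of $t\mapsto t^{1-\gamma}/(1-\gamma)$ on $(0,\infty)$. Then, at an interior maximum $x_0>\ux$ one has $D\varphi(x_0)=0$, so $H(x_0,y_2,0)=0>\rho\widehat{\su}_1$; and at $x_0=\ux$ a one-sided maximum forces $D\varphi(\ux)\ge 0$, whence \cref{H min} (applicable since $r\ux+y_2>0$) gives $H(\ux,y_2,D\varphi(\ux))\ge (r\ux+y_2)^{1-\gamma}/(1-\gamma)=\rho\widehat{\su}_2\ge\rho\widehat{\su}_1$.

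For the supersolution $(0,0)$ the argument is even shorter: the coupling term again vanishes, so one only needs the constant $0$ to be a supersolution of \cref{eq:HJB1d} for each $j$, which is the content of \cref{comparison1d} — at any minimum of $0-\varphi$ in $(\ux,\infty)$ one has $D\varphi(x_0)=0$ and $H(x_0,y_j,0)=0=\rho\cdot 0$. I do not foresee a substantive obstacle. The only two points requiring a line of care are the ordering $\widehat{\su}_1\le\widehat{\su}_2$ — needed so that the more negative constant still works as a subsolution of the $j=2$ equation — and the boundary test at $x_0=\ux$, where the trivial bound $H\ge 0$ available in the interior no longer holds and one must instead use the explicit value of $\min_{p>0}H$ from \cref{lemma:H}; both are immediate from results already established.
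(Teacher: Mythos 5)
Your proof is correct and follows essentially the same route as the paper, which omits the argument precisely because it is "very similar to the proof of \cref{comparison1d} and in addition using $y_2>y_1$": your reduction via the vanishing coupling term, the boundary check at $\ux$ through \cref{H min}, and the ordering $\widehat{\su}_1\le\widehat{\su}_2$ from $y_1<y_2$ are exactly the intended ingredients.
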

We omit the proof since it is very similar to the proof of \cref{comparison1d} and in addition using $y_2>y_1$. 
\begin{corollary}
	There exists at most one bounded viscosity solution $v=(v_1,v_2)$ to the system \eqref{eq:HJB}.  
\end{corollary}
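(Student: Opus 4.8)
The plan is to obtain uniqueness as an immediate consequence of the strong comparison principle \cref{comparison}. Suppose $v=(v_1,v_2)$ and $w=(w_1,w_2)$ are both bounded viscosity solutions of \eqref{eq:HJB} in the constrained sense of \cref{def vis sol}. First I would record that, by that definition, each of $v$ and $w$ is continuous on $[\ux,+\infty)$, is a viscosity subsolution on the closed half-line $[\ux,+\infty)$, and is a viscosity supersolution on the open half-line $(\ux,+\infty)$. In particular each is bounded, u.s.c. (being continuous) when regarded as a subsolution and l.s.c. when regarded as a supersolution, so both satisfy the structural hypotheses under which \cref{comparison} is stated.

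The one point that needs a word of care is the boundary extension \eqref{liminf super} appearing in \cref{comparison}: there the supersolution is required to be redefined at $\ux$ as the $\liminf$ of its interior values. Since $w$ is continuous up to $\ux$, one has $\liminf_{z\to\ux,\,z>\ux} w_j(z)=w_j(\ux)$, so this prescribed extension coincides with the actual value of $w_j$ at $\ux$ (and likewise for $v$). Hence I would apply \cref{comparison} with $\mathsf{u}=v$ and $\mathsf{v}=w$ to obtain $v_j\le w_j$ on $[\ux,+\infty)$ for $j=1,2$, then exchange the roles of $v$ and $w$ and apply it again to obtain $w_j\le v_j$ on $[\ux,+\infty)$. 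Combining the two inequalities yields $v_j=w_j$ on $[\ux,+\infty)$ for $j=1,2$, which is the asserted ``at most one'' statement.

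I do not expect a genuine obstacle in this step: the substance is entirely contained in \cref{comparison}, and the only thing to verify is the compatibility just described between a continuous constrained viscosity solution and the one-sided requirements (subsolution up to the boundary, supersolution in the interior, $\liminf$-extension at $\ux$) of that theorem. I would also emphasize that this argument delivers only uniqueness; existence is not addressed here and would be established separately, e.g.\ by Perron's method, using the explicit sub- and supersolutions furnished by \cref{comparison1d} and \cref{barrier} as barriers.
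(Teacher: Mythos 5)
Your argument is correct and is exactly the intended one: the paper gives no explicit proof of this corollary, treating it as an immediate consequence of \cref{comparison} applied twice with the roles of sub- and supersolution exchanged. Your additional check that the $\liminf$-extension \eqref{liminf super} is automatic for a continuous constrained solution is a sensible verification but does not change the route.
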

We do not address the existence of a solution to \eqref{eq:HJB} in detail here. However, combining the strong comparison principle (Theorem~\ref{comparison}) with the construction of barrier functions (Proposition~\ref{barrier}) allows one to establish existence via the Perron method. Alternatively, existence can also be obtained as the limit of the approximation scheme analyzed in the next section, see \cref{thm:convergence}.\par
We now consider some additional properties of the bounded viscosity solution to \eqref{eq:HJB}, which follows easily from the comparison principles and barrier properties. 


\begin{proposition}\label{solution property}
Let $v=(v_1,v_2)$ be the bounded viscosity solution to  \eqref{eq:HJB}. Then $v_j$ is Lipschitz continuous in $[\ux,+\infty)$. 
	
\end{proposition}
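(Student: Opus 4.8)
The strategy is to derive two one‑sided slope bounds for each $v_j$ and combine them. Write $M:=\sup_{k,x}|v_k(x)|<\infty$, finite by Proposition~\ref{barrier}, and recall that by~\eqref{standing assump} one has $r\ux+y_j\ge\rho\ux+y_j>0$. The lower bound is immediate: since $H(x,y_j,p)=+\infty$ for $p<0$, no $C^1$ function can touch $v_j$ from below at a point of $(\ux,+\infty)$ with negative slope, for otherwise the supersolution inequality would read $\rho v_j(x_0)\ge H(x_0,y_j,D\varphi(x_0))=+\infty$, contradicting boundedness. Hence every $p\in D^-v_j(x)$ is $\ge 0$ on $(\ux,+\infty)$, so $v_j$ is nondecreasing there and, by continuity, on $[\ux,+\infty)$. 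Thus $v_j(x)-v_j(z)\ge 0\ge -L(x-z)$ for $x\ge z$ and any $L\ge 0$, and it remains to prove $v_j(x)-v_j(z)\le L_0(x-z)$ for $\ux\le z\le x$ and some $L_0$ independent of $j$.

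For the upper bound I would bound the semidifferentials using the equation together with the coercivity dichotomy~\eqref{coerc}, and then use the elementary fact that, for a continuous $w$, the condition ``$D^-w(x)\le L$ whenever $D^-w(x)\ne\emptyset$'' already implies $w(x)-w(z)\le L(x-z)$ for $z\le x$. Where $rx+y_j>0$ one uses the supersolution inequality: for $p\in D^-v_j(x)$,
\[
(rx+y_j)p+\tfrac{\gamma}{1-\gamma}p^{1-1/\gamma}=H(x,y_j,p)\le \rho v_j(x)-\lambda_j\bigl(v_{\bjm}(x)-v_j(x)\bigr)\le 2\lambda_jM ,
\]
and since the left‑hand side tends to $+\infty$ as $p\to+\infty$ (uniformly as long as $rx+y_j$ stays bounded below), $p$ is bounded; when $r\ge 0$ the inequality $rx+y_j\ge r\ux+y_j>0$ makes this bound uniform on all of $[\ux,+\infty)$. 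Where $rx+y_j\le 0$ (a region occurring only if $r<0$) one uses instead the subsolution inequality, which now gives for $q\in D^+v_j(x)$ that $\tfrac{\gamma}{1-\gamma}q^{1-1/\gamma}\ge H(x,y_j,q)\ge \rho v_j(x)-\lambda_j(v_{\bjm}(x)-v_j(x))\ge-(\rho+2\lambda_j)M$, hence a uniform bound on $q$. Splitting $[\ux,+\infty)$ at $x_j^\ast:=-y_j/r$ and gluing the two estimates across $x_j^\ast$ (using continuity of $v_j$ and the bound on $D^+v_j(x_j^\ast)$ coming from $rx_j^\ast+y_j=0$) controls all slopes away from $\ux$; concretely this part is the doubling‑of‑variables argument of the proof of Theorem~\ref{comparison} run with $v_j$ against itself, the weak coupling being absorbed, exactly as there, by taking the maximum over $j$ so that the coupling term is nonpositive.

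The genuinely delicate point, and the one I expect to cost the most work, is the boundary $x=\ux$: there only the subsolution inequality holds and, because $r\ux+y_j>0$, it controls the upward slope only from below — this is precisely the difficulty handled in Barles' Theorem~4.6 by the penalization in the proof of Theorem~\ref{comparison} that keeps the supersolution point strictly inside. To fix the slope at $\ux$ I would use the barriers: by Lemma~\ref{lemma:H} and Proposition~\ref{comparison1d} one has $(r\ux+y_j)^{-\gamma}=\arg\min_{p>0}H(\ux,y_j,\cdot)$ and $\rho\widehat{\su}_j(\ux)=\min_{p>0}H(\ux,y_j,\cdot)$, so, using $\rho>r$, the affine function $\ell_j(x):=\widehat{\su}_j(\ux)+(r\ux+y_j)^{-\gamma}(x-\ux)$ is a local supersolution of the scalar equation~\eqref{eq:HJB1d} near $\ux$; comparing $v_j$ with $\ell_j$ near $\ux$ (again in the $\max_j$ formulation, so that the off‑diagonal terms cost only $O(\lambda_jM)$) bounds $v_j(x)-v_j(\ux)$ by $(r\ux+y_j)^{-\gamma}(x-\ux)+o(x-\ux)$ as $x\to\ux^+$. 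Taking $L_0$ equal to the maximum of this boundary slope and the interior bounds from the previous paragraph finishes the proof. (Alternatively, once one knows — via the Corollary and a standard verification — that $v_j$ is the value function of~\eqref{control_problem}, the estimate is the classical $L$‑Lipschitz bound for such value functions, with $L=\max_j(r\ux+y_j)^{-\gamma}=\max_j u'(r\ux+y_j)$, reflecting that the optimal consumption never falls below its constrained‑state value $r\ux+y_j$; I would use the scaling control $c_t=(1-\theta)(r\ux+y_j)+\theta c_t^{\prime}$ with $\theta=(x-\ux)/(x^{\prime}-\ux)$ plus a boundary correction near $\ux$.) The hard part is making either the boundary comparison or the control correction quantitative and uniform in $j$, which is exactly where the device from Theorem~\ref{comparison} is needed.
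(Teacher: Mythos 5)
Your underlying strategy coincides with the paper's: both arguments extract the Lipschitz bound from the fact that the equation forbids large elements of the semidifferentials of $v_j$ --- nonnegativity of $D^-v_j$ because $H=+\infty$ for $p<0$, and an upper bound because $H(x,y_j,\cdot)$ is coercive where $rx+y_j>0$. The difference lies in how the differential bound is converted into an increment bound, and that is where your write-up has a genuine gap. The fact you call elementary --- that ``$p\le L$ for every $p\in D^-w(x)$'' implies $w(x)-w(z)\le L(x-z)$ --- is \emph{not} the elementary mean-value inequality: the elementary one pairs upper bounds on increments with the superdifferential $D^+$ (the subsolution side), and the coherent pairing for $D^-$ gives lower bounds on increments (which is exactly how your monotonicity step works). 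The $D^-$ version you invoke is true, but it is essentially the approximate mean value theorem of Zagrodny and cannot be cited as obvious. The paper's proof is precisely the device that makes this step rigorous: for fixed $x$ it considers $\min_z\{v_j(z)+C|x-z|\}$; a minimizer $\bar z\neq x$ puts $C\,\sign(x-\bar z)$ into $D^-v_j(\bar z)$, which the supersolution inequality (via $H=+\infty$ for $p<0$ on one side and coercivity on the other) excludes for $C$ large, so the minimizer is $x$ itself and the Lipschitz bound follows. You should either adopt that device or actually prove the one-dimensional $D^-$ mean-value inequality. (Your treatment of the region $rx+y_j\le 0$ through the subsolution inequality and $D^+$ is the coherent pairing and is sound --- the paper is silent on this case --- but the ``gluing across $x_j^\ast$'' is asserted rather than carried out.)

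The boundary paragraph should be dropped: it addresses a non-problem with an argument that does not work as written. The Lipschitz inequality is closed under limits, so once $v_j(x)-v_j(z)\le L_0(x-z)$ holds for all $\ux<z\le x$ it holds for $z=\ux$ by continuity of $v_j$; no separate estimate at $\ux$ is needed, and the interior differential bounds are uniform near $\ux$ because $rx+y_j$ stays bounded away from $0$ there. Moreover the comparison with $\ell_j$ is oriented the wrong way: $\ell_j(\ux)=\widehat{\su}_j(\ux)$ is the value of the \emph{sub}solution barrier of Proposition \ref{comparison1d}, so it lies below (and is not known to equal) $v_j(\ux)$, hence an inequality $v_j\le\ell_j$ near $\ux$ could not bound $v_j(x)-v_j(\ux)$ from above; and the coupling term $\lambda_j(v_{\bjm}-\ell_j)$ is of order one and cannot be absorbed by an affine function that satisfies $\rho\ell_j=H(\cdot,y_j,D\ell_j)$ with equality at $\ux$.
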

\begin{proof}
We consider $x,z\in [\ux,+\infty]$ and and aim to show that there exists a constant \( C \) such that  
	\begin{equation}\label{eq:claim_L1}
		\vert v_j(x) -  v_j(z) \vert\leq C |x - z|,\quad j=1,2.
	\end{equation}
	Consider the problem $\min_{z}v_j(z)+C\vert x-z\vert$, we show that the minimizer $\bar{z}$ coincides with $x$. Assume $\bar{z}\neq x$, then $\vert x-z\vert$ is differentiable and by definition of the viscosity supersolution 
$$
	\rho v_j(\bar{z}) \geq H \left( \bar{z}, y_j, -C \frac{\bar{z} - x}{|\bar{z} - x|} \right)+\lambda_j(v_\bjm(\bar{z}) -v_j(\bar{z}) ).
$$	
	If \( \bar{z} \neq \underline{x} \), we must have \( \bar{z} < x \), since otherwise  	
	$
	H \left( \bar{z}, y_j, -C \frac{\bar{z} - x}{|\bar{z} - x|} \right) = +\infty.
	$	
	This implies a contradiction with boundedness of $v$. When $\bar{z} < x $ and for sufficiently large \( C \), we also obtain a contradiction with  boundedness of $v$ from the coercivity of Hamiltonian \eqref{def H}. Therefore, for such \( C \), we conclude that \( \bar{z} =x \), leading to  	
	$$
	v_j(z) + C |z - x| \geq v(\bar{z}) + C |\bar{z} - x| = v_j(x),\quad v_j(x)- v_j(z)\leq C |z - x|.
	$$
	By symmetry we can also obtain $v_j(z)- v_j(x)\leq C |z - x|$. This establishes \eqref{eq:claim_L1}. 
	\end{proof}

We observe that if we assume $r\geq 0$ then we can obtain more intuitive sub and supersolutions. These results further justify using \cref{def vis sol} to select the ``right solution'' from the economic modeling point of view. 
\begin{proposition}\label{comparison1d r}
Let $r> 0$. The function
\begin{equation}\label{sub sol}
	\check{\su}_j(x) = \frac{1}{\rho} \frac{(rx + y_j)^{1 - \gamma}}{1 - \gamma}
\end{equation}
is a subsolution  of \eqref{eq:HJB1d}. The function 
\begin{equation}\label{super sol}
	\check{\sv}_j(x) =\left( \frac{\rho - r}{\gamma} + r \right)^{-\gamma} \frac{(x + y_j/r)^{1 - \gamma}}{1 - \gamma}
\end{equation}
is  a supersolution  of \eqref{eq:HJB1d}.
\end{proposition}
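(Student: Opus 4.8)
The plan is to verify both assertions by direct, essentially classical computations; the only genuinely delicate point is the behaviour of the subsolution at the boundary $x=\ux$, and that is handled verbatim as in the proof of \cref{comparison1d}. First I would record the elementary facts that will be used throughout. Since $0<r<\rho$, $\ux\le 0$ and $\rho\ux+y_j>0$, we have $r\ux+y_j\ge\rho\ux+y_j>0$, hence $rx+y_j>0$ and, dividing by $r>0$, also $x+y_j/r>0$ for every $x\ge\ux$; moreover the constant $\kappa:=\tfrac{\rho-r}{\gamma}+r$ is strictly positive. Because $\gamma>1$, both $\check{\su}_j$ and $\check{\sv}_j$ are then of class $C^1$ on $[\ux,\infty)$ and negative, and (as I check below) they have strictly positive derivatives there, so the finite branch of \eqref{def H} applies along both gradients.

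For the subsolution claim I would differentiate $\check{\su}_j$: $D\check{\su}_j(x)=\tfrac{r}{\rho}(rx+y_j)^{-\gamma}>0$, while $\rho\check{\su}_j(x)=\tfrac{(rx+y_j)^{1-\gamma}}{1-\gamma}$. By \cref{lemma:H}, $H(x,y_j,p)\ge\tfrac{(rx+y_j)^{1-\gamma}}{1-\gamma}$ for \emph{every} $p>0$, so $\rho\check{\su}_j(x)\le H(x,y_j,D\check{\su}_j(x))$ holds classically at every $x>\ux$ (in fact strictly, since $\tfrac{r}{\rho}<1$ keeps $D\check{\su}_j(x)$ away from the minimizer $(rx+y_j)^{-\gamma}$), whence $\check{\su}_j$ is a viscosity subsolution on $(\ux,\infty)$. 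At $x=\ux$ I would argue exactly as for $\widehat{\su}_j$ in \cref{comparison1d}: if $\varphi\in C^1[\ux,\infty)$ and $\check{\su}_j-\varphi$ has a local maximum at $\ux$ then $D\varphi(\ux)\ge D\check{\su}_j(\ux)>0$, and \cref{lemma:H} again yields $H(\ux,y_j,D\varphi(\ux))\ge\tfrac{(r\ux+y_j)^{1-\gamma}}{1-\gamma}=\rho\check{\su}_j(\ux)$.

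For the supersolution claim I expect $\check{\sv}_j$ to be not merely a supersolution but a classical solution of \eqref{eq:HJB1d} on $(\ux,\infty)$ — it is the value function of the scalar, unconstrained, no income-switching problem with the linear consumption policy $c=\kappa(x+y_j/r)$ — so the computation should close with an identity. Concretely, $D\check{\sv}_j(x)=\bigl(\kappa(x+y_j/r)\bigr)^{-\gamma}>0$; substituting into \eqref{def H} and using $rx+y_j=r(x+y_j/r)$ gives $H(x,y_j,D\check{\sv}_j(x))=\kappa^{-\gamma}(x+y_j/r)^{1-\gamma}\bigl(r+\tfrac{\gamma}{1-\gamma}\kappa\bigr)$, whereas $\rho\check{\sv}_j(x)=\kappa^{-\gamma}(x+y_j/r)^{1-\gamma}\,\tfrac{\rho}{1-\gamma}$. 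Since the common prefactor is positive, the inequality $\rho\check{\sv}_j\ge H$ reduces to $\tfrac{\rho}{1-\gamma}\ge r+\tfrac{\gamma}{1-\gamma}\kappa$, i.e. (multiplying through by $1-\gamma$) to $\rho\le r(1-\gamma)+\gamma\kappa$; and $r(1-\gamma)+\gamma\kappa=\rho$ by the definition of $\kappa$, so equality holds and $\check{\sv}_j$ is in particular a viscosity supersolution on $(\ux,\infty)$.

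The computations are routine; the two places needing care are keeping the sign of $1-\gamma<0$ straight when clearing denominators, and the boundary point $x=\ux$ in the subsolution part — which is precisely the mechanism already used for $\widehat{\su}_j$: a $C^1$ test function touching $\check{\su}_j$ from above at $\ux$ must have a larger, still positive, slope, so $H$ there stays above its minimum value $\rho\check{\su}_j(\ux)$. I would also note that $r>0$ is genuinely needed: it is what makes $x+y_j/r$ meaningful and forces $D\check{\su}_j>0$, keeping \eqref{def H} in its finite regime along both gradients.
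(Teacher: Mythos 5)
Your proof is correct and follows essentially the same route as the paper's: the subsolution inequality on $(\ux,\infty)$ via \cref{lemma:H} together with $r<\rho$, the boundary point $\ux$ handled by the same test-function argument as for $\widehat{\su}_j$ in \cref{comparison1d}, and the supersolution verified by checking that $\check{\sv}_j$ solves \eqref{eq:HJB1d} classically. The only difference is that you write out explicitly the algebra (the identity $r(1-\gamma)+\gamma\kappa=\rho$) that the paper dismisses as ``easy to check.''
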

\begin{proof}
For $x>\ux$, it is easy to check $\check{\sv}_j$ satisfies \cref{eq:HJB1d} in the classical sense.\par
Since $D\check{\su}_j(x)=\frac{r}{\rho} (rx + y_j)^{-\gamma}$ and $\rho>r$, we can obtain from \cref{H min} that 
$$
H(x,y_j,D\check{\su}_j)>\frac{(rx + y_j )^{1 - \gamma}}{1 - \gamma}=\rho \check{\su}_j(x),\quad \forall x>\ux.
$$
The fact that $\check{\su}_j$ is a subsolution at $x=\ux$ has been shown in the proof of \cref{comparison1d}. 
\end{proof}

\begin{remark}
The function $\check{\sv}_j$ corresponds to the value function for an agent under the natural borrowing constraint $\underline{x} = -y_j/r$, it is the ``complete market solution'' to \cref{eq:HJB1d}.
It is interesting to understand why $\check{\sv}_j(x)$  is not a subsolution at $\ux$ in the sense of \cref{def vis sol}.  From $\rho > r$, $\gamma > 1$, we have  $\frac{\rho - r}{\gamma} + r > 0$ and can then obtain the inequality
\begin{equation}\label{ineq}
	\left( \frac{\rho - r}{\gamma} + r \right)^{-\gamma} r^{\gamma - 1} \leq \frac{1}{\rho}.
\end{equation}
To prove \eqref{ineq}, define the function
$
f(r) = \rho \left( \frac{\rho - r}{\gamma} + r \right)^{-\gamma} r^{\gamma - 1}.
$
Clearly $f(\rho)=1$. By simple computation we show $f'(r)>0$ with $\rho>r$ and $\gamma>1$. Finally $f(r)<1$ for $r<\rho$ and we obtain \eqref{ineq}. 
We consider the test function
$$
\varphi(x) = \frac{r^\gamma (x + y_j/r)^{1 - \gamma}}{1 - \gamma}.
$$
Since $\rho > r$ and $\gamma > 1$, we have
$
r^{-\gamma} > \left( \frac{\rho - r}{\gamma} + r \right)^{-\gamma}.
$
Therefore,
$$
D\varphi(\underline{x}) = r^{-\gamma} (\underline{x} + y_j/r)^{-\gamma} > \left( \frac{\rho - r}{\gamma} + r \right)^{-\gamma} (\underline{x} + y_j/r)^{-\gamma} = \check{\sv}_j(\underline{x}).
$$
This implies that $\underline{x}$ is a local maximum of $\check{\sv}_j- \varphi$. Applying \eqref{ineq} and \eqref{H min}, we find
$$
\rho \check{\sv}_j(\underline{x}) > \frac{(r\underline{x} + y_j)^{1 - \gamma}}{1 - \gamma} = \max_{c \ge 0} \left\{ u(c) + (r\underline{x} + y_j - c) D\varphi(\ux) \right\}.
$$
\end{remark}
\begin{remark}
With $r>0$, we can also use \cref{ineq} to check that 
$\check{\su}_j(x)\leq \check{\sv}_j(x)$ for all $x\in [\ux,+\infty)$. This is an example of the comparison principle. 
\end{remark}
For the rest of the section we assume the viscosity solution $v_j$ is concave. It is important to notice this is justified if we assume $v_j$ is the value function of the optimal control problem. The equivalence between value function and the unique state constraint viscosity solution is standard (c.f. \cite{Soner_1986}), but not proven in this paper. We plan to do it in our future works. 
\begin{proposition}
Let $v=(v_1,v_2)$ be the solution to the system \eqref{eq:HJB} and assume $v_j$ is concave. We have $v_j$ is $C^1$ in $(\ux,+\infty)$. In particular, $Dv_j$ is uniformly continuous in $[\ux,R]$ for any constant $R>\ux$. 
\end{proposition}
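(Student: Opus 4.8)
The plan is to exploit the concavity of $v_j$ together with the strict convexity of $p\mapsto H(x,y_j,p)$ on $[0,+\infty)$ in order to rule out corners of $v_j$ in $(\ux,+\infty)$. It is convenient to set $w_j(x):=\rho v_j(x)-\lambda_j\big(v_{\bar{\jmath}}(x)-v_j(x)\big)$, which is continuous on $[\ux,+\infty)$ because $v_1,v_2$ are Lipschitz by Proposition~\ref{solution property}. Since $v_j$ is Lipschitz and concave, it is differentiable off a countable set $N_j\subset(\ux,+\infty)$; its one-sided derivatives $v_j'(x^-)\ge v_j'(x^+)$ exist at every interior point, $x\mapsto v_j'(x^+)$ is non-increasing and right-continuous, $x\mapsto v_j'(x^-)$ is non-increasing and left-continuous, and both coincide with $v_j'(x)$ whenever $x\notin N_j$. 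The first observation I would record is that at every $x\notin N_j$ the equation holds classically, $w_j(x)=H(x,y_j,v_j'(x))$, and since $w_j(x)$ is finite while $H(x,y_j,\cdot)\equiv+\infty$ on $(-\infty,0)$ this forces $v_j'(x)\ge 0$; hence $v_j$ is non-decreasing and all its one-sided derivatives are nonnegative.

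Next I would exclude corners. Fix $x_0\in(\ux,+\infty)$ and suppose $p^-:=v_j'(x_0^-)>v_j'(x_0^+)=:p^+\ge 0$. For any $p\in[p^+,p^-]$, which is exactly the superdifferential of the concave $v_j$ at $x_0$, the affine function $\varphi(x)=v_j(x_0)+p(x-x_0)$ satisfies $v_j\le\varphi$ with equality at $x_0$, so $v_j-\varphi$ has a maximum at $x_0$ and the subsolution property yields $w_j(x_0)\le H(x_0,y_j,p)$; taking the infimum over $p\in[p^+,p^-]$ gives $w_j(x_0)\le\min_{[p^+,p^-]}H(x_0,y_j,\cdot)$. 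On the other hand, choosing points of differentiability $x_n\downarrow x_0$ and $x_n\uparrow x_0$ respectively, one has $v_j'(x_n)\to p^+$ and $v_j'(x_n)\to p^-$ by the one-sided continuity of the one-sided derivatives; passing to the limit in $w_j(x_n)=H(x_n,y_j,v_j'(x_n))$ and using the continuity of $(x,p)\mapsto H(x,y_j,p)$ on $\R\times[0,+\infty)$ (which includes the boundary point $p=0$) gives $w_j(x_0)=H(x_0,y_j,p^+)=H(x_0,y_j,p^-)$. But $H(x_0,y_j,\cdot)$ is strictly convex on $[0,+\infty)$, so equality of its values at the distinct endpoints $p^+<p^-$ forces its minimum over $[p^+,p^-]$ to lie strictly below $H(x_0,y_j,p^+)$, contradicting $w_j(x_0)=H(x_0,y_j,p^+)\le\min_{[p^+,p^-]}H(x_0,y_j,\cdot)$. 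Hence $N_j\cap(\ux,+\infty)=\emptyset$, $v_j$ is differentiable on all of $(\ux,+\infty)$, and since $v_j'$ agrees there with both a right-continuous and a left-continuous function it is continuous, i.e. $v_j\in C^1(\ux,+\infty)$.

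Finally, to pass to $\ux$ and obtain uniform continuity: on $(\ux,+\infty)$ the derivative $v_j'$ is continuous, non-increasing (by concavity) and bounded, $0\le v_j'\le L_0$ with $L_0$ a Lipschitz constant of $v_j$; hence $v_j'(x)$ increases as $x\downarrow\ux$ and has a finite limit $L_j\in[0,L_0]$, which — by the fundamental theorem of calculus applied to the absolutely continuous function $v_j$ — equals the right derivative of $v_j$ at $\ux$. Setting $Dv_j(\ux):=L_j$ extends $Dv_j$ to a continuous function on $[\ux,R]$ for every $R>\ux$, and compactness of $[\ux,R]$ gives uniform continuity. I expect the corner-exclusion step to be the main difficulty: one must test the subsolution inequality against the whole superdifferential interval $[p^+,p^-]$, match this with the classical equality $w_j=H(\cdot,y_j,v_j')$ along nearby smooth points, and handle the degenerate endpoint $p^+=0$ where $H$ is no longer smooth in $p$; once the identity $H(x_0,y_j,p^+)=H(x_0,y_j,p^-)$ is in hand, strict convexity of $H$ in $p$ closes the argument immediately.
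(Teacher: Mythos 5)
Your proof is correct and takes essentially the same route as the paper: where the paper delegates the corner-exclusion step to \cite[Section 5.2, Proposition 5.7]{bardi1997optimal} (strict convexity of $H$ in $p$ combined with concavity of $v_j$ and the a.e.\ pointwise equation), you have simply written that argument out in full, including the needed care at $p=0$. The final step --- monotonicity and boundedness of $Dv_j$ near $\ux$, identification of the limit with the one-sided derivative at $\ux$, and uniform continuity on the compact interval $[\ux,R]$ --- matches the paper's argument.
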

\begin{proof}
From the Lipschitz continuity, $v_j$ is differentiable almost everywhere in $(\ux,+\infty)$. By using the strict convexity of $H(x,y_j,p)$ (see \eqref{def H}) in the $p$ variable, with the same arguments from \cite[Section 5.2, Proposition 5.7]{bardi1997optimal}, we can in fact show that $v_j$ is $C^1$ in $(\ux,+\infty)$. From coercivity, we obtain that $Dv_j(x)$ is uniformly bounded for $x>\ux$. \par
From the concavity of $v_j$, $Dv_j(x)$ is monotone increasing as $x\to \ux$. There exists $Dv_j(\ux^+)$ such that $Dv_j(\ux^+)=\lim_{x\to \ux,x>\ux}Dv_j(x)$. Moreover, we define $Dv_j(\ux)=\lim_{\epsilon \to 0,\epsilon>0}\frac{v_j(\ux+\epsilon)-v_j(\ux)}{\epsilon}$. Since $v_j$ is $C^1$ in $(\ux,R]$ and continuous in $[\ux,R]$, we obtain $Dv_j(\ux)=Dv_j(\ux^+)$ by using finite increment theorem. Therefore, $Dv_j$ is continuous on the interval $[\ux,R]$ and we obtain the uniform continuity by Heine-Borel theorem. 
\end{proof}


We consider some stability properties w.r.t. the interest rate $r$. 
\begin{proposition}\label{stability}
Assume that the solution $v^\i$ to system \eqref{eq:HJB} corresponding to an interest rate $r^\i$ is concave. For $r^\i \to r$, the sequence $v^\i$ converges in $C^1[\ux,R]$ to $v$ for any constant $R>\ux$. 
\end{proposition}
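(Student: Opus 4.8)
The plan is to combine a uniform-in-$\iota$ barrier/compactness argument with the uniqueness from Theorem \ref{comparison} to get $C^0$ convergence, then upgrade to $C^1$ convergence using the strict convexity of $H$ in $p$ and the concavity of the limit. First I would note that the standing assumption $r^\i \to r < \rho$ means that for $\iota$ large the whole sequence $r^\i$ lies in a fixed compact interval $[-M,\rho-\eta]$, so that $r^\i \ux + y_j$ stays uniformly bounded away from $0$ (using $\rho\ux+y_j>0$) and the barrier functions $(\widehat{\su}_1,\widehat{\su}_1)$ and $(0,0)$ from Proposition \ref{barrier} bound every $v^\i$ uniformly: $\widehat{\su}_1(x)\le v_j^\i(x)\le 0$ with constants independent of $\iota$. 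Then I would invoke Proposition \ref{solution property}: re-reading its proof, the Lipschitz constant $C$ is chosen only from the uniform bound on $\|v^\i\|_\infty$ and the coercivity of $H$, both of which are uniform in $\iota$ on the compact $r$-interval; hence the $v^\i$ are equi-Lipschitz on $[\ux,R]$. By Arzel\`a--Ascoli, along any subsequence $v^{\i'}\to w$ uniformly on $[\ux,R]$ for some Lipschitz $w$.

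The next step is to identify $w$. By the standard stability of viscosity solutions under locally uniform convergence (the Hamiltonian $H(x,y_j,p)$ in \eqref{def H} depends continuously on the parameter $r$, uniformly on compact sets in $(x,p)$ with $p\ge 0$, and the constraint region $[\ux,\infty)$ does not move), the limit $w=(w_1,w_2)$ is a constrained viscosity solution of \eqref{eq:HJB} for the limiting interest rate $r$. Here I would be slightly careful that the relaxed-limit / stability argument must be adapted to the $+\infty$ branch of $H$ exactly as in the comparison proof, but the subsolution inequality only ever uses $H$ on $p\ge0$ and the supersolution inequality is an inequality $\ge$, so the usual argument goes through. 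Then Theorem \ref{comparison} (with the $\liminf$ extension \eqref{liminf super}, which is automatic here since $w$ is continuous) gives $w=v$, the unique bounded solution for $r$. Since the limit is independent of the subsequence, the full sequence $v^\i\to v$ uniformly on every $[\ux,R]$.

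It remains to promote $C^0$ to $C^1$ convergence, and this is the main obstacle. The idea is: each $v_j^\i$ is concave, hence $Dv_j^\i$ is monotone; concavity is preserved in the uniform limit, so $v_j$ is concave too, and by the previous Proposition $v_j\in C^1(\ux,\infty)$ with $Dv_j$ continuous up to $\ux$ on $[\ux,R]$. For a sequence of concave functions converging pointwise to a $C^1$ concave function, the derivatives converge pointwise on the interior, and — since the limit derivative is continuous and the $Dv_j^\i$ are monotone — the convergence of derivatives is in fact locally uniform on $(\ux,R]$ (a Dini-type / Arzel\`a argument for monotone families with continuous limit). The only delicate point is uniformity up to the constrained endpoint $\ux$: here I would use the equi-coercivity of $H$ to get a uniform bound $Dv_j^\i(\ux^+)\le \bar C$, then argue as in the proof that $Dv_j^\i(\ux)=Dv_j^\i(\ux^+)$ via the finite-increment theorem, and combine the uniform interior convergence with the uniform modulus of continuity of the $Dv_j^\i$ near $\ux$ (again from concavity plus the equi-Lipschitz bound) to conclude $Dv_j^\i\to Dv_j$ uniformly on all of $[\ux,R]$. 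Together with $v_j^\i\to v_j$ uniformly, this is exactly convergence in $C^1[\ux,R]$.
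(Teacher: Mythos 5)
Your proposal is correct and follows essentially the same route as the paper: uniform convergence of $v^\i$ from the stability of constrained viscosity solutions (which you justify in more detail via uniform barriers, equi-Lipschitz bounds, Arzel\`a--Ascoli and the comparison principle), then pointwise convergence of $Dv_j^\i$ from concavity, upgraded to uniform convergence on $[\ux,R]$ using the monotonicity of $Dv_j^\i$ together with the uniform continuity of $Dv_j$. The paper delegates these last two steps to citations (Cannarsa--Sinestrari and a lemma of Achdou et al.), so your write-up is simply a more self-contained version of the same argument.
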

\begin{proof}
By stability property of constrained viscosity solution we have $v^\i$ converges to $v$ uniformly. Since $v_j^\i$ is concave, we have  $D v^\i_j(x)$ converges to $Dv_j(x)$ pointwise for $x\in [\ux,+\infty)$ (\cite[Theorem 3.3.3]{cannarsa2004semiconcave}). The local uniform convergence can be proved as in \cite[Lemma 5.3]{achdou2023mean}, using the concavity of $v^\i_j$ and uniform continuity of $Dv_j$. 
\end{proof}
We denote by $c_j^{\i,*}$ and $s_j^{\i,*}$ the optimal consumption and saving policies when $r=r^\i$. From Proposition \ref{stability}, the sequences $c_j^{\i,*}$ and $s_j^{\i,*}$ converge locally uniformly to $c^*_j$ and $s^*_j$ as $r^\i\to r$ .

The following theoretical results are again based on \cite[Proposition 1 and 2]{achdou2022income}. The proof is based on considering $v=(v_1,v_2)$ as the value function of the optimal control problem \eqref{control_problem}. We give it here in order to justify using state constraint boundary condition with a sufficiently large $x_{\max}$ such that $x_{\max}>\bar{x}$, while designing the numerical algorithms on the domain $[\ux,x_{\max}]$.   
\begin{proposition}
The saving policy for the low income type satisfies $s^*_1(x)\leq 0$ for all $x\in [\ux,+\infty)$. In particular $s^*_1(\ux)=0$. There exists $\ux\leq \bar{x}<+\infty$ such that 
$s_2^*(x)<0$ for all $x>\bar{x}$ and 
$s_2^*(x)>0$ for all $\ux<x<\bar{x}$.
Moreover $\mu_2=0$ if $\bar{x}>\ux$. 
\end{proposition}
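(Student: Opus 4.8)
The plan is to work directly with $v=(v_1,v_2)$ as the value function of the optimal control problem \eqref{control_problem}, exploiting the feedback form of the optimal control. Recall that the optimal consumption satisfies $c_j^*(x) = (Dv_j(x))^{-1/\gamma}$ (the argmax in \eqref{def H}), so the optimal drift is $s_j^*(x) = r x + y_j - c_j^*(x) = r x + y_j - (Dv_j(x))^{-1/\gamma}$; this is well-defined and continuous on $(\ux,+\infty)$ by the $C^1$ regularity of $v_j$ established in the previous proposition. The key comparison input is the ordering of the value functions: since $y_1 < y_2$, one has $v_1 \le v_2$ on $[\ux,+\infty)$ (this is a comparison-principle argument: $v_1$ is a subsolution of the equation satisfied by $v_2$ because increasing the income can only increase $H$; alternatively it is immediate from the control representation). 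By concavity one also controls $Dv_1$ versus $Dv_2$ — heuristically $Dv_1 \ge Dv_2$ since the poorer agent values an extra unit of wealth more — which forces $c_1^*(x) \le c_2^*(x)$.

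For the first claim, $s_1^*(x)\le 0$: I would argue by contradiction. If $s_1^*(x_0) > 0$ at some $x_0 > \ux$, then along the optimal trajectory wealth strictly increases near $x_0$ while in income state $y_1$; but in the lowest income state, with $r < \rho$, the agent's natural tendency (Euler equation: $\dot c_t / c_t = (r-\rho)/\gamma + \text{jump terms} < 0$ in expectation for the low type who only jumps up) is to decumulate, and one derives a contradiction with the HJB equation evaluated via \eqref{H min} and the coercivity \eqref{coerc}. Concretely: at $\ux$, the state constraint forces $s_1^*(\ux) \ge 0$; combined with the admissibility $r\ux + y_1 > 0$ and the monotonicity of $Dv_1$, if $s_1^*$ were ever positive it would have to be positive on a whole interval, and integrating the HJB relation $\rho v_1 = H(x,y_1,Dv_1) + \lambda_1(v_2 - v_1)$ against this would violate the upper barrier $v_1 \le 0$ from Proposition~\ref{barrier}. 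The identity $s_1^*(\ux) = 0$ then follows: $s_1^*(\ux)\ge 0$ from the state constraint (the agent cannot leave $[\ux,\infty)$) and $s_1^*(\ux)\le 0$ from the first part, evaluated at $x=\ux$ using $Dv_1(\ux)=Dv_1(\ux^+)$.

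For the second claim, I would study the continuous function $h(x) := s_2^*(x) = rx + y_2 - (Dv_2(x))^{-1/\gamma}$ on $(\ux,+\infty)$. Near $\ux$: since $r\ux + y_2 > r\ux + y_1 > 0$ and $Dv_2(\ux^+)$ is finite, while for the low type $s_1^*(\ux)=0$ means $(Dv_1(\ux))^{-1/\gamma} = r\ux + y_1$, and $Dv_2 \le Dv_1$ gives $(Dv_2(\ux))^{-1/\gamma} \ge r\ux + y_1$, so $h(\ux^+) = (r\ux+y_2) - (Dv_2(\ux^+))^{-1/\gamma}$; one must check this is strictly positive, which follows because the high type is strictly less constrained — if $h(\ux^+)\le 0$ one could again contradict optimality, since an agent at $\ux$ in state $y_2$ with positive admissible savings room should not choose to consume all of it (this uses $y_2 > y_1$ strictly and the strict concavity of $u$). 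As $x \to +\infty$: the upper bound $v_2 \le 0$ together with $\rho v_2 = H(x,y_2,Dv_2) + \lambda_2(v_1 - v_2) \ge H(x,y_2,Dv_2) - C$ and coercivity \eqref{coerc} forces $Dv_2(x) \to 0$ fast enough that $(Dv_2(x))^{-1/\gamma} \to +\infty$ faster than $rx$, giving $h(x) < 0$ for large $x$ — more carefully, if $r\le 0$ this is immediate, and if $r>0$ one uses the explicit supersolution $\check\sv_2$ of Proposition~\ref{comparison1d r} to bound $Dv_2$ from above and conclude. Continuity of $h$ then gives existence of $\bar x \in (\ux,+\infty)$ with $h(\bar x)=0$; to get the clean sign pattern ($h>0$ on $(\ux,\bar x)$, $h<0$ on $(\bar x,+\infty)$) I would show $h$ is monotone decreasing where it matters, or rather that the set $\{h>0\}$ is an interval, using concavity of $v_2$ (hence monotonicity of $x\mapsto (Dv_2(x))^{-1/\gamma}$, which is increasing) against the linear term $rx+y_2$ — the difference of an increasing function and a linear one can change sign at most in a controlled way; combined with the endpoint signs this pins down $\bar x$. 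The main obstacle I anticipate is precisely this last monotonicity/uniqueness-of-sign-change step: establishing that $\{s_2^* > 0\}$ is genuinely an interval rather than just nonempty requires either a careful ODE argument along characteristics (the drift field $s_2^*$ cannot have the forbidden alternating sign pattern because trajectories would oscillate in a way incompatible with the Euler equation and the upward-only jumps to state unchanged) or an appeal to the stationary FPK structure. Finally, $\mu_2 = 0$ when $\bar x > \ux$: the Dirac mass at $\ux$ in state $y_2$ can only be sustained if agents in state $y_2$ are pushed against the constraint, i.e. $s_2^*(\ux) \le 0$; but $\bar x > \ux$ means $s_2^*(x) > 0$ for $x$ just above $\ux$, so by continuity $s_2^*(\ux) \ge 0$ and in fact agents at $\ux$ in state $y_2$ strictly move inward (or the set where they stay has measure zero), so no atom can accumulate there — formally, in the FPK equation \eqref{MFG}(ii) a Dirac at $\ux$ for $g_2$ would require the flux $(r\ux + y_2 - c_2^*(\ux))\mu_2$ to be non-positive, which contradicts $s_2^*(\ux) \ge 0$ unless $\mu_2 = 0$.
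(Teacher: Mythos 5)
The paper does not actually prove this proposition: it defers entirely to Propositions 1 and 2 of \cite{achdou2022income}, recording only that the argument treats $v=(v_1,v_2)$ as the value function of \eqref{control_problem}. Your proposal follows that same route in spirit, but as written the load-bearing steps are asserted rather than proved. The first claim, $s_1^*\le 0$, really rests on the ordering $Dv_2(x)\le Dv_1(x)$: differentiating \eqref{eq:HJB} for $j=1$ along the optimal feedback gives $(\rho-r+\lambda_1)Dv_1-\lambda_1 Dv_2=s_1^*D^2v_1$, and only with $Dv_2\le Dv_1$ (and $Dv_1>0$) does the left side become strictly positive, which against concavity ($D^2v_1\le 0$) rules out $s_1^*\ge 0$. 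You label $Dv_1\ge Dv_2$ as ``heuristic''; it is in fact a separate, nontrivial lemma (equivalently, that $v_2-v_1$ is non-increasing) and is one of the two substantive inputs of the whole proposition. Your alternative ``concrete'' argument --- that integrating the HJB relation over an interval where $s_1^*>0$ ``would violate the upper barrier $v_1\le 0$'' --- is not a deduction: it is never specified what is integrated against what, and a positive drift on an interval is not by itself incompatible with $v_1\le 0$ (the constant subsolution $\widehat{\su}_1$ of Proposition~\ref{barrier} already shows that bounded negative values coexist with either sign of the drift).

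The second genuine gap is one you flag yourself: that $\{s_2^*>0\}$ is an interval. Your fallback --- ``a linear function minus an increasing one changes sign in a controlled way'' --- does not hold: $s_2^*(x)=rx+y_2-c_2^*(x)$ with $c_2^*$ merely increasing can cross zero arbitrarily many times unless you control the slope of $c_2^*$ against $r$ (or establish convexity of $c_2^*$, or run the differentiated-HJB sign argument at every zero of $s_2^*$), and none of this is carried out. Without single crossing you obtain only ``$s_2^*<0$ for large $x$ and $s_2^*$ vanishes somewhere,'' which is strictly weaker than the statement. Minor points: the proposition permits $\bar x=\ux$, so your effort to show $s_2^*(\ux^+)>0$ is not required and is in any case only gestured at; the tail argument needs the quantitative bound $Dv_2(x)<(rx+y_2)^{-\gamma}$ for large $x$, so the appeal to \cref{H min}, \eqref{coerc} and the sign of $\rho v_2-\lambda_2(v_1-v_2)$ must be made precise rather than cited; the $\mu_2=0$ step is acceptable in spirit, given the invariance characterization of $dm$. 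As it stands the proposal is a plausible outline of the argument in \cite{achdou2022income} rather than a self-contained proof.
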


\section{The  semi-Lagrangian scheme}
In this section we introduce the approximation schemes for systems \eqref{MFG} and \eqref{Dynamic MFG}. For the stationary system \eqref{MFG} we fix a step $h$ and we consider a discrete in time model which evolves at time $nh$, $n\in\N$.
\subsection{The discrete Hamilton-Jacobi-Bellman equation}
The dynamics of the representative agent is given by
\begin{equation}\label{eq:discrete_dynamics}
\left\{\begin{aligned}
\qquad & x_{n+1}=x_n+h(rx_n+y_n-c_n)\delta_{y_n,y_{n+1}}\\
&		x_0=x\ge \ux,\quad x_n\geq x
\end{aligned}\right.
\end{equation}
Here $y_n$ is Poisson process such that $\PP(y_{n+1}=y_\bjm|y_n=y_j)=\lambda_j h$ and
$\delta_{y,\bar y}=1$ if $y=\bar y$ and  $\delta_{y,\bar y}=0$ otherwise. Each agent  maximizes the cost functional 
\begin{equation*}
J^{h}_{j} (\{c_n\}; x)=\EE_{x,j}\left[	\sum_{n=0}^\infty h(1-\rho h)^n u(c_n)\right].
\end{equation*}
The corresponding value function is
$
	v^{h}_{j}(x)=\sup_{\{c_n\}\in\mC_j^h(x)}J^{h}_{j} (\{c_n\}; x)
$
where (c.f. \cite[Eq. (3.2)]{achdou2022simulating})
\begin{equation}\label{C set}
\mC^{h}_{j}(x):=\left\{{c}: c\ge 0\quad\text{and}\quad x+h(rx+y_j-c) \ge \ux \right\}.
\end{equation}
By   the Dynamic Programming Principle we get the HJB equation for $j=1,2$
\begin{equation}\label{eq:HJBh_1}
\begin{aligned}
	v^{h}_{j}(x)
	=\sup_{c\in \mC^{h}_{j}(x)}\left\{
	(1-\rho h)(1-\lambda_jh)v^{h}_{j}(x+h(rx+y_j-c))+ hu(c) 
	\right\}+(1-\rho h)\lambda_j hv^{h}_{\bar \jmath}(x),
	\end{aligned}
\end{equation}
or equivalently
\begin{equation}\label{eq:HJBh_2}
\begin{aligned}
\rho v^{h}_{j}(x)
={}&\sup_{c\in \mC^{h}_{j}(x)}\left\{ 
		(1-\rho h)  (1-\lambda_jh)\frac{v^{h}_{j}(x+h(rx+y_j-c))-v^{h}_{j}(x)}{h}+u(c)\right\}\\
		&+(1-\rho h)\lambda_j (v^{h}_{\bar \jmath}(x)-v^{h}_{j}(x)) , 
\end{aligned}
\end{equation}
where $\mC^{h}_{j}(x)$ is defined as in \eqref{C set}. We denote by $c^*_j(x) \in \mC^{h}_{j}(x)$ a control that attains the maximum in \eqref{eq:HJBh_2}.\par
The fully discrete scheme for the HJB equation is obtained by projecting the equation \eqref{eq:HJBh_1}, or \eqref{eq:HJBh_2}, on a grid. Fix $\Dx>0$ and set $\Delta=(h,\Dx)$. Let $x_i=\ux+i \Dx$, $i\in\NN$, be the points of the space grid. Consider a   $\mathbb{Q}_1$ basis  $(\beta_{i})_{i\in \NN}$, where   $\beta_i$  is a polynomial of degree less than or equal to $1$ and satisfies that  $\beta_i(x_k)=1$ if $i=k$ and $\beta_i(x_k)=0$, otherwise. Moreover, the support $\mbox{supp}(\beta_i)$ of $\beta_i$ is compact and  
\begin{equation}\label{eq:baricentric}
	0\leq \beta_i \leq 1 \; \; \;  \; \forall \; i\in \NN, \qquad\sum_{i\in  \NN} \beta_i(x)=1 \hspace{0.3cm} \forall \; x\in [\ux,\infty).
\end{equation}
Denote with $\mA^\Dx =\{x_i\}_{i\in\NN}$ the set of the vertices of the grid and
  $B(\mA^\Dx)$ be the space of bounded functions on $\mA^\Dx$. For $\phi\in B(\mA^\Dx)$, set $ \phi_{i}$ be its value at $x_i$. We consider the following  linear interpolation operator 
\begin{equation}\label{Eq:intrep}
	I[\phi](\cdot):=\sum_{i\in  \NN} \phi_i\beta_i(\cdot) \qquad \mbox{for } \phi \in B(\mA^\Dx) .
\end{equation}
We look for a  function $\bV \in  B(\mA^\Dx) $ which solves  \eqref{eq:HJBh_1}   at the vertices $x_i$  of the grid. We get the fully discrete Hamilton-Jacobi-Bellman equation
\begin{equation}\label{eq:HJBhD_1}
\begin{aligned}
&V^{\D}_{i,j}	\\
={}&\sup_{c\in \mC^{\D}_{j}(x_i)}\left\{
		hu(c)+(1-\rho h)\left[\lambda_j hV^{\D}_{i,\bjm} 
		+(1-\lambda_jh)\left(\sum_k \underbrace{\beta_k(x_i+hs_{i,j}(c))}_{M_{i,k}}V^{\D}_{k,j}\right)\right]
		\right\},\\
& s_{i,j}(c)=rx_i+y_j-c,\\
\end{aligned}
\end{equation}
where we define $V^{\D}_{k,j}= v^{\D}_{j}(x_k)$, $k\in \N$. We consider a matrix $M$ such that the $(i,k)-$ entry $M_{i,k}=\beta_k(x_i+hs_{i,j}(c))$, then $0\le M_{i,k}\le 1 $, $\sum_k M_{i,k}=1$ for all $i\in\N$. For any given $s\in \R$, there are only two non zero entries of the vector $M_{i}=(M_{i,k})_{k\in\N}$, i.e. $\beta_k(x_i+hs)$ and $\beta_{k+1}(x_{i}+hs)$ for $k$ such that $x_i+hs \in [x_k, x_{k+1}]$. Next, we denote by $\mathbf{c}_j$ a vector with elements $c_j(x_i)$. We denote the vector $\bV_j=V^{\D}_{\cdot,j}$. We can then write  \cref{eq:HJBhD_1} in vector form
\begin{equation}\label{eq:HJB_vect}
\bV_j=\sup_{\mathbf{c}\in \mC^{\D}_{j}}\left\{
hu(\mathbf{c})+(1-\rho h)\left[\lambda_j h\bV_{\bjm} 
+(1-\lambda_jh)\left(M(\mathbf{s}_j(\mathbf{c}))\bV_j\right)\right]
\right\}\qquad \text{for }j=1,2.
\end{equation}
In the evolutive case, consider the finite horizon optimal control problem 
\begin{equation*}
J^{h}_{j} (\{c_n\}; x)=\EE_{x,j}\left[\sum_{n=0}^{N-1} h(1-\rho h)^n u(c_n)+v_{j_N}(x_N)\right],
\end{equation*}
we obtain similarly a fully discrete HJB equation, by denoting the drift $s^n_{i,j}(c)=r_nx_i+y_j-c$,
\begin{equation}\label{evoHJBhD_1}
\left\{\begin{aligned}
\,&		V^{\D,n}_{i,j}	=\sup_{c\in \mC^{\D,n}_{j}(x_i)}\Biggl\{
		hu(c)
		+(1-\rho h)\left[\lambda_j hV^{\D,n+1}_{i,\bjm} 
		+(1-\lambda_jh)\left(\sum_k \beta_k\left(x_i+hs^n_{i,j}(c)\right)V^{\D,n+1}_{k,j}\right)\right]
		\Biggl\},\\
& V^{\D,N}_{i,j}=V^{\D,st}_{i,j}.
\end{aligned}\right.
\end{equation}  
We can also write \cref{evoHJBhD_1} in vector form, with $\bV^n_{j}=V^n_{\cdot,j}$: for $j=1,2$,
\begin{equation}\label{evoHJB vector}
\bV^n_{j}=\sup_{\mathbf{c}\in \mC^{\D,n}_{j}}\left\{
hu(\mathbf{c})+(1-\rho h)\left[\lambda_j h\bV^{n+1}_{\bjm} 
+(1-\lambda_jh)\left(M\left(\mathbf{s}^n_{j}(\mathbf{c})\right)\bV^{n+1}_{j}\right)\right]
\right\}.
\end{equation}       
\begin{remark}
It is clear that the vectors $\bV_{j}$ and $\bV^n_{j}$ depend also on $\D$, just like $V^{\D}_{i,j}$ and $V^{\D,n}_{i,j}$. We drop this dependence to alleviate the notation. In particular, $\bV_{j}$ and $\bV^n_{j}$ are introduced mainly to illustrate the well posednes and implementation of our numerical algorithms for fixed $\D$. 
\end{remark}  
                                                                                     
\subsection{The approximate Fokker-Planck equation}
To approximate the FPK equation, we first consider   the semi-discrete problem and then we project it on a grid. We consider a measure $dm^h$ on $[\ux,\infty)\times \{y_1,y_2\}$ of the form $dm^h=\sum_{j\in\{1,2\}} dm^{h}_{j}(x)\otimes\delta_{y_j}(y)$ where $dm^{h}_{j}$ is a measure on $[\ux,\infty)$. Since $dm^h$ is an invariant measure for the discrete process
\eqref{eq:discrete_dynamics}  with the optimal control $\{c^*_{j}(x_n)\}_n$, we have that for any function $\Phi$ on $[\ux,\infty)\times \{y_1,y_2\}$ the identity
$$
\int_{x\geq \ux}\Phi(x_n,y_n) dm^h(x,y)=\mathbb{E}\left\{\int_{x\geq \ux} \Phi (x_{n+1},y_{n+1})dm^h(x,y)\right\},\quad \int_{x\geq \ux}dm^h(x,y)=1,
$$
where $(x_{n+1},y_{n+1})$ are given  as in \eqref{eq:discrete_dynamics}. We denote $s^*_{j}(x)=rx+y_j-c^*_{j}(x)$. Writing the previous relation component-wise for $\phi:[\ux,\infty)\to\R$, we get 
\beq\label{Eq:semi-FPK}
\begin{aligned}
	&\int_{x> \ux}\phi(x)g^{h}_{j}(x)dx+\mu_{j}\phi(x)\\
	={}&\int_{x\geq \ux}\phi(x+hs^*_{j}(x))\PP(y_{n+1}=y_j|y_n =y_j)dm_j
	+\int_{x\geq \ux}\phi(x)\PP(y_{n+1}=y_{\bar \jmath}|y_n =y_j)dm_j
	\\
	 ={}&(1-\lambda_jh)\left(\int_{x> \ux}\phi(x+hs^*_{j}(x))g^{h}_{j}(x)dx+\mu_{j}\phi(\ux+hs^*_{j}(\ux))\right)
	+\lambda_{\bar \jmath}h\left(\int_{x> \ux}\phi(x)g^{h}_{\bar \jmath}(x)dx+\mu_{\bar \jmath}\phi(\ux)\right).
\end{aligned}
\eeq
Now to get the fully discrete FPK equation, consider a measure $dm^\D=\sum_{j\in\{1,2\}}dm_j^{\D}(x)\otimes\delta_{y_j}(y)$ on $\mA^\Dx\times \{y_1,y_2\}$ where
$dm^{\D}_{j}(x)=\sum_{k\in\NN}\left(G^{\D}_{k,j} \delta_{x_k}(x)\right)\D x$ and, for $\phi\in B(\mA^\Dx)$, test the identity above with a  $I[\phi](x)=\sum_i\beta_i(x)\phi_i$. We get
$$
\sum_{i,k}\phi_i \beta_i(x_k)G^{\D}_{k,j}=(1-\lambda_jh)\sum_{i,k}\phi_i \beta_i\left(x_k+hs_{k,j}^*\right)G^{\D}_{k,j}+\lambda_{\bar \jmath} h\sum_{i,k}\phi_i \beta_i(x_k)G^{\D}_{k,\bar \jmath}
$$
for the arbitrariness of $\phi$ and recalling that $\beta_i(x_k)=\delta_i(k)$, we get the fully discrete FPK equation
\begin{equation}\label{eq:FP}
\left\{\begin{aligned}
&(i)\qquad &&	G^{\D}_{i,j}=(1-\lambda_jh)  \sum_k \beta_i\left(x_k+hs_{k,j}^*\right) G^{\D}_{k,j}    +\lambda_{\bar \jmath}hG^{\D}_{i,\bjm},\\
&(ii) &&	\sum_{i\in\N}G^{\D}_{i,1}+\sum_{i\in\N}G^{\D}_{i,2}=1/\D x.
	\end{aligned}\right.
\end{equation}
Recalling \cref{eq:mg}, $G^\D_{i,j}$ approximates the density $g_j$ at $x=x_i$ and $G^\D_{0,j}\D x$ approximates the weights $\mu_j$ on the Dirac mass at $\ux$ as $\D \to 0$. We have
$$
G^\D_{0,j}=\frac{2}{\D x}\int_{\ux}^{\ux+\frac{\D x}{2}}g_j(x)dx+\frac{\mu_j}{\D x},\quad G^\D_{i,j}=\frac{1}{\D x}\int_{x_i-\frac{\D x}{2}}^{x_i+\frac{\D x}{2}}g_j(x)dx\quad \text{if}\quad i\geq 1.
$$
We can write \cref{eq:FP} in vector form
\begin{equation}\label{eq:FP_vect}
	\bG_{j}=(1-\lambda_jh) \left(M(\mathbf{s}_j^*)\right)^{\texttt{T}}\bG_{j}    +\lambda_{\bar \jmath}h\bG_{\bjm},\quad \bG^{\texttt{T}}_{j}I+\bG^{\texttt{T}}_{\bjm}I=1/\D x.
\end{equation}
Next we show this scheme preserves the structural property \eqref{Ng} on the discrete level. Summing \eqref{eq:FPhD} ($i$) on $i$ and recalling that $\sum_i\beta_i(x)=1$, we get
$
\lambda_1\sum_i G^{\D}_{i,1}=\lambda_2\sum_i G^{\D}_{i,2}
$
and with \cref{eq:FP} ($ii$) we obtain
\begin{equation}\label{eq:cond_mass}
	\sum_i G^{\D}_{i,j}\D x=\frac{\lambda_{\bar \jmath}}{\lambda_1+\lambda_2} \qquad j=1,2.
\end{equation}

\begin{remark}
We do not prove the convergence of the scheme for FPK equation, but it is clear with Taylor expansion that as $h\to 0$ \cref{Eq:semi-FPK} gives the weak formulation of the FPK equation (c.f. \cite[Eq. (4.69), p. 296]{achdou2021mean}): for all test functions $(\phi_1,\phi_2)\in \left(C_c^1([\ux,+\infty))\right)^2$,
\begin{equation*}
\int_{x>\ux}\left(\lambda_j g_j(x)-\lambda_{\bar \jmath} g_{\bar \jmath}(x)\right)\phi_j(x)dx+(\lambda_j\mu_j-\lambda_{\bar \jmath}\mu_{\bar \jmath})\phi_j(\ux)
=\int_{x>\ux}s_j^*(x)D\phi_j(x)dx+\mu_js^*_j(\ux)D\phi_j(\ux).
\end{equation*}
\end{remark}

Similarly to the stationary case, we can derive the fully discrete (forward in time) FPK equation for approximating \eqref{Dynamic MFG} $(ii)$:
\begin{equation}\label{eq:FPhD}
\left\{\begin{aligned}
\qquad &G^{\D,n+1}_{i,j}=(1-\lambda_jh)\sum_k G^{\D,n}_{k,j}\beta_i(x_k+hs_{k,j}^{*,n})+\lambda_{\bar \jmath}hG^{\D,n}_{i,\bjm},\\
&G^{\D,0}_{i,j}=\mathsf{G}^{\D}_{i,j},
\end{aligned}\right.
\end{equation}
where
$$
\mathsf{G}^\D_{0,j}=\frac{2}{\D x}\int_{\ux}^{\ux+\frac{\D x}{2}}\mathsf{g}_j(x)dx+\frac{\mu_j(0)}{\D x},\quad \mathsf{G}^\D_{i,j}=\frac{1}{\D x}\int_{x_i-\frac{\D x}{2}}^{x_i+\frac{\D x}{2}}\mathsf{g}_j(x)dx\quad \text{if}\quad i\geq 1.
$$
\subsection{The approximate equilibrium system}
We obtain the fully discrete scheme for the stationary Mean Field Game system
\begin{equation}\label{eq:fully_discrete_scheme}
\left\{\begin{aligned}
&(i)&&		V^{\D}_{i,j}=\sup_{c\in \mC^{\D}_{j}(x_i)}\Biggl\{
		hu(c)
		+(1-\rho h)\left[\lambda_j hV^{\D}_{i,\bjm} 
		+(1-\lambda_jh)\left(\sum_k \beta_k(x_i+hs_{i,j}(c))V^{\D}_{k,j}\right)\right]
		\Biggl\},\\[6pt]
&&& s_{i,j}(c)=rx_i+y_j-c,\quad s^*_{i,j}=rx_i+y_j-c^*_{i,j},\\
&&& c^*_{i,j}=\mathop{\arg\max}\limits_{c\in \mC^{\D}_{j}(x_i)}\left\{
		hu(c)+(1-\rho h)(1-\lambda_jh)\left(\sum_k \beta_k(x_i+hs_{i,j}(c))V^{\D}_{k,j}\right)\right\},\\[6pt]	
&(ii) &&		G^{\D}_{i,j}=(1-\lambda_jh) \left( \sum_k \beta_i\left(x_k+hs^*_{k,j}\right) G^{\D}_{k,j}\right) +\lambda_{\bar \jmath}hG^{\D}_{i,\bjm},\\
&&&\sum_k G^{\D}_{k,1}\D x+\sum_k G^{\D}_{k,2}\D x=1,
\end{aligned}\right.
\end{equation}
for $j=1,2$, $i\in\N$. To pin down equilibrium $r$ we set:
$$
K[\bG]=\sum_k x_k G^{\D}_{k,1}\D x+\sum_k x_k G^{\D}_{k,2}\D x,\quad N[\bG]=\frac{y_1\lambda_2}{\lambda_1+\lambda_2}+\frac{y_2\lambda_1}{\lambda_1+\lambda_2},
$$
where we use \eqref{Aiyagari} ($iii_A$) for the Aiyagari model and $K[\bG]=B$ for the Huggett model.
\section{The convergence analysis}
In this section, we study the convergence properties of the scheme for the HJB equation, i.e. ($iii$) in system \eqref{eq:fully_discrete_scheme} with a fixed interest $r$ such that $r<\rho$. The results in this section apply to systems with different coupling conditions: Huggett, Aiyagari etc. \par

For a fixed $\Delta=(h,\Dx)$, we rewrite the scheme \eqref{eq:HJB_vect} as
\begin{equation}\label{Scheme}
	\mathcal{F}^\D_j\left(x_i,\left[V^{\Delta}_{i,j},V^{\Delta}_{i,\bar \jmath}\right],V^{\Delta}_{\cdot,j}\right)=0\qquad i\in\NN,\,j=1,2,\,\bar{\jmath}=3-j.
\end{equation}
where $\mathcal{F}^\D_j:\mA^\Dx\times\R^2\times B(\mA^\Dx)\to \R$ is defined by 
\begin{equation}\label{def S}
\begin{aligned}
\mathcal{F}^\D_j(x_i,(\bq_j,\bq_{\bar \jmath}),\sU)={}&\rho \bq_j-(1-\rho h)\lambda_j(\bq_{\bar \jmath}-\bq_j)\\
-\sup_{c\in \mC^{\D}_{j}(x_i)}&\left\{
 u(c)+(1-\rho h)(1-\lambda_jh)\frac{1}{h}\left(\sum_k \beta_k(x_i+hs_{i,1}(c))\sU_k-\bq_j  \right)\right\},
\end{aligned}
\end{equation}
where the set of controls $ \mC^{\D}_{j}(x_i)$ is defined as in \cref{C set}.
\begin{remark}\label{C_j}
We will use the condition for step size $\Dx\sim h$ for $\D\to 0$. Notice in the fully discretized setting \cref{C set} becomes  
\beq\label{Eq:C_j}
\mC^{\D}_{j}(x_i)=\left[0, \frac{x_i-\ux}{h}+rx_i+y_j\right].
\eeq
By imposing $\Dx\sim h$, the constraint is binding only at $\ux$. For any $x>\ux$, $x=i(\D x)\Dx$ where $i(\D x)\to +\infty$ as $\D x\to 0$. Therefore, for any given $R>0$, if $\D$ is sufficiently small and $x_i>\ux$ then $[0,R)\subset \mC^{\D}_{j}(x_i)$. 
\end{remark}
Similarly we define the linearized scheme, with fixed $c\in \mathbb{R}^+$
\begin{equation}\label{def linear S}
\begin{aligned}
F^\D_j(x_i,c,(\bq_j,\bq_{\bar \jmath}),\sU)&=\rho \bq_j-(1-\rho h)\lambda_j(\bq_{\bar \jmath}-\bq_j)\\
-&\left(
 u(c)+(1-\rho h)(1-\lambda_jh)\frac{1}{h}\left(\sum_k \beta_k(x_i+hs_{i,1}(c))\sU_k-\bq_j  \right)\
\right)
\end{aligned}
\end{equation}
The scheme \eqref{def linear S} is used for solving linearized HJB equation, i.e. holding $c_{i,j}$ fixed in system \eqref{eq:fully_discrete_scheme} ($i$). This will be particularly useful when we discuss the Howard algorithm for solving the HJB equations. \par
Next, we consider the monotonicity of the scheme. 
\begin{lemma}\label{monotone_scheme}
For any  $\D$, $i\in\N$,
	bounded functions $\sU$, $\sV\in B(\mA^\Dx)$ such that $\sU_k\le \sV_k$  $\forall k\in\N$ and  $(\bq_1,\bq_2)$, $(\bm_1,\bm_2)\in\R^2$ such that $\displaystyle \theta:=\bq_j-\bm_j=\max_{k=1,2}\{\bq_k-\bm_k\}\ge 0$, then
\begin{equation}\label{S monotone}
\mathcal{F}^\D_j(x_i,(\bq_j,\bq_{\bar \jmath}), \mathsf{U}+\theta)-\mathcal{F}^\D_j(i,(\bm_j,\bm_{\bar \jmath}),\mathsf{V})\ge \rho\theta.
\end{equation}
\end{lemma}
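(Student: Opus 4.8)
The plan is to unfold the definition \eqref{def S} of $\mathcal{F}^\D_j$ at both argument triples and estimate the difference term by term, using the elementary properties of the scheme: the linear interpolation weights $\beta_k$ are nonnegative and sum to $1$, and the parameters $\rho h,\lambda_j h$ are assumed small enough (so $(1-\rho h),(1-\lambda_j h)\in[0,1]$), which makes each coefficient in the scheme a subprobability weight. First I would write
\begin{equation*}
\mathcal{F}^\D_j(x_i,(\bq_j,\bq_{\bar\jmath}),\sU+\theta)-\mathcal{F}^\D_j(x_i,(\bm_j,\bm_{\bar\jmath}),\sV)
= \rho(\bq_j-\bm_j) - (1-\rho h)\lambda_j\bigl[(\bq_{\bar\jmath}-\bm_{\bar\jmath})-(\bq_j-\bm_j)\bigr] - \bigl(S_1-S_2\bigr),
\end{equation*}
where $S_1,S_2$ denote the two suprema over $c\in\mC^\D_j(x_i)$. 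The first term is $\rho\theta$. For the second term, since $\theta=\bq_j-\bm_j=\max_k\{\bq_k-\bm_k\}\ge \bq_{\bar\jmath}-\bm_{\bar\jmath}$ and $\theta\ge 0$, the bracket $(\bq_{\bar\jmath}-\bm_{\bar\jmath})-(\bq_j-\bm_j)\le 0$, and the factor $(1-\rho h)\lambda_j\ge 0$, so this whole contribution is $\ge 0$. It remains to show $S_1-S_2\le 0$, i.e.\ $S_1\le S_2$.

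For the supremum terms, the key observation is that the feasible set $\mC^\D_j(x_i)$ is the same in both, so it suffices to compare the objectives pointwise in $c$. Fix $c\in\mC^\D_j(x_i)$. The difference of the two integrands is
\begin{equation*}
\Bigl[u(c)+(1-\rho h)(1-\lambda_jh)\tfrac1h\bigl(\textstyle\sum_k\beta_k(x_i+hs_{i,j}(c))(\sU_k+\theta)-\bq_j\bigr)\Bigr]
-\Bigl[u(c)+(1-\rho h)(1-\lambda_jh)\tfrac1h\bigl(\sum_k\beta_k(x_i+hs_{i,j}(c))\sV_k-\bm_j\bigr)\Bigr].
\end{equation*}
The $u(c)$ terms cancel; using $\sum_k\beta_k=1$ the $\theta$ inside the interpolation comes out as $+\theta$, and combining with $-\bq_j+\bm_j=-\theta$ inside the same parenthesis the two cancel exactly, leaving $(1-\rho h)(1-\lambda_jh)\tfrac1h\sum_k\beta_k(x_i+hs_{i,j}(c))(\sU_k-\sV_k)$. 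Since $\sU_k\le\sV_k$, $\beta_k\ge 0$, and $(1-\rho h)(1-\lambda_jh)\ge 0$, this is $\le 0$. Hence each objective in $S_1$ is $\le$ the corresponding objective in $S_2$ at the same $c$, so $S_1\le S_2$, giving $-(S_1-S_2)\ge 0$ and therefore the claimed inequality $\mathcal{F}^\D_j(x_i,(\bq_j,\bq_{\bar\jmath}),\sU+\theta)-\mathcal{F}^\D_j(x_i,(\bm_j,\bm_{\bar\jmath}),\sV)\ge\rho\theta$.

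I do not expect a serious obstacle here: the argument is the standard monotonicity verification for semi-Lagrangian schemes, and the only points requiring a little care are (i) making sure the constant $\theta$ shifts through the interpolation operator using $\sum_k\beta_k(x)=1$ and cancels against the $-\bq_j$, $+\bm_j$ terms, and (ii) checking the sign of the weakly-coupled term, which is precisely where the hypothesis that $\theta$ is the \emph{maximum} over $k=1,2$ of $\bq_k-\bm_k$ (rather than an arbitrary difference) is used. One should also note in passing the implicit smallness assumption $h\le\min\{1/\rho,1/\lambda_j\}$ under which the scheme coefficients are nonnegative; this is consistent with the regime $\Dx\sim h\to 0$ already invoked in Remark~\ref{C_j}.
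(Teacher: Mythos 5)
Your argument is correct and follows essentially the same route as the paper: both proofs pull the constant $\theta$ through the interpolation via $\sum_k\beta_k=1$ so that it cancels against $\bq_j=\bm_j+\theta$, use the hypothesis that $\theta$ is the maximum over $k=1,2$ to control the sign of the weakly-coupled term, and compare the two suprema pointwise in $c$ using $\sU_k\le\sV_k$ together with $\beta_k\ge 0$. The only cosmetic difference is that the paper lower-bounds $\mathcal{F}^\D_j(x_i,(\bq_j,\bq_{\bar\jmath}),\sU+\theta)$ directly rather than organizing the difference into three separate terms, and it likewise relies (implicitly) on the smallness condition $h\le\min\{1/\rho,1/\lambda_j\}$ that you correctly flag.
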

\begin{proof}
We recall that for all $i$, $j=1,2$ and $c \in \mC^{\D}_{j}$,
\beq\label{M pos}
\beta_k(x_i+hs_{i,1}(c))\ge 0, \quad \sum_k \beta_k(x_i+hs_{i,1}(c))=1.
\eeq
Assume that $\theta:=\bq_1-\bm_1\ge \bq_2-\bm_2$, hence $\bq_2-\bq_1\le \bm_2-\bm_1$. We first obtain from $\sU_k\le \sV_k$ for all $k$ and \cref{M pos} that
\beq\label{U leq V}
\begin{aligned}
&\sup_{c\in \mC^{\D}_{j}(x_i)}\left\{
	u(c)+(1-\rho h)(1-\lambda_1h)\frac{1}{h}\left(\sum_k \beta_k(x_i+hs_{i,1}(c))\sU_k-\bm_1\right)\right\}\\
\leq {}&
\sup_{c\in \mC^{\D}_{j}(x_i)}\left\{
	u(c)+(1-\rho h)(1-\lambda_1h)\frac{1}{h}\left(\sum_k \beta_k(x_i+hs_{i,1}(c))\sV_k-\bm_1\right)\right\}.
\end{aligned}
\eeq
By \cref{def S}, $\bq_1=\bm_1+\theta$ and \cref{M pos} we have
\begin{align*}
&\quad \mathcal{F}^\D_1(x_i,(\bq_1,\bq_2), \sU+\theta)\\
&=\rho(\bm_1+\theta)-(1-\rho h)\lambda_1(\bq_2-\bq_1)\\
       &-\sup_{\bc\in \mC^{\D}_{j}(x_i)}\left\{
	u(c)+\frac{(1-\rho h)(1-\lambda_1h)}{h}\left(\sum_k \beta_k(x_i+hs_{i,1}(c))(\sU_k+\theta)-(\bm_1+\theta)  \right)\right\}\\
	&\geq \rho \theta+\rho \bm_1-(1-\rho h)\lambda_1(\bm_2-\bm_1)\\
       &-\sup_{c\in \mC^{\D}_{j}(x_i)}\left\{
	u(c)+\frac{(1-\rho h)(1-\lambda_1h)}{h}\left(\sum_k \beta_k(x_i+hs_{i,1}(c))\sU_k-\bm_1\right)\right\}.
\end{align*}
We can then apply \cref{U leq V} to obtain \cref{S monotone}. We proceed similarly if $\theta:=\bq_2-\bm_2\ge \bq_1-\bm_1$. Hence monotonicity of the scheme is proved.\par
 \end{proof}
 
We now derive the discrete comparison principle for the scheme with the monotonicity property. 
\begin{definition}\label{def:sol_scheme}
	We say that $\mathbf{U}=(\bU_1,\bU_2)=(U_{\cdot,1},U_{\cdot,2})\in B(\mA^\Dx)^2$ is a subsolution (respectively, a supersolution) of \eqref{Scheme} if 
	\[	\mathcal{F}^\D_j(x_i,[U_{i,j},U_{i,\bar \jmath}],U_{\cdot,j})\le 0 \quad\text{(resp., $\ge 0$)}\qquad \forall i\in\N, \,j=1,2.\]
\end{definition}
\begin{proposition}\label{prop:comparison}
	If $\bU, \bV\in B(\mA^\Dx)^2$ are a bounded subsolution and, respectively, a bounded supersolution of \eqref{Scheme}, then $\bU\le \bV$, i.e. $U_{i,j}\le V_{i,j}$ $\forall i\in\N$, $j=1,2$.
\end{proposition}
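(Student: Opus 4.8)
The plan is to mimic the classical Barles--Souganidis/Crandall--Ishii argument for discrete monotone schemes, using the monotonicity quantified in \cref{monotone_scheme} as the essential ingredient. Suppose for contradiction that $\theta := \sup_{i\in\N,\,j=1,2}(U_{i,j}-V_{i,j}) > 0$. Since $\bU$ and $\bV$ are bounded this supremum is finite, though it may not be attained; I will first deal with the case where it is attained, say at some index $i_0$ and some $j\in\{1,2\}$, so that $U_{i_0,j}-V_{i_0,j}=\theta$ and, by maximality over both components, $\theta=\max_{k=1,2}(U_{i_0,k}-V_{i_0,k})$.

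\textbf{Applying monotonicity.} With $\bq_j := U_{i_0,j}$, $\bm_j := V_{i_0,j}$ (so the hypothesis $\theta = \bq_j-\bm_j = \max_k(\bq_k-\bm_k)\ge 0$ of \cref{monotone_scheme} holds), and with $\sU := U_{\cdot,j}$, $\sV := V_{\cdot,j}$, I need $\sU_k\le \sV_k+\theta$ for all $k$; but this is exactly $U_{k,j}-V_{k,j}\le\theta$, which holds by definition of $\theta$ as the supremum. Thus \cref{monotone_scheme} gives
\begin{equation*}
\mathcal{F}^\D_j\!\left(x_{i_0},(\bq_j,\bq_{\bar\jmath}),\,V_{\cdot,j}+\theta\right)-\mathcal{F}^\D_j\!\left(x_{i_0},(\bm_j,\bm_{\bar\jmath}),\,V_{\cdot,j}\right)\ge \rho\theta .
\end{equation*}
Now I use that $\bU$ is a subsolution: $\mathcal{F}^\D_j(x_{i_0},(\bq_j,\bq_{\bar\jmath}),U_{\cdot,j})\le 0$. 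Since $U_{k,j}\le V_{k,j}+\theta$ for all $k$ and $\mathcal{F}^\D_j$ is nondecreasing in its last (function) argument in the componentwise sense — visible directly from \eqref{def S}, because each coefficient $\beta_k(x_{i_0}+hs_{i_0,j}(c))\ge 0$ and the $\sup$ over $c$ preserves monotonicity — we get $\mathcal{F}^\D_j(x_{i_0},(\bq_j,\bq_{\bar\jmath}),V_{\cdot,j}+\theta)\le \mathcal{F}^\D_j(x_{i_0},(\bq_j,\bq_{\bar\jmath}),U_{\cdot,j})\le 0$. Combining with the supersolution property $\mathcal{F}^\D_j(x_{i_0},(\bm_j,\bm_{\bar\jmath}),V_{\cdot,j})\ge 0$, the displayed inequality yields $0 - 0 \ge 0 \ge \rho\theta$, i.e. $\rho\theta\le 0$. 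Since $\rho>0$ by \eqref{standing assump}(i), this forces $\theta\le 0$, contradicting $\theta>0$.

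\textbf{The main obstacle: non-attainment of the sup.} The genuine difficulty is that $\mA^\Dx$ is an infinite grid (indexed by $i\in\N$), so the supremum defining $\theta$ need not be attained. To handle this I would argue by near-maximizers: pick $(i_0,j)$ with $U_{i_0,j}-V_{i_0,j}\ge \theta-\eta$ for small $\eta>0$. The monotonicity estimate is robust — the hypothesis of \cref{monotone_scheme} only needs $\bq_j-\bm_j=\max_k(\bq_k-\bm_k)$, and one can replace the role of $\theta$ in the shift argument by $\theta$ (still an upper bound for all $U_{k,j}-V_{k,j}$) while the quantity $\bq_j-\bm_j$ is at least $\theta-\eta$. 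Running the same chain of inequalities then produces $\rho(\bq_j-\bm_j)\le O(\eta)$ — more precisely one tracks how the $\eta$-slack enters the estimate $\mathcal{F}^\D_j(\cdots,V_{\cdot,j}+\theta)-\mathcal{F}^\D_j(\cdots,V_{\cdot,j})\ge \rho(\bq_j-\bm_j)$, which follows from the proof of \cref{monotone_scheme} with the second-argument gap being $\bq_j-\bm_j\ge\theta-\eta$ rather than exactly $\theta$. Letting $\eta\to 0$ gives $\rho\theta\le 0$, again a contradiction. One should also note that a maximizing component $j$ can be chosen along the subsequence consistently (there are only two), so no further subsequence extraction in $j$ is needed. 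An alternative, perhaps cleaner, route is to observe that under the standing assumptions the relevant value functions decay (or are controlled) at infinity so that the sup is effectively over a compact set; but the near-maximizer argument is self-contained and is the one I would write out.
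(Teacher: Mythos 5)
Your proof is correct and follows essentially the same route as the paper: argue by contradiction from $\theta=\sup(U_{i,j}-V_{i,j})>0$, invoke \cref{monotone_scheme} together with the sub/supersolution inequalities to force $\rho\theta\le 0$, and handle non-attainment of the supremum on the infinite grid via near-maximizers whose $O(\eta)$ (resp.\ $O(\theta_n-\theta)$) error vanishes for fixed $h$. One small slip: $\mathcal{F}^\D_j$ is non\emph{increasing} (not nondecreasing) in its last argument, since $\sU_k$ enters with nonnegative coefficients inside a supremum that is subtracted; the inequality $\mathcal{F}^\D_j(x_{i_0},(\bq_j,\bq_{\bar\jmath}),V_{\cdot,j}+\theta)\le \mathcal{F}^\D_j(x_{i_0},(\bq_j,\bq_{\bar\jmath}),U_{\cdot,j})$ you draw from it is nonetheless the correct one.
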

\begin{proof}
	Assume by contradiction that $\theta=\sup_{i\in\N}\max_{j=1,2}\{U_{i,j}-V_{i,j}\}>0$. Consider a sequence $\theta_n\to \theta$ and $i_n\in\N$, $j_n\in \{1,2\}$ such that
	$
	\theta_n=U_{i_n,j_n}-V_{i_n,j_n}=\max_{j=1,2}\{U_{i_n,j}-V_{i_n,j}\}
	$.
	Exploiting  Def. \ref{def:sol_scheme},
$$
	 \mathcal{F}^\D_{j_n}(x_{i_n},(U_{i_n,j_n},U_{i_n,\bjm_n}),U_{\cdot,j_n})-\mathcal{F}^\D_{j_n}(x_{i_n},(V_{i_n,j_n},V_{i_n,\bjm_n}),V_{\cdot,j_n})\leq 0.
$$		
We then apply the monotonicity property \eqref{S monotone}, more specifically replacing $\sU$ by $U_{\cdot,j_n}-\theta$ and $\sV$ by $V_{\cdot,j_n}$ in \eqref{S monotone}, recalling $U_{\cdot,j_n}-\theta\leq V_{\cdot,j_n}$ :
\begin{align*}		
& \mathcal{F}^\D_{j_n}(x_{i_n},(U_{i_n,j_n},U_{i_n,\bjm_n}),U_{\cdot,j_n})-\mathcal{F}^\D_{j_n}(x_{i_n},(V_{i_n,j_n},V_{i_n,\bjm_n}),V_{\cdot,j_n})\\
={}&\mathcal{F}^\D_{j_n}(x_{i_n},(V_{i_n,j_n}+\theta_n,U_{i_n,\bjm_n}),U_{\cdot,j_n})-\mathcal{F}^\D_{j_n}(x_{i_n},(V_{i_n,j_n},V_{i_n,\bjm_n}),V_{\cdot,j_n})\\
\geq {}&\mathcal{F}^\D_{j_n}(x_{i_n},(V_{i_n,j_n}+\theta,U_{i_n,\bjm_n}+\theta-\theta_n),U_{\cdot,j_n}-\theta+\theta)-\mathcal{F}^\D_{j_n}(x_{i_n},(V_{i_n,j_n},V_{i_n,\bjm_n}),V_{\cdot,j_n})\\
&+(1-\rho h)(1-\lambda_1h)\frac{\rho}{h}(\theta_n-\theta)\\
\ge {}&\rho \theta+(1-\rho h)(1-\lambda_1h)\frac{\rho}{h}(\theta_n-\theta).		
\end{align*}
We get a contradiction by sending $\theta_n\to \theta$ for any fixed $h$. Therefore we have shown $\bU\le \bV$.	
\end{proof}

To solve the linear system with $c(x)\geq 0$:
\begin{equation}
		\rho v_j(x) = u(c) +(rx +y_j- c)Dv_j +\lambda_j(v_{\bar \jmath}(x)-v_j(x)),\quad j\in \{1,2\}\quad\text{and}\quad\bar \jmath=3-j,
	\end{equation}
we introduce 
\beq\label{linear scheme}
F^\D_j(x_i,\mathsf{c}_{i,j},[U_{i,j},U_{i,\bar{\jmath}}],U_{\cdot,j})=0 \qquad \forall i\in\N, \,j=1,2,\,\bar{\jmath}=3-j.
\eeq
This can be obtained from \cref{eq:fully_discrete_scheme} by holding the consumption policy $\mathsf{c}_{\cdot,j}\in \mC_j^h$ fixed rather than taking a $\sup$. The comparison principle holds also for $F^\D_j$. 
\begin{proposition}\label{linear comparison}
If $\bU, \bV\in B(\mA^\Dx)^2$ are a subsolution and, respectively, a supersolution of \eqref{linear scheme}, in the sense
\[	F^\D_j(x_i,\mathsf{c}_{i,j},[U_{i,j},U_{i,\bar{\jmath}}],U_{\cdot,j})\le 0 \quad\text{(resp., $\ge 0$)}\qquad \forall i\in\N, \,j=1,2,\,\bar{\jmath}=3-j,\]
then $\bU\le \bV$. 
\end{proposition}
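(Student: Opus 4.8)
The plan is to reproduce the proof of Proposition~\ref{prop:comparison} essentially word for word, with the nonlinear operator $\mathcal{F}^\D_j$ replaced by the linear operator $F^\D_j$ of \eqref{def linear S}. The only new ingredient needed is the linear analogue of the monotonicity Lemma~\ref{monotone_scheme}, namely: for $h$ small enough that $1-\rho h\ge 0$ and $1-\lambda_j h\ge 0$, for any $\D$, any $i\in\N$, any admissible fixed control $c$ (i.e. $c\in\mC^\D_j(x_i)$), any bounded $\sU\le\sV$ on $\mA^\Dx$, and any $(\bq_1,\bq_2),(\bm_1,\bm_2)\in\R^2$ with $\theta:=\bq_j-\bm_j=\max_{k=1,2}\{\bq_k-\bm_k\}\ge 0$,
\[
F^\D_j(x_i,c,(\bq_j,\bq_{\bar\jmath}),\sU+\theta)-F^\D_j(x_i,c,(\bm_j,\bm_{\bar\jmath}),\sV)\ \ge\ \rho\theta .
\]

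To prove this I would simply run the computation in the proof of Lemma~\ref{monotone_scheme} with the supremum over $c$ removed --- which, if anything, makes it shorter, since the monotonicity of the supremum plays no role. Substituting $\bq_j=\bm_j+\theta$ into \eqref{def linear S} and using $\sum_k\beta_k(x_i+hs_{i,j}(c))=1$ from \eqref{M pos}, the additive constant $\theta$ inside the interpolation bracket telescopes and one is left with $\sum_k\beta_k(x_i+hs_{i,j}(c))\sU_k-\bm_j$; since $\beta_k\ge 0$, $(1-\rho h)(1-\lambda_j h)\ge 0$ and $\sU\le\sV$, this term only grows when $\sU$ is replaced by $\sV$. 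The term $\rho\bq_j=\rho\bm_j+\rho\theta$ supplies the $\rho\theta$. Finally, because $\theta$ is the \emph{maximum} of the two differences, $\bq_{\bar\jmath}-\bq_j=(\bq_{\bar\jmath}-\bm_{\bar\jmath})-\theta\le\bm_{\bar\jmath}-\bm_j$, whence $-(1-\rho h)\lambda_j(\bq_{\bar\jmath}-\bq_j)\ge-(1-\rho h)\lambda_j(\bm_{\bar\jmath}-\bm_j)$. Adding the three contributions yields the claimed inequality.

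With the monotonicity in hand I would argue by contradiction exactly as in Proposition~\ref{prop:comparison}. Assume $\theta:=\sup_{i\in\N}\max_{j=1,2}\{U_{i,j}-V_{i,j}\}>0$ and pick $\theta_n\to\theta$ and $i_n\in\N$, $j_n\in\{1,2\}$ with $\theta_n=U_{i_n,j_n}-V_{i_n,j_n}=\max_{j}\{U_{i_n,j}-V_{i_n,j}\}$. Because $\bU$ and $\bV$ are a sub- and a supersolution of one and the \emph{same} linear scheme \eqref{linear scheme} --- so the frozen policy $\mathsf{c}_{\cdot,j_n}$ is common to both inequalities ---
\[
F^\D_{j_n}\bigl(x_{i_n},\mathsf{c}_{i_n,j_n},(U_{i_n,j_n},U_{i_n,\bjm_n}),U_{\cdot,j_n}\bigr)-F^\D_{j_n}\bigl(x_{i_n},\mathsf{c}_{i_n,j_n},(V_{i_n,j_n},V_{i_n,\bjm_n}),V_{\cdot,j_n}\bigr)\le 0 .
\]
Following the bookkeeping of Proposition~\ref{prop:comparison} --- write the left-hand triple as $(V_{i_n,j_n}+\theta_n,U_{i_n,\bjm_n},U_{\cdot,j_n})$, shift the $\bq$-arguments by $\theta_n-\theta$ (which changes $F^\D_{j_n}$ by an explicit affine function of $\theta_n-\theta$, hence by a quantity that is $o(1)$ as $n\to\infty$ for fixed $h$), so that the maximum of the two differences becomes exactly $\theta$, and then apply the monotonicity inequality above with $\sU=U_{\cdot,j_n}-\theta\le V_{\cdot,j_n}=\sV$ --- one arrives at $0\ge\rho\theta+o(1)$, which contradicts $\rho>0$. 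Hence $\bU\le\bV$.

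The argument is routine and strictly easier than that of Proposition~\ref{prop:comparison}; the only points worth stressing are that the monotonicity step relies on the \emph{same} consumption policy $\mathsf{c}_{\cdot,j}$ appearing in the sub- and supersolution inequalities (guaranteed by the definition of sub/supersolution of \eqref{linear scheme}, cf. Definition~\ref{def:sol_scheme}) and that $h$ is taken small enough for $1-\rho h$ and $1-\lambda_j h$ to be nonnegative, as is assumed for the discretization.
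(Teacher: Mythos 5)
Your proof is correct and follows exactly the route the paper intends: the paper omits the proof of this proposition, stating only that it is ``very similar to Proposition~\ref{prop:comparison}'', and your argument is precisely that adaptation --- a linear analogue of the monotonicity lemma (simpler, since no supremum is involved) followed by the same contradiction argument with the frozen control $\mathsf{c}_{i,j}$ common to both inequalities. No gaps; nothing further to add.
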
	
We omit the proof since it is very similar to \cref{prop:comparison}. \par
Now we describe the policy iteration method (Howard algorithm) for the fully discrete HJB equation \eqref{eq:HJBhD_1}. Given initial guess $(s_{\cdot,1}^{(0)},s_{\cdot,2}^{(0)})$, iterate for each $\iota \geq0$:

\noindent (i)  \textit{Policy evaluation.} Solve
\begin{equation}\label{eq:policy_ev}
     V_{i,j}^{\Delta, \i} = hu(c_{i,j}^{\i})+(1-\rho h)\left[\lambda_jhV_{i,\bar{\jmath}}^{\Delta,\i}+(1-\lambda_jh)\left(\sum_k\beta_k\left(x_i+hs^\i_{i,j}\right)V_{k,j}^{\Delta,\i}\right)\right].
\end{equation}\\
\noindent (ii) \textit{Policy update.} 

\begin{equation}\label{update_c}
\begin{aligned}
        &c^{(\iota+1)}_{i,j}=\mathop{\arg\max}\limits_{c\in \mC^{\D}_{j}(x_i)}\left\{
		hu(c)+(1-\rho h)(1-\lambda_jh)\left(\sum_k \beta_k(x_i+hs_{i,j}(c))V^{\D,\i}_{k,j}\right)\right\},\\
&s^{(\iota+1)}_{i,j}=s_{i,j}(c^{(\iota+1)}).		
\end{aligned}
\end{equation}
We now use the comparison principle in \cref{prop:comparison} and \cref{linear comparison} to show the global convergence of the Howard algorithm. This also gives the existence and uniqueness of a solution to \cref{Scheme}.
\begin{theorem}\label{Howard theo}
	Let $\bV^\i=(\bV^\i_1, \bV^\i_2)$, $\bV^\i_j\in  B(\mA^\Dx)$, be the sequence generated by the policy iteration method \eqref{eq:policy_ev}-\eqref{update_c}. Then
	\begin{equation}\label{eq:est_PI}
		\bV^\i \le \bV^{(\iota+1)} \qquad \forall \iota \in \N.
	\end{equation}
Moreover, $\lim_{\iota \to \infty}\bV^\i=\bV$ such that $\bV=(\bV_1,\bV_2)\in B(\mA^\Dx)^2$ is the unique solution of \cref{Scheme}.
\end{theorem}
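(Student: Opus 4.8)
The plan is to run the classical Howard (policy-iteration) argument, leaning on the two discrete comparison principles already proved: \cref{linear comparison} for the linearised scheme $F^\D_j$ and \cref{prop:comparison} for the full scheme $\mathcal F^\D_j$. First I would check that the iteration is well defined: for an admissible initial guess, and at each step, with consumptions $c^{(\iota)}_{i,j}>0$, the right-hand side of \eqref{eq:policy_ev} defines a map on $B(\mA^\Dx)^2$ that is, in the sup-norm, a contraction of ratio $(1-\rho h)<1$ (because $\lambda_j h+(1-\lambda_j h)\sum_k\beta_k(\cdot)=1$), so by Banach's theorem \eqref{eq:policy_ev} has a unique bounded solution $\bV^\i$; uniqueness alone also follows from \cref{linear comparison}. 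I would also record that the supremum in \eqref{update_c} is attained at an interior point $c^{(\iota+1)}_{i,j}\in(0,C_i]$, where $C_i=(x_i-\ux)/h+rx_i+y_j$, since the objective is continuous on the compact interval $\mC^\D_j(x_i)=[0,C_i]$ and tends to $-\infty$ as $c\to 0^+$ (because $u(c)\to-\infty$); in particular $c^{(\iota+1)}_{i,j}>0$ and the next evaluation step is again well posed. Finally, after rescaling, $c^{(\iota+1)}_{i,j}$ is exactly a minimiser of $c\mapsto F^\D_j(x_i,c,[V^\i_{i,j},V^\i_{i,\bjm}],V^\i_{\cdot,j})$, hence realises the supremum defining $\mathcal F^\D_j(x_i,[V^\i_{i,j},V^\i_{i,\bjm}],V^\i_{\cdot,j})$.

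Next I would prove the monotonicity \eqref{eq:est_PI}. Since $\bV^\i$ solves \eqref{eq:policy_ev}, i.e. $F^\D_j(x_i,c^{(\iota)}_{i,j},[V^\i_{i,j},V^\i_{i,\bjm}],V^\i_{\cdot,j})=0$, and $c^{(\iota+1)}_{i,j}$ minimises $F^\D_j(x_i,\cdot,[V^\i_{i,j},V^\i_{i,\bjm}],V^\i_{\cdot,j})$, we get $F^\D_j(x_i,c^{(\iota+1)}_{i,j},[V^\i_{i,j},V^\i_{i,\bjm}],V^\i_{\cdot,j})\le 0$; that is, $\bV^\i$ is a subsolution of the linear scheme \eqref{linear scheme} with the updated policy $c^{(\iota+1)}$, of which $\bV^{(\iota+1)}$ is the solution, so \cref{linear comparison} gives $\bV^\i\le\bV^{(\iota+1)}$. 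For a uniform upper bound I would note that each $\bV^{(\iota+1)}$ is a subsolution of the full scheme \eqref{Scheme}, because $\mathcal F^\D_j(x_i,[V^{(\iota+1)}_{i,j},V^{(\iota+1)}_{i,\bjm}],V^{(\iota+1)}_{\cdot,j})=\inf_c F^\D_j(x_i,c,\cdot)\le F^\D_j(x_i,c^{(\iota+1)}_{i,j},\cdot)=0$, whereas the constant $(0,0)$ is a supersolution of \eqref{Scheme}, since $\mathcal F^\D_j(x_i,(0,0),\mathbf 0)=-\sup_{c\in[0,C_i]}u(c)=-u(C_i)>0$ ($u$ increasing and $u(C_i)<0$ because $\gamma>1$). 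By \cref{prop:comparison}, $\bV^{(\iota+1)}\le(0,0)$. Hence $\{\bV^\i\}$ is nondecreasing and bounded (below by the finite $\bV^{(0)}$, above by $0$), so it converges pointwise to some $\bV=(\bV_1,\bV_2)\in B(\mA^\Dx)^2$.

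The last step is to pass to the limit. By the first paragraph, $\mathcal F^\D_j(x_i,[V^\i_{i,j},V^\i_{i,\bjm}],V^\i_{\cdot,j})=F^\D_j(x_i,c^{(\iota+1)}_{i,j},[V^\i_{i,j},V^\i_{i,\bjm}],V^\i_{\cdot,j})$, while the evaluation identity at step $\iota+1$ reads $F^\D_j(x_i,c^{(\iota+1)}_{i,j},[V^{(\iota+1)}_{i,j},V^{(\iota+1)}_{i,\bjm}],V^{(\iota+1)}_{\cdot,j})=0$. For fixed $c$ the map $F^\D_j(x_i,c,(\bq_j,\bq_{\bjm}),\sU)$ is affine in $(\bq_j,\bq_{\bjm},\sU)$ with bounded coefficients and, for each fixed $i$, depends only on the finitely many grid values $V_{k,j}$ with $x_k\in[\ux,x_i+h(rx_i+y_j)]$; subtracting the two identities (the same $c=c^{(\iota+1)}_{i,j}$, so the nonlinear term and the coefficients $\beta_k(x_i+hs_{i,j}(c))$ cancel) expresses $\mathcal F^\D_j(x_i,[V^\i_{i,j},V^\i_{i,\bjm}],V^\i_{\cdot,j})$ as a fixed finite linear combination of the increments $V^\i_{k,j}-V^{(\iota+1)}_{k,j}$, which tend to $0$. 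Thus $\mathcal F^\D_j(x_i,[V^\i_{i,j},V^\i_{i,\bjm}],V^\i_{\cdot,j})\to 0$ as $\iota\to\infty$. Since $\mathcal F^\D_j(x_i,\cdot,\cdot)$ is continuous in its finitely many real arguments — the supremum is over the compact interval $[0,C_i]$, on which the integrand is jointly uniformly continuous away from $c=0$ and tends to $-\infty$ uniformly on compacta as $c\to 0^+$ — the monotone convergence $\bV^\i\uparrow\bV$ yields $\mathcal F^\D_j(x_i,[V_{i,j},V_{i,\bjm}],V_{\cdot,j})=0$ for every $i,j$, i.e. $\bV$ solves \eqref{Scheme}. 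Uniqueness of a bounded solution of \eqref{Scheme} is then immediate from \cref{prop:comparison}, so $\bV$ is the unique solution and $\lim_{\iota\to\infty}\bV^\i=\bV$.

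The hard part is precisely this final passage to the limit: one must make the continuity of the discrete Bellman operator $\mathcal F^\D_j$ rigorous in the presence of the CRRA singularity $u(c)\to-\infty$ as $c\to 0^+$ and of the unbounded grid $\mA^\Dx$ (this is where one uses that each scheme equation involves only finitely many nodes and that the supremum stays bounded away from $c=0$), and verify that the ``affine difference'' computation genuinely forces $\mathcal F^\D_j(x_i,[V_{i,j},V_{i,\bjm}],V_{\cdot,j})=0$, so that the limit closes the Bellman equation rather than producing only a one-sided (sub- or super-solution) inequality.
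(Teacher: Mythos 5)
Your proposal is correct and follows essentially the same route as the paper: $\bV^{(\iota)}$ is a subsolution of the linearised scheme \eqref{linear scheme} with the updated policy (and of the full scheme \eqref{Scheme}), $(0,0)$ is a supersolution, and the two discrete comparison principles (\cref{linear comparison}, \cref{prop:comparison}) give the monotonicity \eqref{eq:est_PI} and the uniform upper bound. You actually go further than the paper at the final step: the paper stops at ``the monotone bounded sequence converges to some $\bV$'' without verifying that the limit solves \eqref{Scheme}, whereas your affine-difference computation together with the Lipschitz continuity of the discrete Bellman operator $\mathcal{F}^\D_j$ in its finitely many arguments closes that gap and fully justifies the identification of the limit with the unique solution.
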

\begin{proof}
The uniqueness of solution to \cref{Scheme} comes from \cref{prop:comparison}. We observe that from \cref{eq:policy_ev},
\begin{align*}
     V_{i,j}^{\Delta, \i} 
     \leq  \sup_{c\in \mC^{\D}_{j}}\left\{hu(c)+(1-\rho h)\left[\lambda_jhV_{i,\bar{\jmath}}^{\Delta,\i}+(1-\lambda_jh)\left(\sum_k\beta_k(x_i+hs_{i,j}(c))V_{k,j}^{\Delta,\i}\right)\right]\right\},
\end{align*}
hence $\bV^\i$ is a subsolution of \cref{Scheme}. It is clear that $(0,0)$ is a supersolution to \cref{Scheme}, hence  by \cref{prop:comparison} $\bV^\i\leq 0$ for all $\iota$.
	Moreover, with \cref{update_c}
	\begin{align*}
		V_{i,j}^{\Delta, \i} 
	\leq 	hu(c_{i,j}^{(\iota+1)})+(1-\rho h)\left[\lambda_jhV_{i,\bar{\jmath}}^{\Delta,\i}+(1-\lambda_jh)\left(\sum_k\beta_k\left(x_i+hs_{i,j}^{(\iota+1)}\right)V_{k,j}^{\Delta,\i}\right)\right].
	\end{align*}
Meanwhile
\begin{align*}
		V_{i,j}^{\Delta, (\iota+1)}=hu(c_{i,j}^{(\iota+1)})
		+(1-\rho h)\left[\lambda_jhV_{i,\bar{\jmath}}^{\Delta,(\iota+1)}+(1-\lambda_jh)\left(\sum_k\beta_k\left(x_i+hs_{i,j}^{(\iota+1)}\right)V_{k,j}^{\Delta,(\iota+1)}\right)\right].
	\end{align*}
	Therefore, $V^\i_{i,j}$ and  $V^{(\iota+1)}_{i,j}$ are sub and supersolution of the linear equation 
	$$
	F_j^\D(x_i,c_{i,j}^{(\iota+1)},[U_{i,j},U_{i,\bar{\jmath}}],U_{\cdot,j})=0,
	$$
it follows from \cref{linear comparison} that $\bV^\i \le \bV^{(\iota+1)}$. With $\bV^\i \le  0$ we conclude that $\bV^\i $ converges to some $\bV$. 
\end{proof}
	
We now consider additional properties of the scheme $\mathcal{F}^\D$.
\begin{proposition}\label{prop_scheme}
Assume that $\D x\sim h$ for $\D\to 0$. The scheme $\mathcal{F}^\D$, besides the monotonicity property established in \cref{monotone_scheme}, satisfies the following properties:
\begin{itemize}
\item[]{\em  Stability: } The unique  solution $\bV=(V^\D_{\cdot,1},V^\D_{\cdot,2})$ to \cref{Scheme} is uniformly bounded in $\D$.
	
	\item[]{\em  Consistency: } For $j=1,2$ and for any
	smooth function $\phi=(\phi_1,\phi_2)$
	\begin{align*}
			\limsup_{{\D\to 0}\atop{x_i\to x} } \mathcal{F}^\D_j(x_i,\phi(x_i),\phi_{j})  & \leq  \rho \phi_{j}(x)- H(x,y_j,D\phi_{j})-\lambda_j(\phi_{j}(x)-\phi_{\bar \jmath}(x))
		 \quad \forall x\in(\ux,\infty),\\
		\liminf_{{\D\to 0}\atop{x_i\to x}} \mathcal{F}^\D_j(x_i,\phi(x_i),\phi_{j})    &\geq \rho \phi_{j}(x)- H(x,y_j,D\phi_{j})-\lambda_j(\phi_{j}(x)-\phi_{\bar \jmath}(x)) 
	    \quad \forall x\in[\ux,\infty).
	\end{align*}
	\end{itemize}
\end{proposition}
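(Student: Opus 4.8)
\textbf{Proof proposal for Proposition \ref{prop_scheme}.}

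The plan is to establish the two properties separately, since stability is essentially a packaging of the discrete comparison principle already proved, while consistency is a Taylor-expansion computation that must carefully exploit the assumption $\Dx\sim h$ and \cref{C_j}.

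For \emph{stability}, the idea is to produce $\D$-independent barrier functions for the discrete scheme. I would check that the constant vector $(0,0)$ is a discrete supersolution of \cref{Scheme}: plugging $\bq_j=\bq_{\bar\jmath}=0$ and $\sU\equiv 0$ into \cref{def S}, the sum $\sum_k\beta_k(\cdot)\cdot 0-0=0$, so $\mathcal{F}^\D_j=-\sup_{c\in\mC^\D_j(x_i)}u(c)$; since $\mC^\D_j(x_i)$ is the bounded interval \cref{Eq:C_j} and $u(c)=c^{1-\gamma}/(1-\gamma)<0$ for $\gamma>1$, we get $\sup_c u(c)\le 0$, hence $\mathcal{F}^\D_j\ge 0$, a supersolution. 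For the lower barrier I would take the constant $\widehat{\su}_j$ from \cref{sub sol} (the discrete analogue of \cref{comparison1d}): using $\sum_k\beta_k=1$, the difference quotient term vanishes, and one checks $\mathcal{F}^\D_j(x_i,(\widehat{\su}_j,\widehat{\su}_{\bar\jmath}),\widehat{\su}_j)=\rho\widehat{\su}_j-\sup_c\{u(c)\}\le \rho\widehat{\su}_j - \tfrac{(r\ux+y_j)^{1-\gamma}}{1-\gamma}\le 0$ (by \cref{H min}-type reasoning: $\sup_{c\ge 0}\{u(c)+0\cdot(\ldots)\}$ would be $+\infty$ without the constraint, but with $p=0$ the relevant bound is that the optimal $c$ is finite — here I actually only need $\sup_c u(c) \ge \rho\widehat\su_j$, i.e. the constant itself lies below a fixed achievable consumption utility; this follows from $\rho\ux+y_j>0$ so that $(r\ux+y_j)$ is bounded below by a positive constant uniformly, giving $\widehat\su_j$ a finite $\D$-independent value). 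Then \cref{prop:comparison} sandwiches $\widehat{\su}_j\le V^\D_{\cdot,j}\le 0$ uniformly in $\D$.

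For \emph{consistency}, fix $x\in(\ux,\infty)$ (for the first inequality; $x=\ux$ allowed for the second) and a smooth $\phi=(\phi_1,\phi_2)$, and take $x_i\to x$, $\D\to 0$. By \cref{C_j}, since $\Dx\sim h$ and $x>\ux$, for $\D$ small the constraint set $\mC^\D_j(x_i)$ contains any fixed compact $[0,R]$, so near the optimum the scheme ``sees'' the unconstrained problem; this is the step that legitimises comparing with $H(x,y_j,D\phi_j)$ rather than a truncated Hamiltonian. Now expand: for $x_i+hs_{i,j}(c)$ in a grid cell $[x_k,x_{k+1}]$, linear interpolation gives $\sum_k\beta_k(x_i+hs_{i,j}(c))\phi_j(x_k) = \phi_j(x_i+hs_{i,j}(c)) + O(\Dx^2)\|\phi_j''\|_\infty$, and then $\phi_j(x_i+hs_{i,j}(c))-\phi_j(x_i) = h\,s_{i,j}(c)\,D\phi_j(x_i) + O(h^2)$. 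Dividing by $h$ and using $\Dx^2/h = O(h)\to 0$, the bracketed term in \cref{def S} converges to $s_{i,j}(c)D\phi_j(x)= (rx+y_j-c)D\phi_j(x)$, uniformly for $c$ in compacts. Taking $\sup_{c}$ and using that the prefactor $(1-\rho h)(1-\lambda_j h)\to 1$, the $\sup$ converges to $\sup_{c\ge 0}\{u(c)+(rx+y_j-c)D\phi_j(x)\} = H(x,y_j,D\phi_j(x))$, and the remaining terms $\rho\phi_j(x_i)-(1-\rho h)\lambda_j(\phi_{\bar\jmath}(x_i)-\phi_j(x_i))\to \rho\phi_j(x)-\lambda_j(\phi_{\bar\jmath}(x)-\phi_j(x))$. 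The $\limsup/\liminf$ asymmetry and the two different domains ($(\ux,\infty)$ vs $[\ux,\infty)$) come precisely from the behaviour at the boundary: at $x=\ux$ the constraint $\mC^\D_j(\ux)=[0,rx_i+y_j+(x_i-\ux)/h]$ may genuinely restrict the admissible $c$ from above by a finite amount, so the discrete $\sup$ can only be $\le$ the full $H$, yielding the $\liminf\ge$ inequality there; whereas approaching from the interior one can also only guarantee the $\le$ for $\limsup$ because of this one-sided truncation in nearby boundary cells.

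\textbf{Main obstacle.} The delicate point is the consistency estimate \emph{uniformly in $c$} up to the (moving) boundary of $\mC^\D_j(x_i)$: one must verify that the error terms $O(\Dx^2/h)$ and $O(h)$ from the Taylor expansion are controlled uniformly over $c\in\mC^\D_j(x_i)$ even though this interval's upper endpoint $\sim (x_i-\ux)/h$ blows up as $\D\to 0$. The resolution is that by coercivity \cref{coerc} of $H$ the near-optimal consumptions stay in a fixed compact set (the CRRA utility $u(c)\to -\infty$ slowly but the linear term $-cD\phi_j$ dominates for large $c$ when $D\phi_j>0$, and when $D\phi_j\le 0$ the supremum is attained at a bounded $c$ anyway because $u$ is bounded above by $0$), so one may restrict the $\sup$ to a $\D$-independent compact $[0,R]$ without changing it in the limit; this reduces everything to the standard local estimate. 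The subcases according to the sign of $rx+y_j$, as in \cref{coerc} and in the proof of \cref{comparison}, need to be handled to pin down $R$, but this is routine once the coercivity dichotomy is invoked.
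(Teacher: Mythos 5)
Your overall strategy coincides with the paper's: stability via the discrete comparison principle (\cref{prop:comparison}) sandwiched between constant barriers, and consistency via the interpolation bound $|I[\phi_j]-\phi_j|\le C(\Dx)^2$ plus a Taylor expansion giving an error $O\bigl((\Dx)^2/h+h\bigr)$, with the asymmetry between the two inequalities traced to the fact that $\mC^\D_j(x_i)\subset[0,\infty)$ always (giving the $\liminf$ bound everywhere) but exhausts $[0,\infty)$ only for $x_i\to x>\ux$ (giving the $\limsup$ bound only in the interior). Your ``main obstacle'' paragraph is actually more careful than the paper on one point: the paper applies the estimate \eqref{eq:estimate_grad} with a constant $C$ that tacitly depends on $|s_{i,j}(c)|$, without remarking that the upper endpoint of $\mC^\D_j(x_i)$ blows up like $(x_i-\ux)/h$; your reduction of the supremum to a fixed compact $[0,R]$ via coercivity is the right way to make that uniform (though your claim that for $D\phi_j<0$ the supremum is attained at a bounded $c$ is false --- there the unconstrained supremum is $+\infty$, and the first consistency inequality holds because the discrete supremum itself diverges as the constraint set grows).

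There is, however, a concrete error in your lower barrier. You take the pair $(\widehat\su_j,\widehat\su_{\bar\jmath})$ with \emph{distinct} components and then compute $\mathcal{F}^\D_j=\rho\widehat\su_j-\sup_c u(c)$, silently dropping the coupling term $-(1-\rho h)\lambda_j(\widehat\su_{\bar\jmath}-\widehat\su_j)$. Since $y_2>y_1$ and $t\mapsto t^{1-\gamma}/(1-\gamma)$ is increasing, $\widehat\su_2>\widehat\su_1$, so for $j=2$ that term equals $+(1-\rho h)\lambda_2(\widehat\su_2-\widehat\su_1)>0$; at $x_i=\ux$, where $\sup_{c\in\mC^\D_2(\ux)}u(c)=\rho\widehat\su_2$, this gives $\mathcal{F}^\D_2>0$, so your candidate is \emph{not} a discrete subsolution. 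The fix is exactly what the paper (and its \cref{barrier}) does: use the single constant $\widehat\su_1$ in \emph{both} components, so the coupling term vanishes identically and one only needs $\rho\widehat\su_1=\frac{(r\ux+y_1)^{1-\gamma}}{1-\gamma}\le\sup_{c\in\mC^\D_j(x_i)}u(c)$, which follows from $y_1<y_2$ and monotonicity of the constraint set in $x_i$. (A second, harmless slip: $\sup_{c\ge0}u(c)=0$ for the CRRA utility with $\gamma>1$, not $+\infty$; this is precisely why $(0,0)$ is a supersolution.)
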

\begin{proof}
The existence and uniqueness of solution to \cref{Scheme} has been considered in \cref{Howard theo}. We now use comparison to give the uniform bound. Clearly, $(0,0)$ is a supersolution to \cref{Scheme}. We now show the constant
\beq\label{def discrete sub}
\left(\frac{1}{\rho}\frac{(r\ux+y_1)^{1-\gamma}}{1-\gamma},\frac{1}{\rho}\frac{(r\ux+y_1)^{1-\gamma}}{1-\gamma}\right)
\eeq
 is a subsolution. Since \cref{def discrete sub} is a constant solution and from \cref{M pos}, we only need to show
\beq\label{discrete sub}
\frac{(r\ux+y_1)^{1-\gamma}}{1-\gamma}\leq \sup_{c\in \mC^{\D}_{j}(x_i)}\{u(c)\}\quad \forall x_i.
\eeq
From \cref{C set}, $\sup_{c\in \mC^{\D}_{j}(\ux)}\{u(c)\}=\frac{(r\ux+y_j)^{1-\gamma}}{1-\gamma}$ and $\sup_{c\in \mC^{\D}_{j}(x_i)}\{u(c)\}\geq \sup_{c\in \mC^{\D}_{j}(\ux)}\{u(c)\}$ if $x_i>\ux$ . We conclude \cref{discrete sub} by observing $y_1<y_2$.  \par
We now consider the consistency of scheme.  Given a  function $\phi=(\phi^1,\phi^2)$ with $\phi_{j}:\R\to\R$, we denote with 
$
	I[\phi_{j}](x):=\sum_{i\in  \N} \beta_i(x) \phi_{j} (x_i) 
$
its linear interpolation on the grid $\mA^\Dx$. If $\phi_{j}\in C^2(\R)$ with bounded derivative, then there exists a positive constant $C_1$ such that
\begin{equation}\label{eq:intepolation} 
	\sup_{x\in \R}| I[ \phi_{j}](x)-\phi_{j}(x)|\le C_1(\Dx)^2.
\end{equation} 
From \eqref{eq:intepolation} we obtain
\begin{equation}\label{eq:estimate_grad}
	\begin{split}
		& \left|\frac{1}{h} \big[ \sum_k \beta_k(x_i+hs_{i,j}(c))\phi_{j}(x_i)-\phi_{j}(x_i)]-D\phi_{j}(x_i)s_j(x_i)\right|\\
		=& \left|\frac{1}{h} \big[I[ \phi_{j}](x_i+hs_{i,j}(c))-\phi_{j}(x_i)\big]-D\phi_{j}(x_i)s_j(x_i)\right| \\
\le &\left|\frac{1}{h}  \big[\phi_{j}(x_i+hs_{i,j}(c))-\phi_{j}(x_i)\big]-D\phi_{j}(x_i)s_j(x_i)\right| +	C_1\frac{(\Dx)^2}{h}
\le C\left(\frac{(\Dx)^2}{h}+h\right),
	\end{split}
\end{equation}
we have from \cref{eq:estimate_grad}:
\begin{align*}
&\sup_{c\in \mC^{\D}_{j}(x_i)}\left\{
	u(c)+(1-\rho h)(1-\lambda_jh)D\phi_{j}(x_i)(rx_i+y_j-c)
	\right\}\\
\leq {}&\sup_{c\in \mC^{\D}_{j}(x_i)}\left\{
	u(c)+(1-\rho h)(1-\lambda_jh)\frac{1}{h} \left[ \sum_k \beta_k\left(x_i+hs_{i,j}(c)\right)\phi_{j}(x_i)-\phi_{j}(x_i)\right] 
	\right\}
	+C\left(\frac{(\Dx)^2}{h}+h\right).
\end{align*}
Let $x_i\to x\in (\ux,\infty)$ for $\D \to 0$, then $x_i>\ux$ when $\D$ is sufficiently small. From \cref{C_j}, we have
$
H(x_i, y_j,D\phi_{j})=\sup_{c\in \mC^{h}_{j}(x_i)}\left\{
	u(c)+(rx_i+y_j-c)D\phi_{j}(x_i)\right\} .
$
We then obtain
\begin{align*}
	  &\rho \phi_{j}(x_i)- H(x_i,y_j,D\phi_{j})-\lambda_j(\phi^{j}(x_i)-\phi_{\bar \jmath}(x_i))\\
	={}&\rho \phi_{j}(x_i)-\sup_{c\in \mC^{h}_{j}(x_i)}\left\{
	u(c)+(rx_i+y_j-c)D\phi_{j}(x_i)\right\} - \lambda_j(\phi_{\bar \jmath}(x_i)-\phi_{j}(x_i)) \\
	\ge{}& \rho \phi_{j}(x_i)-\sup_{c\in \mC^{h}_{j}(x_i)}\left\{
	u(c)+(1-\rho h)(1-\lambda_jh)D\phi_{j}(x_i)(rx_i+y_j-c)
	\right\}\\
	&-(1-\rho h)\lambda_j(\phi_{\bar \jmath}(x_i)-\phi_{j}(x_i))-C_1h\\
	\ge{}& \rho \phi_{j}(x_i)-(1-\rho h)\lambda_j(\phi_{\bar \jmath}(x_i)-\phi_{j}(x_i))-C\left(\frac{(\Dx)^2}{h}+h\right)\\
	&-\sup_{c\in \mC^{h}_{j}(x_i)}\left\{
	u(c)+\frac{(1-\rho h)(1-\lambda_jh)}{h} \left[ \sum_k \beta_k(x_i+hs_{i,j}(c))\phi_{j}(x_i)-\phi_{j}(x_i)\right] 
	\right\}
\end{align*}
and passing to the $\limsup$ in $(\ux,\infty)$,  we get the first condition in
{\em consistency}. 

%
Given $x\in [\ux,\infty)$, let $x_i\in \mA^\Dx$ such that $x_i\to x$. Then, taking into account \eqref{eq:estimate_grad}, we have
\begin{align*}
&\mathcal{F}^\D_j(x_i,\phi(x_i),\phi_{j})\\
\ge{}& \rho \phi_{j}(x_i)-\sup_{c\in \mC^{\D}_{j}(x_i)}\left\{
	u(c)+(1-\rho h)(1-\lambda_jh)D\phi_{j}(x_i)(rx_i+y_j-c)
	\right\}\\
	&-(1-\rho h)\lambda_j(\phi_{\bar \jmath}(x_i)-\phi_{j}(x_i))-C_1\left(h+\frac{(\Dx)^2}{h}\right)\\
\ge{}& \rho \phi_{j}(x_i)-\sup_{c\in (0,\infty)}\Big\{
	u(c)+(1-\rho h)(1-\lambda_jh)D\phi_{j}(x_i)(rx_i+y_j-c)
	\Big\}\\
	&-(1-\rho h)\lambda_j(\phi_{\bar \jmath}(x_i)-\phi_{j}(x_i))-C_1\left(h+\frac{(\Dx)^2}{h}\right)\\
={}& \rho \phi_{j}(x_i)- H(x_i, y_j,D\phi_{j})-\lambda_j(\phi_{j}(x_i)-\phi_{\bar \jmath}(x_i))-C\left(\frac{(\Dx)^2}{h}+h\right).
\end{align*}
Passing to the $\liminf_{{\D\to 0}\atop{x_i\to x}}$ in the previous inequality, we get the second condition in {\em consistency}.

\end{proof}

\begin{proposition}
Assume $\D\sim h$ and  $\D$ is sufficiently small, then the solution $V^\D_{i,j}$ to the scheme \eqref{Scheme} is nondecreasing in $i$. 
\end{proposition}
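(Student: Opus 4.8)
The plan is to avoid arguing directly on \eqref{Scheme} and instead exploit the fixed-point structure of the scheme. Writing \eqref{eq:HJB_vect} as $\mathbf V=\mathcal S[\mathbf V]$, where for $\mathbf W=(W_{\cdot,1},W_{\cdot,2})\in B(\mA^\Dx)^2$ and $j=1,2$
\[
\mathcal S_j[\mathbf W]_i:=\sup_{c\in\mathcal C^{\D}_{j}(x_i)}\Bigl\{hu(c)+(1-\rho h)\bigl[\lambda_j h\,W_{i,\bar\jmath}+(1-\lambda_j h)\,I[W_{\cdot,j}]\bigl(x_i+hs_{i,j}(c)\bigr)\bigr]\Bigr\},
\]
and $I$ is the interpolation operator \eqref{Eq:intrep}, I would first record that for $\D$ small enough (so that $1-\rho h\in(0,1)$ and $1-\lambda_j h>0$) a routine check — using only $\sum_k\beta_k\equiv1$ and $u\le0$ — shows $\mathcal S$ is a well-defined contraction on $B(\mA^\Dx)^2$ with factor $1-\rho h$; by \cref{Howard theo} its unique fixed point is $\mathbf V$, and $\mathcal S^{n}[\mathbf W]\to\mathbf V$ uniformly for any starting point $\mathbf W$. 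Consequently it suffices to show that $\mathcal S$ maps the set $\mathcal K$ of pairs $\mathbf W$ with both components $W_{\cdot,1},W_{\cdot,2}$ nondecreasing in $i$ into itself: then $\mathcal S^{n}[(0,0)]\in\mathcal K$ for all $n$, and since $\mathcal K$ is stable under pointwise limits, the limit $\mathbf V$ belongs to $\mathcal K$.

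The core step is thus: given $\mathbf W\in\mathcal K$, show $\mathcal S_j[\mathbf W]_i\le\mathcal S_j[\mathbf W]_{i+1}$ for every $i$ and $j$. The idea is to feed the same consumption $c$ into the $(i{+}1)$-expression. Two elementary facts, both needing only $1+rh>0$, make this work. First, by \eqref{Eq:C_j}, $\mathcal C^{\D}_{j}(x_i)=[0,\bar c_{i,j}]$ with $\bar c_{i+1,j}-\bar c_{i,j}=\tfrac{\Dx}{h}(1+rh)\ge0$, so $\mathcal C^{\D}_{j}(x_i)\subset\mathcal C^{\D}_{j}(x_{i+1})$. Second, a one-line computation gives
\[
x_{i+1}+hs_{i+1,j}(c)=\bigl(x_i+hs_{i,j}(c)\bigr)+\Dx(1+rh)\ \ge\ x_i+hs_{i,j}(c),
\]
so the interpolation node used at $x_{i+1}$ lies to the right of the one used at $x_i$. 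Since $I[W_{\cdot,j}]$ is nondecreasing (being the linear interpolant of a nondecreasing grid function), this yields $I[W_{\cdot,j}](x_i+hs_{i,j}(c))\le I[W_{\cdot,j}](x_{i+1}+hs_{i+1,j}(c))$; together with $W_{i,\bar\jmath}\le W_{i+1,\bar\jmath}$, the bracket defining $\mathcal S_j[\mathbf W]_i$ at $c$ is then dominated by the bracket defining $\mathcal S_j[\mathbf W]_{i+1}$ at the same $c$, which by the first fact is admissible at $x_{i+1}$. Taking the supremum over $c\in\mathcal C^{\D}_{j}(x_i)$ gives $\mathcal S_j[\mathbf W]_i\le\mathcal S_j[\mathbf W]_{i+1}$, i.e. $\mathcal S[\mathbf W]\in\mathcal K$.

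The main obstacle to keep in mind is that the scheme is \emph{not} translation-invariant in $i$: the drift $s_{i,j}(c)=rx_i+y_j-c$ contains the node-dependent term $rx_i$, and the admissible set $\mathcal C^{\D}_{j}(x_i)$ itself varies with $i$. Both effects are governed by the single sign condition $1+rh>0$, which holds once $\D$ is small (recall $r$ is a fixed number); it is exactly here that the hypothesis ``$\D$ sufficiently small'' is used, making the admissible sets nested and keeping the node displacement $\Dx(1+rh)$ positive so that monotonicity of the interpolant can be invoked. The remaining points — that $(0,0)$ is an admissible starting iterate and that ``nondecreasing in $i$'' passes to the uniform limit — are immediate.
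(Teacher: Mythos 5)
Your proof is correct, but it follows a genuinely different route from the paper's. The paper argues directly on the solution: it shows that the index-shifted vector $V^{\Delta}_{i+1,j}$ is a supersolution of the scheme at $x_i$ and then invokes the discrete comparison principle (\cref{prop:comparison}). The key device there is a \emph{shifted control}: from the optimizer $c^*_{i,j}$ at $x_i$ one passes to $c^*_{i,j}+r\Dx+\tfrac{\Dx}{h}$, which is admissible at $x_{i+1}$ and makes the characteristic feet coincide exactly, $x_{i+1}+hs_{i+1,j}\bigl(c^*_{i,j}+r\Dx+\tfrac{\Dx}{h}\bigr)=x_i+hs^*_{i,j}$, so that only monotonicity of $u$ is needed and no monotonicity of $V$ itself is ever assumed. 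You instead keep the \emph{same} control at both nodes and compensate for the displaced foot $x_{i+1}+hs_{i+1,j}(c)=x_i+hs_{i,j}(c)+\Dx(1+rh)$ by using monotonicity of the interpolant $I[W_{\cdot,j}]$ — which is only available because you work inside the cone $\mathcal K$ and propagate membership in $\mathcal K$ through value iteration, passing to the limit via the contraction property of the Bellman operator. Both arguments hinge on the same sign condition $1+rh>0$ (equivalently $r\Dx+\Dx/h>0$), which is where $\Dx\sim h$ and $\D$ small enter, and both use the nesting $\mC^{\D}_{j}(x_i)\subset\mC^{\D}_{j}(x_{i+1})$. What your route buys is self-containedness: it bypasses the comparison principle for the shifted grid function (and the attendant bookkeeping of which grid vector appears in the third slot of $\mathcal F^{\D}_j$), at the modest cost of verifying that the operator is a $(1-\rho h)$-contraction and that $\mathcal K$ is closed under the limit; the paper's route stays within the sub/supersolution machinery it has already built and needs no induction on iterates.
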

\begin{proof}
We aim to show 
\beq\label{Vi+1}
\mathcal{F}^\D_j\left(x_i,\left[V^{\Delta}_{i+1,j},V^{\Delta}_{i+1,\bar \jmath}\right],V^{\Delta}_{\cdot,j}\right)\geq 0\qquad i\in\NN,\,j=1,2,\,\bar{\jmath}=3-j,
\eeq
and then apply the discrete comparison principle. For \cref{Vi+1} to hold, we only need
\beq\label{sup i+1}
\begin{aligned}
&\sup_{c\in \mC^{\D}_{j}(x_{i+1})}\left\{
 u(c)+(1-\rho h)(1-\lambda_jh)\frac{1}{h}\left(\sum_k \beta_k(x_{i+1}+hs_{i+1,j}(c))V_{k,j} \right)\right\}\\
 \geq {}&\sup_{c\in \mC^{\D}_{j}(x_{i})}\left\{
 u(c)+(1-\rho h)(1-\lambda_jh)\frac{1}{h}\left(\sum_k \beta_k(x_i+hs_{i,1}(c))V_{k,j} \right)\right\},
 \end{aligned}
\eeq
where the $\sup$ on the right hand side of inequality \eqref{sup i+1} is attained by  $c^*_{i,j}$. Notice that since $c^*_{i,j}\in \mC^{\D}_{j}(x_{i})$ we have $c^*_{i,j}+r\D x+\frac{\D x}{h}\in \mC^{\D}_{j}(x_{i+1}).$
From $\D\sim h$ and  $\D$ being sufficiently small, we have $r\D x+\frac{\D x}{h}>0$ for any fixed $r$. Since $c^*_{i,j}+r\D x+\frac{\D x}{h}$ is an admissible but possibly suboptimal control at $(x_{i+1},y_j)$, we can obtain
$$
\begin{aligned}
&\sup_{c\in \mC^{\D}_{j}(x_{i+1})}\left\{
 u(c)+(1-\rho h)(1-\lambda_jh)\frac{1}{h}\left(\sum_k \beta_k(x_{i+1}+hs_{i+1,j}(c))V_{k,j} \right)\right\}\\
 \geq {}&
 u\left(c^*_{i,j}+r\D x+\frac{\D x}{h}\right)
+(1-\rho h)(1-\lambda_jh)\frac{1}{h}\left(\sum_k \beta_k\left(x_{i+1}+hs_{i+1,j}\left(c^*_{i,j}+r\D x+\frac{\D x}{h}\right)\right)V_{k,j} \right)\\
 \geq {}&
 u(c^*_{i,j})+(1-\rho h)(1-\lambda_jh)\frac{1}{h}\left(\sum_k  \beta_k(x_{i}+hs_{i,j}^*)V_{k,j} \right).
 \end{aligned}
$$
In the second inequality, we used monotonicity of utility $u(\cdot)$ and:
\begin{align*}
& x_{i+1}+hs_{i+1,j}\left(c^*_{i,j}+r\D x+\frac{\D x}{h}\right)\\
={}&x_i+\D x+h\left(r(x_i+\D x)+y_j-\left(c^*_{i,j}+r\D x+\frac{\D x}{h}\right)\right)=x_i+hs_{i,j}^*.
\end{align*}
We then have \cref{sup i+1} and therefore \cref{Vi+1}.
\end{proof}
We give the main convergence result. Recall that $\bV\in B(\mA^\Dx)^2$ is the solution to the discrete HJB equation \eqref{Scheme}. We define the numerical solution $v^\D(x):=I[\bV](x)$ by using the interpolation operator \eqref{Eq:intrep}.
\begin{theorem}\label{thm:convergence}
Let $v$ be the unique viscosity solution to the system \eqref{eq:HJB}. Then, $v^\D(x)\to v(x)$ locally uniformly on $[\ux,+\infty)$ as $\D\to 0$. 
\end{theorem}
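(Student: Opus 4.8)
The plan is to run the Barles--Souganidis method of half-relaxed limits, using the monotonicity (\cref{monotone_scheme}) and the stability and consistency of the scheme (\cref{prop_scheme}), together with the strong comparison principle \cref{comparison}. For $\D=(h,\Dx)$ with $\Dx\sim h$, the scheme \cref{Scheme} has a unique bounded solution $\bV$ by \cref{Howard theo}, and these solutions are bounded uniformly in $\D$ by the stability part of \cref{prop_scheme}; hence the half-relaxed limits
\[
\overline v_j(x):=\limsup_{\substack{\D\to0\\ y\to x}} v^\D_j(y),\qquad
\underline v_j(x):=\liminf_{\substack{\D\to0\\ y\to x}} v^\D_j(y),\qquad x\in[\ux,\infty),\ j=1,2,
\]
are finite and bounded, with $\overline v_j$ u.s.c., $\underline v_j$ l.s.c., and $\underline v_j\le\overline v_j$ pointwise.

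The core step is to show that $\overline v=(\overline v_1,\overline v_2)$ is a viscosity subsolution of \cref{eq:HJB} on the \emph{closed} half-line $[\ux,\infty)$ and that $\underline v=(\underline v_1,\underline v_2)$ is a viscosity supersolution on the \emph{open} half-line $(\ux,\infty)$, i.e.\ a constrained sub/supersolution pair in the sense of \cref{def vis sol}. For the subsolution side, let $\varphi\in C^1$ be such that $\overline v_j-\varphi$ has a strict local maximum at $x_0\in[\ux,\infty)$, normalized so that $(\overline v_j-\varphi)(x_0)=0$. Since $v^\D_j=I[\bV_j]$ is piecewise linear and $\varphi$ smooth, one picks along a subsequence $\D_n\to0$ grid points $x_{i_n}\to x_0$ at which $v^{\D_n}_j-\varphi$ is within $O((\Dx_n)^2)$ of its local maximum, with $V^{\D_n}_{i_n,j}\to\overline v_j(x_0)$ and, after a further extraction, $V^{\D_n}_{i_n,\bar\jmath}\to\ell\le\overline v_{\bar\jmath}(x_0)$. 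Starting from $\mathcal F^{\D_n}_j=0$ at $x_{i_n}$, the quantitative monotonicity \cref{S monotone} allows replacing the function argument $V^{\D_n}_{\cdot,j}$ by $\varphi$ and the value slot by $\varphi(x_{i_n})$, with errors $O((\Dx_n)^2/h_n)=O(\Dx_n)\to0$ (here $\Dx\sim h$ enters); the affine, nonincreasing dependence of $\mathcal F^\D_j$ on its $\bar\jmath$-slot, together with $V^{\D_n}_{i_n,\bar\jmath}\to\ell\le\overline v_{\bar\jmath}(x_0)$, allows replacing $V^{\D_n}_{i_n,\bar\jmath}$ by the constant $\overline v_{\bar\jmath}(x_0)$ at nonpositive cost. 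One thus arrives at $\mathcal F^{\D_n}_j(x_{i_n},(\varphi(x_{i_n}),\overline v_{\bar\jmath}(x_0)),\varphi)\le o(1)$, and the \emph{consistency} lower bound of \cref{prop_scheme}, valid on all of $[\ux,\infty)$, gives in the limit $\rho\overline v_j(x_0)\le H(x_0,y_j,D\varphi(x_0))+\lambda_j(\overline v_{\bar\jmath}(x_0)-\overline v_j(x_0))$. The supersolution property of $\underline v$ is the mirror argument, using the \emph{consistency} upper bound, which \cref{prop_scheme} asserts only on $(\ux,\infty)$. The two halves of the consistency estimate are split precisely as ``$\liminf\ge$ on $[\ux,\infty)$'' and ``$\limsup\le$ on $(\ux,\infty)$'', which is exactly what makes the relaxed-limit pair a \emph{constrained} sub/supersolution pair rather than a classical one.

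Extending $\underline v_j$ at $\ux$ by $\liminf_{z\to\ux^+}\underline v_j(z)$ as in \cref{liminf super} and applying \cref{comparison} with $\mathsf u=\overline v$, $\mathsf v=\underline v$ gives $\overline v_j\le\underline v_j$; combined with $\underline v_j\le\overline v_j$ this yields $\overline v_j=\underline v_j=:v_j$ on $(\ux,\infty)$ (hence, as $\overline v_j$ is u.s.c. and $\underline v_j$ l.s.c., $v_j$ extends continuously up to $\ux$), and $v=(v_1,v_2)$ is a continuous constrained viscosity solution of \cref{eq:HJB}, so $v$ is the unique such solution by the uniqueness statement following \cref{barrier}. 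Coincidence of the relaxed limits with a continuous limit on $(\ux,\infty)$ gives local uniform convergence $v^\D_j\to v_j$ there by the usual compactness argument. Convergence at the endpoint $\ux$ follows as well: since $v^\D_j=I[\bV_j]$ is nondecreasing for small $\D$ (by monotonicity of $V^\D_{i,j}$ in $i$), one has $\underline v_j(\ux)=\liminf_{\D\to0}V^\D_{0,j}$, and the coercivity of $H$ — its blow-up as $p\to+\infty$, which would be triggered by any downward jump of $\underline v$ at $\ux$ — forces $\underline v_j(\ux)=\overline v_j(\ux)=v_j(\ux)$.

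The main obstacle, beyond the routine bookkeeping of perturbing a test function against a piecewise-linear interpolant, is the weak coupling of the system: the consistency estimates are phrased for $\mathcal F^\D_j(x_i,\phi(x_i),\phi_j)$ with a smooth function fed into every slot, whereas the scheme equation carries the neighbouring component $V^\D_{i,\bar\jmath}$ in a separate argument, so its relaxed-limit value must be inserted into the consistency inequality in a way compatible with the scheme's monotonicity in that slot; \cref{monotone_scheme}, with its $\theta=\max_k\{\bq_k-\bm_k\}$ formulation, is precisely what makes the additive corrections harmless in the limit. A secondary delicate point is the behaviour at $x=\ux$: the subsolution inequality for $\overline v$ must be obtained there (which is why it matters that the $\liminf$ half of {\em consistency} is valid on the closed interval), and convergence at that single point requires the extra coercivity argument just described.
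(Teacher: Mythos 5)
Your proposal is correct and follows essentially the same route as the paper: the Barles--Souganidis method of half-relaxed limits, using monotonicity, stability and consistency from \cref{monotone_scheme} and \cref{prop_scheme} to show that $\overline v$ and $\underline v$ are a constrained sub/supersolution pair, then invoking the comparison principle of \cref{comparison} to conclude. The paper states this step as ``standard'' without elaboration, so your write-up simply supplies the details (including the correct pairing of the $\liminf$/$\limsup$ consistency halves with the closed/open half-lines, and the extra care at the endpoint $\ux$) that the paper leaves implicit.
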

\begin{proof}
We define the bounded functions
$
\bar{\su}(x):=\limsup_{\substack{z\to x\\ \D \to 0}}v^\D(z),\quad \underline{\sv}(x):=\liminf_{\substack{z\to x\\ \D \to 0}}v^\D(z).
$
By definition we have $\bar{\su}(x)\geq \underline{\sv}(x)$. By using properties of the scheme (monotonicity, stability and consistency) it is standard to show that $\bar{\su}$ and $\underline{\sv}$ are respectively a sub and supersolution to the system \eqref{eq:HJB}. Using the comparison principle, we obtain $\bar{\su}(x)\leq \underline{\sv}(x)$. Therefore $v(x)=\bar{\su}(x)= \underline{\sv}(x)$ is the viscosity solution. 
\end{proof}

\section{Numerical analysis}

\subsection{Approximation of the policies under state constraints}
In the theoretical analysis, $c^*_{i,j}$ is defined as the $\arg\max$ in \cref{eq:fully_discrete_scheme} ($i$). In practice, $c^*_{i,j}$ can be obtained in at least three different approaches:
\begin{itemize}
\item Solve an optimization problem on each grid $(i,j)$ using $\sf{fminbound}$ function in python, this is most consistent with analysis but least efficient;
\item Use a discretized control space and then an $\arg\max$ function, there is a trade-off between efficiency and accuracy when choosing the control space;
\item Use the first order condition from the system \eqref{eq:HJB} $c_j^*(x)=(Dv_j(x))^{-1/\gamma}$, this is the most effective one in practice.
\end{itemize}
In the third approach, we use the finite difference derivative ${\mathbf{D}}(V_{i,j}^{\Delta})$ to approximate $Dv_j(x_i)$, where
\beq
{\mathbf{D}}(V_{i,j}^{\Delta})=
\left\{\begin{aligned}
\qquad &\frac{V_{1,j}^{\Delta}-V_{0,j}^{\Delta}}{\D x},\qquad &&i=0,\\
&\frac{V_{i-1,j}^{\Delta}-2V_{i,j}^{\Delta}+V_{i+1,j}^{\Delta}}{2\D x},&&0<i<N_x,\\
&\frac{V_{N_x,j}^{\Delta}-V_{N_x-1,j}^{\Delta}}{\D x},&&i=N_x.
\end{aligned}\right.
\eeq
To enforce the state constraint at $\ux$, we use \cref{Eq:C_j} and policy update step
\beq\label{approx pi}
c^{(\iota+1)}_{i,j} = \min \left\{\left[{\mathbf{D}}(V_{i,j}^{\Delta,\i})\right]^{-\tfrac{1}{\gamma}},\frac{i\D x-\ux}{h}+r(\ux+i\D x)+y_j\right\}.
\eeq

\subsection{Algorithms and implementation}

\begin{algorithm}[H]
    \caption{Howard algorithm with fixed $r$}\label{Howard}
    \KwData{Initial values $\bc_j^{(0)}$, and parameters $r,\lambda_{1},\lambda_{2}, h, \D x, y_1,y_2$.}
    \KwResult{Solution $\bV$}
    \Do{$\max_j\|\bc^{(\iota+1)}_j-\bc^{(\iota)}_j\|_{l^\infty}\geq 10^{-5}$}{
    \textit{Policy evaluation}: Solve for $\bV^\i$
    \begin{equation*}
    \begin{aligned}
       & \begin{bmatrix}
            (h\rho-1)(1-\lambda_1h)M(\bfs_1^\i)+I & \lambda_1h(h\rho-1)I \\
            \lambda_2h(h\rho-1)I & (h\rho-1)(1-\lambda_2h)M(\bfs_2^\i)+I
        \end{bmatrix}
        \begin{bmatrix}
            \bV_1^\i\\
            \bV_2^\i
        \end{bmatrix} \\
        ={}& h\begin{bmatrix}
            u(\bc_1^\i)\\
            u(\bc_2^\i)
        \end{bmatrix}
    \end{aligned}
    \end{equation*}
    \\ 
    Calculate 
    $
    \bc^{(\iota+1)}_j
    $
    with \cref{update_c} or \cref{approx pi}
    \Comment{Consumption policy update}\\
    $
    \bfs_j^{(\iota+1)}=s_{\cdot,j}^{(\iota+1)},\quad s_{i,j}^{(\iota+1)}=rx_i+y_j-c_{i,j}^{(\iota+1)}.
    $\Comment{Saving policy update}
        }
   \end{algorithm}
In all the algorithms in this section, $I$ stand for a $N\times N$ identity matrix. We first introduce the Howard algorithm \ref{Howard} for solving the HJB equation with a fixed $r$. At each iteration $\iota$, we start with two vectors $\bc_j^\i$ and construct the $N\times N$ matrix $M(\bfs_j^\i)$. The \textit{Policy evaluation} step is the vector form of \cref{eq:policy_ev}. In particular, $M(\bfs_j^\i)$ is tridiagonal if $h$ is chosen such that $x_i+h(rx_i+y_j-c_{i,j})\in [x_{i-1},x_{i+1}]\quad \forall i.$

\begin{algorithm}[H]
    \caption{Stationary Aiyagari model}\label{Aiyagari algo}
    \KwData{Initial values $c^{(0)}, r^{(0)}$, and parameters $\lambda_{1},\lambda_{2}, h, \D x, y_1,y_2, \alpha, \delta$.}
    \KwResult{Solution $\bV, \bG$, optimal policies $\bc,\mathbf{s}$, equilibrium $r$}
    \Do{$\|r^{(\tau+1)}-r^{(\tau)}\|\geq 10^{-5}$}{
        Solve HJB equation using Howard algorithm \ref{Howard} with $r=r^{(\tau)}$ and output optimal consumption policy as $\bc_j^{(\tau)}$, $j=1,2$ \\
     $
    \bfs^{(\tau)}_j=s^{(\tau)}_{\cdot,j},\quad s^{(\tau)}_{i,j}=r^{(\tau)}x_i+y_j-c_{i,j}^{(\tau)}
    $\Comment{Update saving policy}\\
    Solve for $\bG^{(\tau)}=(\bG^{(\tau)}_1,\bG^{(\tau)}_2)$: $\sum_k G^{(\tau)}_{k,1}+\sum_k G^{(\tau)}_{k,2}=1/\D x$,
     $$
       \begin{bmatrix}
            (1-\lambda_1h)M^{\texttt{T}}(\bfs^{(\tau)}_1)-I & \lambda_2hI\\
            \lambda_1hI & (1-\lambda_2h)M^{\texttt{T}}(\bfs^{(\tau)}_2)-I
        \end{bmatrix}
        \begin{bmatrix}
            \bG^{(\tau)}_1\\
            \bG^{(\tau)}_2
        \end{bmatrix}=0
   $$ \Comment{Find the invariant distribution}\\
    $
     \sum_k x_k G^{(\tau)}_{k,1}\D x+\sum_k x_k G^{(\tau)}_{k,2}\D x=K^{(\tau)}
    $\Comment{Update aggregate asset}\\
    $
     r^{(\tau+1)}= A\alpha \left(\frac{N}{K^{(\tau)}}\right)^{1 - \alpha}-\delta
    $\Comment{Update interest rate}
}
\end{algorithm}

\begin{algorithm}[H]
    \caption{Dynamic Aiyagari model}\label{algo:D Aiyagari}
    \KwData{Initial values $r^{(0)}_n, \bV^{(0),N}_{j}, \bG^{(0),0}_{j}$, and parameters $\lambda_1, \lambda_2,\rho, h, y_1,y_2, \alpha, \delta$.}
    \KwResult{$\bV^{n}, \bG^{n}$, optimal policies $\bc^n,\mathbf{s}^n$, equilibrium $r_n$}
    \Do{$\max_n\|r_n^{(\tau+1)}-r_n^{(\tau)}\| \geq 10^{-4}$}{
    \For{$n=N-1$ \KwTo $0$}{
           $
                c^{(\tau),n}_{i,j} =\min \left\{ [{\mathbf{D}}(V^{(\tau),n+1}_{i,j})]^{-1/\gamma},\frac{i\D x-\ux}{h}+r^{(\tau)}_n(\ux+i\D x)\right\}, \,\bfs^{(\tau),n}_{j}=r^{(\tau)}_n\mathbf{x}+y_j-\bc^{(\tau),n}_{j}
            $
       \begin{equation*}
    \begin{aligned}
       &\begin{bmatrix}
            \bV_{1}^{(\tau),n}\\
            \bV_{2}^{(\tau),n}
        \end{bmatrix}= h\begin{bmatrix}
            u(\bc^{(\tau),n}_{1})\\
            u(\bc^{(\tau),n}_{2})
        \end{bmatrix}\\
        &+\begin{bmatrix}
            (1-h\rho)(1-\lambda_1h)M(\bfs_{1}^{(\tau),n})  & \lambda_1h(1-h\rho)I \\
            \lambda_2h(1-h\rho)I & (1-h\rho)(1-\lambda_2h)M(\bfs_{1}^{(\tau),n}) 
        \end{bmatrix}
        \begin{bmatrix}
            \bV_{1}^{(\tau),n+1}\\
            \bV_{2}^{(\tau),n+1}
        \end{bmatrix}
    \end{aligned}
    \end{equation*}
\Comment{Solve the HJB equation backward in time}
        }
    \For{$n=0$ \KwTo $n-1$}{
    $$
           \begin{bmatrix}
                \bG^{(\tau),n+1}_{1}\\
                \bG^{(\tau),n+1}_{2}
            \end{bmatrix}
            =
            \begin{bmatrix}
                (1-\lambda_1h)M^{\texttt{T}}(\bfs_{1}^{(\tau),n}) & \lambda_2hI \\
                \lambda_1hI & (1-\lambda_2h)M^{\texttt{T}}(\bfs_{2}^{(\tau),n})
            \end{bmatrix}
            \begin{bmatrix}
                \bG^{(\tau),n}_{1}\\
                \bG^{(\tau),n}_{2}
            \end{bmatrix}
    $$\Comment{Update the probability distribution forward in time}
        }
    $
    \sum_k x_k G^{(\tau),n}_{k,1}+\sum_k x_k G^{(\tau),n}_{k,2}=K^{(\tau)}_n
    $
    \Comment{Update aggregate asset}
    \\
    $
    r_n^{(\tau+1)}= A\alpha \left(N_n^{(\tau)}\right)^{1 - \alpha}\left(K_n^{(\tau)}\right)^{\alpha-1}-\delta
    $\Comment{Update interest rate}
    }
\end{algorithm}

In Algorithm \ref{Aiyagari algo} we use Algorithm \ref{Howard} each time for solving the HJB equation with an updated interest rate $r^{(\tau)}$. At iteration $\tau$, $\bc^{(\tau)}_j$ is the output of Algorithm \ref{Howard} and at the end of the inner iteration loop the matrix $M\left(\bfs^{(\tau)}_j\right)$, $j=1,2$ are stored. The transposition $M^{\texttt{T}}\left(\bfs^{(\tau)}_j\right)$ is immediately used for solving the FPK equation. After solving for $\bG^{(\tau)}$ we update the aggregate asset $K^{(\tau)}$ and then $r^{(\tau)}$. To solve the Huggett model with Algorithm \ref{Aiyagari algo}, we use the bi-section method for modifying the interest rate update step: if $K^{(\tau)}-B> 10^{-5}$ then $r^{(\tau+1)} =\frac{r_{min}+r^{(\tau)}}{2}$, else if $K^{(\tau)}-B< 10^{-5}$ then $r^{(\tau+1)} = \frac{r_{max}+r^{(\tau)}}{2}$ until $\vert K^{(\tau)}-B\vert \geq 10^{-5}$. \par
For solving dynamic Aiyagari model with Algorithm \ref{algo:D Aiyagari}, at each iteration $\tau$ and with a fixed flow of interest rate $r^{(\tau)}_n$ we use backward induction for solving the evolutive HJB equation and then forward time marching for the FPK equation. Analogously to the stationary model, at iteration $\tau$ the matrix $M\left(\bfs_{j}^{(\tau),n}\right)$ for time index $n$ is the transposition $M^{\texttt{T}}\left(\bfs_{j}^{(\tau),n}\right)$, which is used for solving the evolutive FPK equation. \\

\subsection{Examples}
\noindent \textbf{Stationary models}\\
We first consider the stationary Aiyagari model with the parameters:
$
\ux=-0.15, y_1=0.1, y_2=0.5, \lambda_1=\lambda_2=0.4, \alpha=0.35, \delta=0.1.
$
We compare results for different risk aversion $\gamma$. The results with $\gamma=2$ shows consistency with the plots in \cite[Numerical Appendix]{achdou2022income}. The results with $\gamma=4$ show that higher risk aversion leads to less consumption (more saving), less concentration at $\ux$ and lower interest rate. \cref{c prime} shows the asymptotic behavior $c'(x)\rightarrow r+\frac{1}{\gamma}(\rho-r)$ as $x\rightarrow +\infty$. 
\begin{figure}[h!]
	\centering
	\caption{ Value function with $\gamma=2$ (left) and probability distribution (right)}\label{cs Aiyagari}
	\begin{tabular}{cc}
	\includegraphics[width=0.46\textwidth]{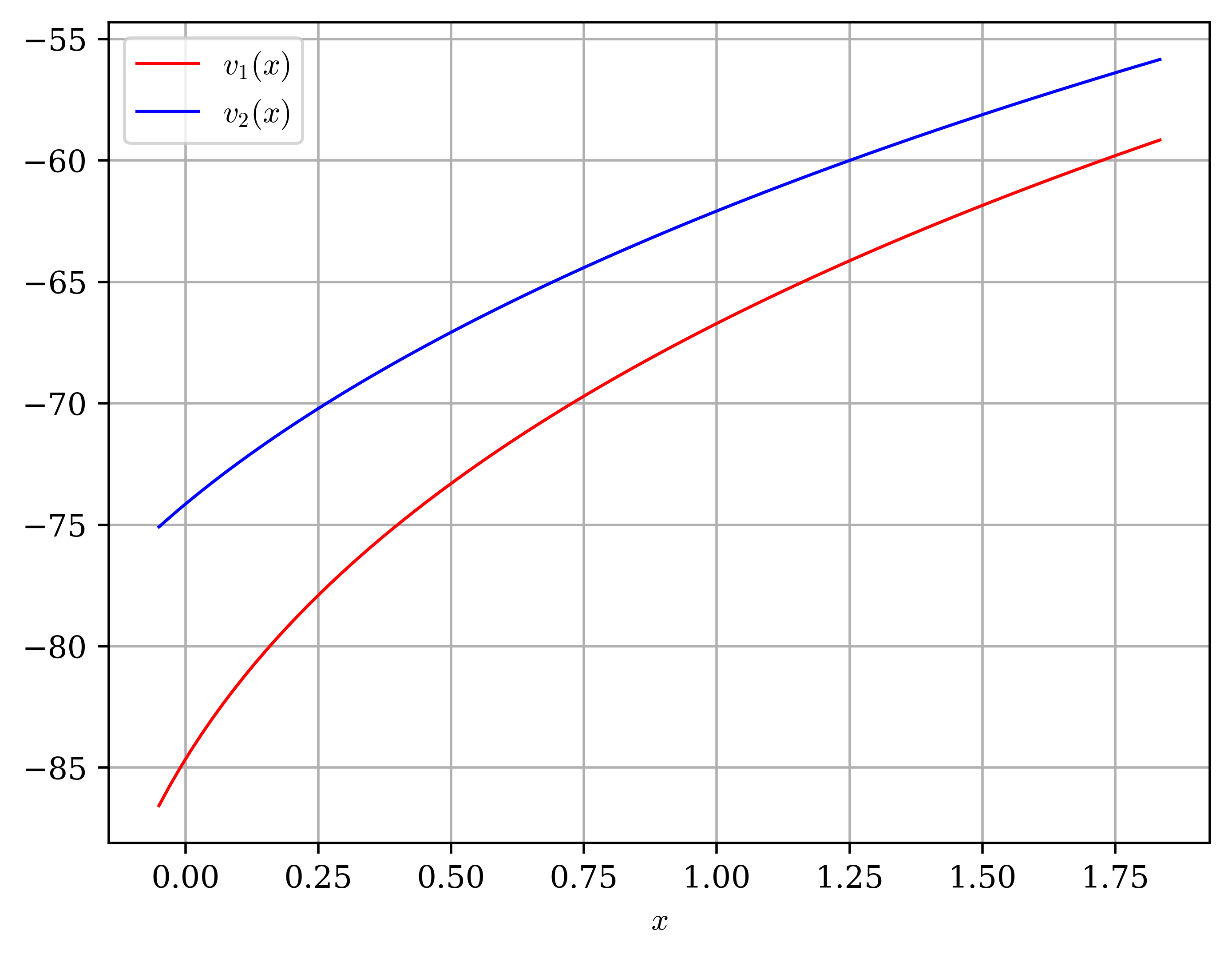} &
	\includegraphics[width=0.46\textwidth]{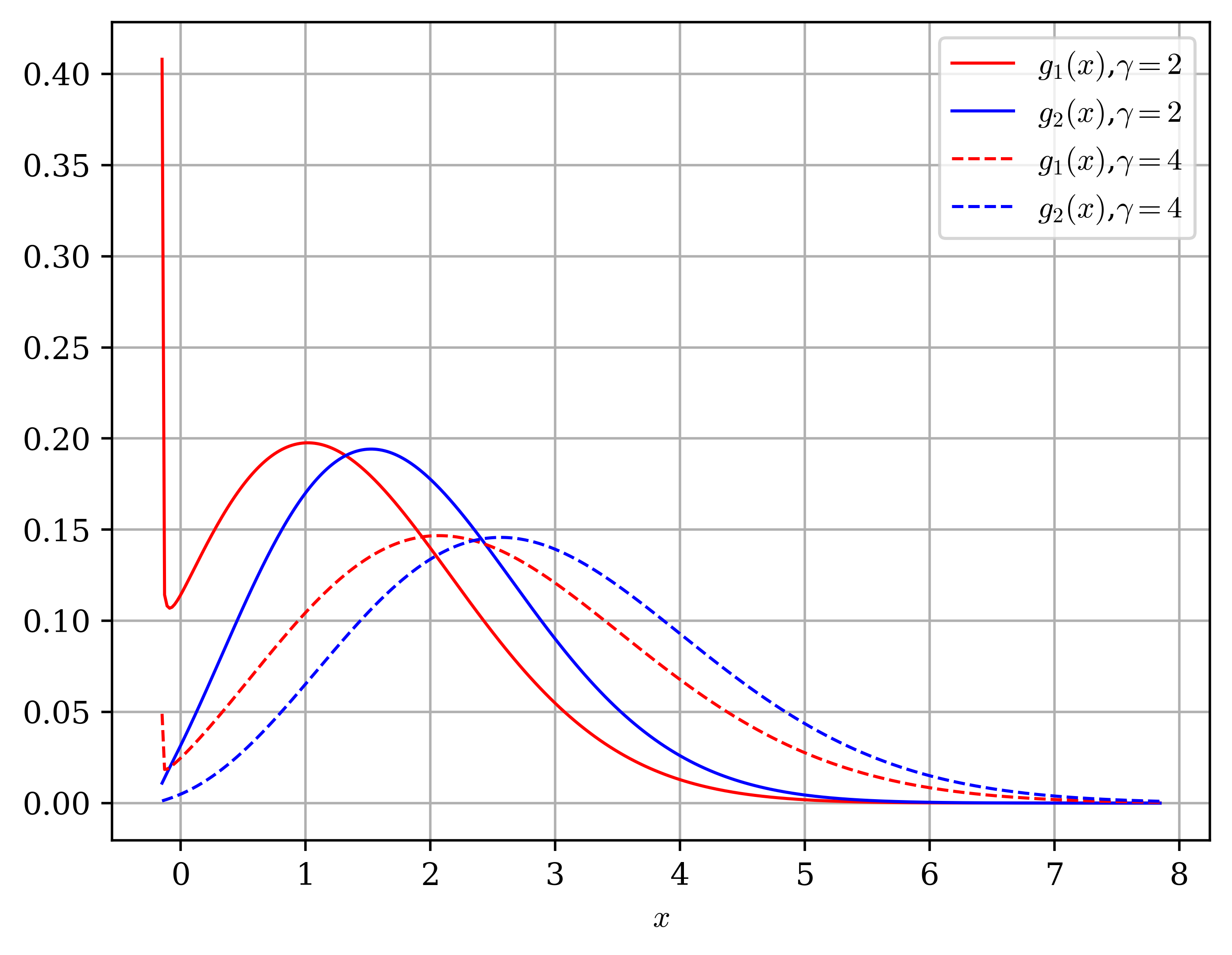} \\
	\end{tabular}
\end{figure}
\begin{figure}[ht]
	\centering
	\caption{ Consumption (left) and saving (right)}\label{cs Aiyagari}
	\begin{tabular}{cc}
	\includegraphics[width=0.46\textwidth]{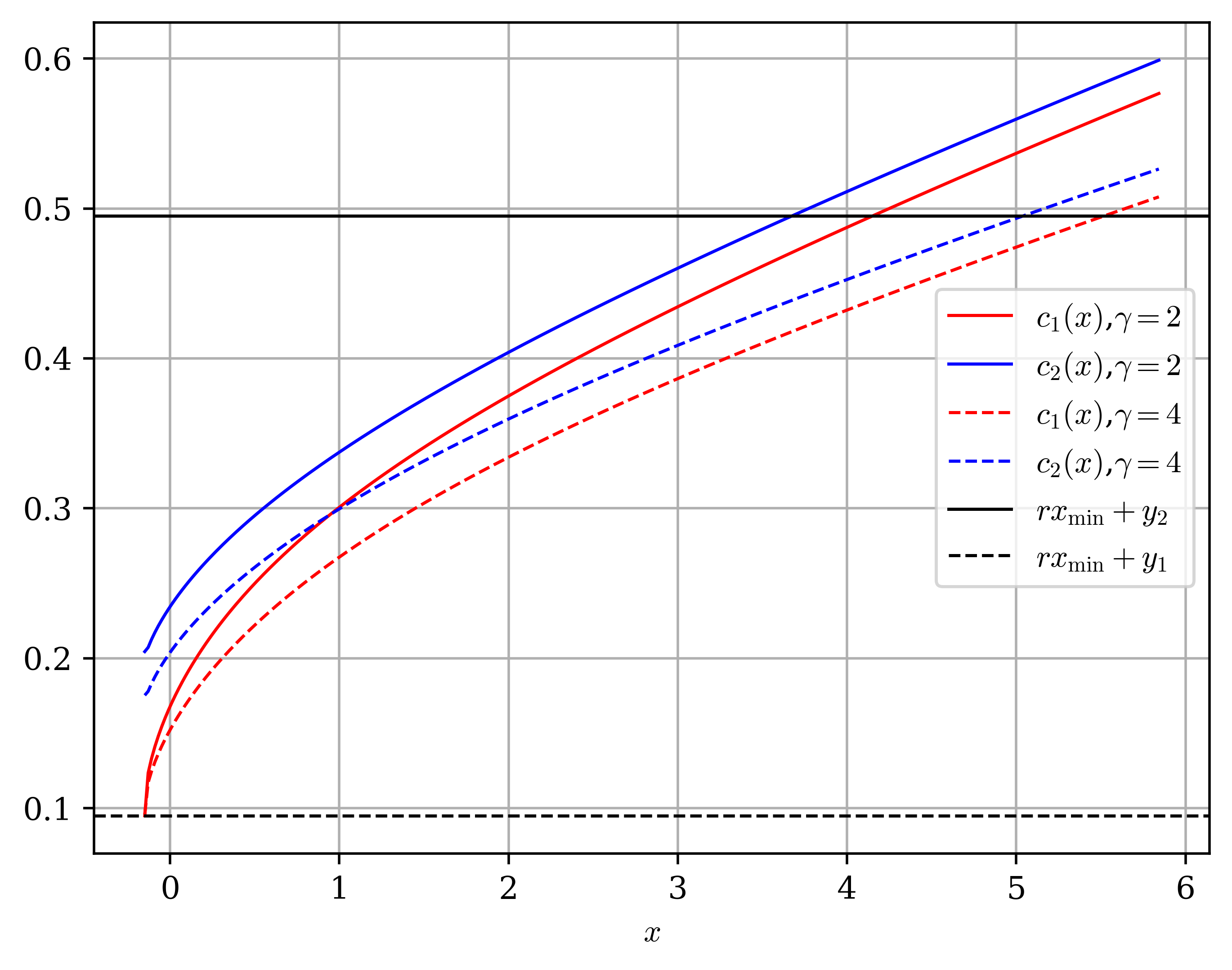} &
	\includegraphics[width=0.46\textwidth]{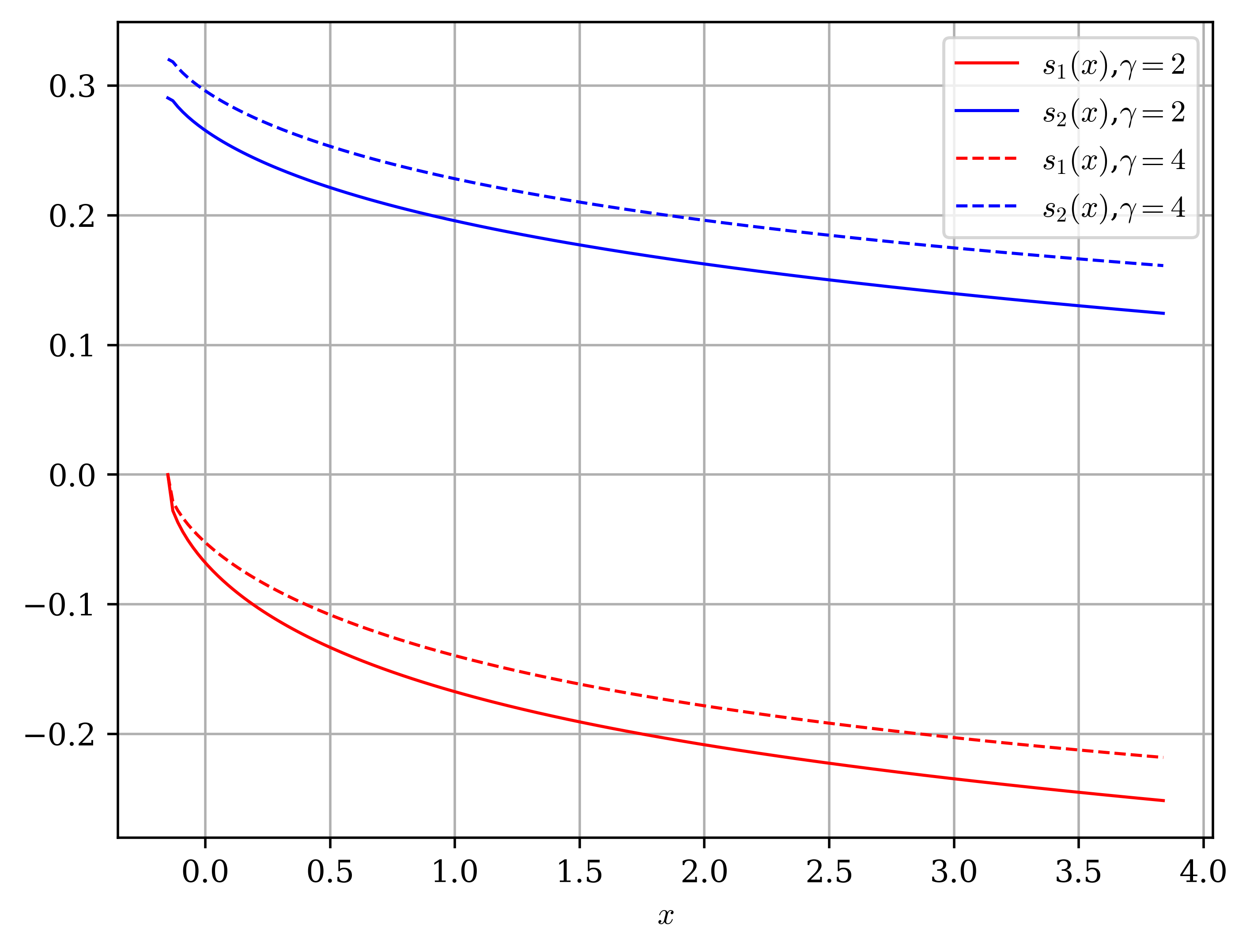} \\
	\end{tabular}
\end{figure}
\begin{figure}[ht]
   \centering
    \caption{MPC for $\gamma=2$ and $\gamma=4$}\label{c prime}
    \includegraphics[width=0.45\linewidth]{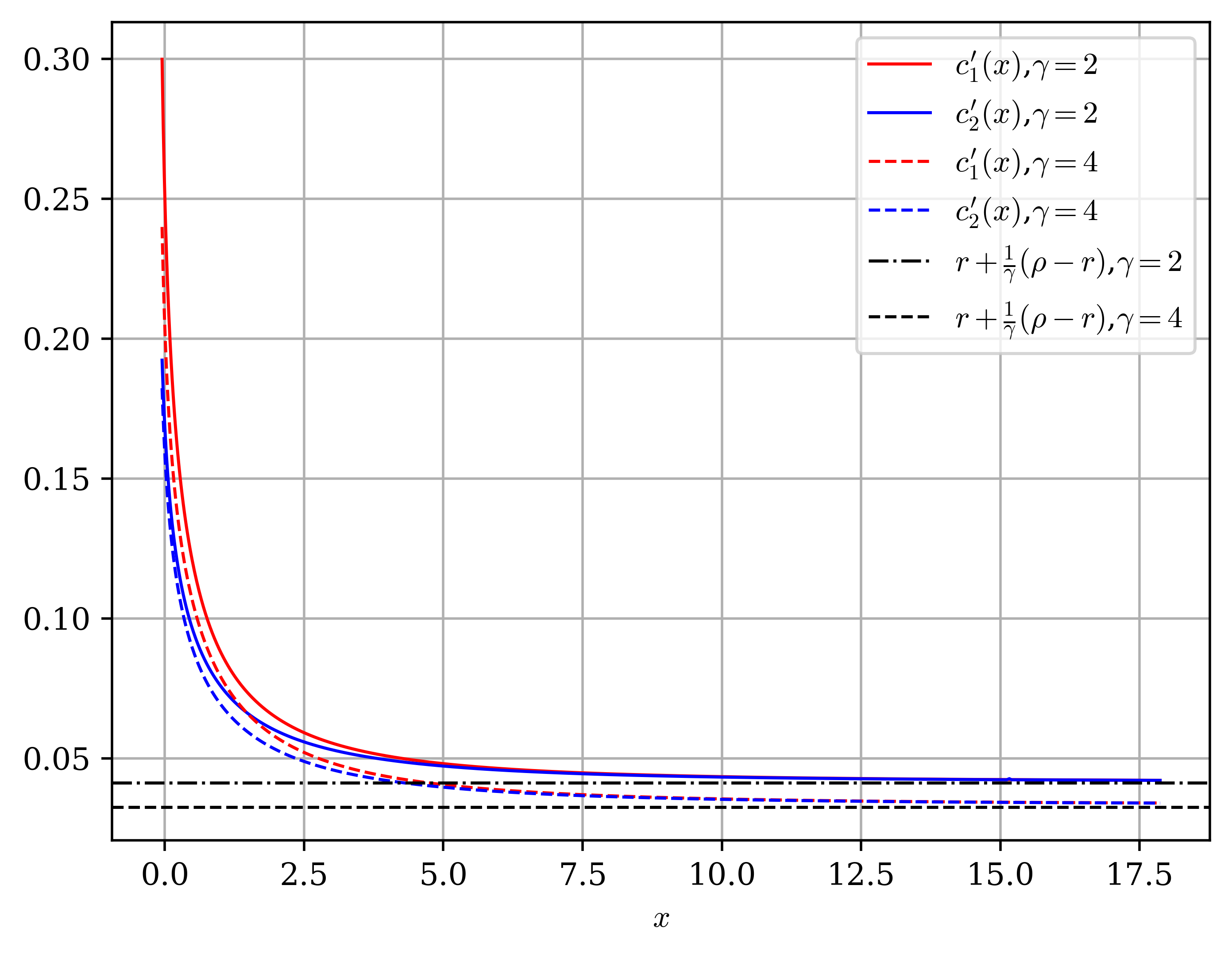}
\end{figure}\\
\noindent \textbf{Transition models}\\
To study the transition to this stationary equilibrium, we set $A(t) = A^{st}$ and initialize the system with a distribution $dm_j(0)$ obtained from a stationary model with a different initial productivity level $A(0)$. A sudden shift in productivity—commonly referred to as an MIT shock—induces time evolution in both the distribution $dm(t)$ and the interest rate $r(t)$ as the economy converges toward the new steady state.\par
 Considering a model starting with TFP $A=0.9$ and wealth distribution $\mathsf{g}_j$. We use a stationary equilibrium with $A^{{st}}=1.0$ to define the terminal value function. We then solve the dynamic Aiyagari model on the time horizon $[0,T]$ with Algorithm \ref{algo:D Aiyagari}. \cref{transition interest} shows the result $r(t)$ and we observe the interest rate fluctuation in response to this shock. After an initial jump of $r(t)$, $K(t)$ will increase so that $r(t)$ goes down until the stationary state $r^{{st}}$.
\begin{figure}[h]
    \centering
    \caption{Evolution of interest rate in a dynamic Aiyagari model }\label{transition interest}
    \includegraphics[width=0.45\linewidth]{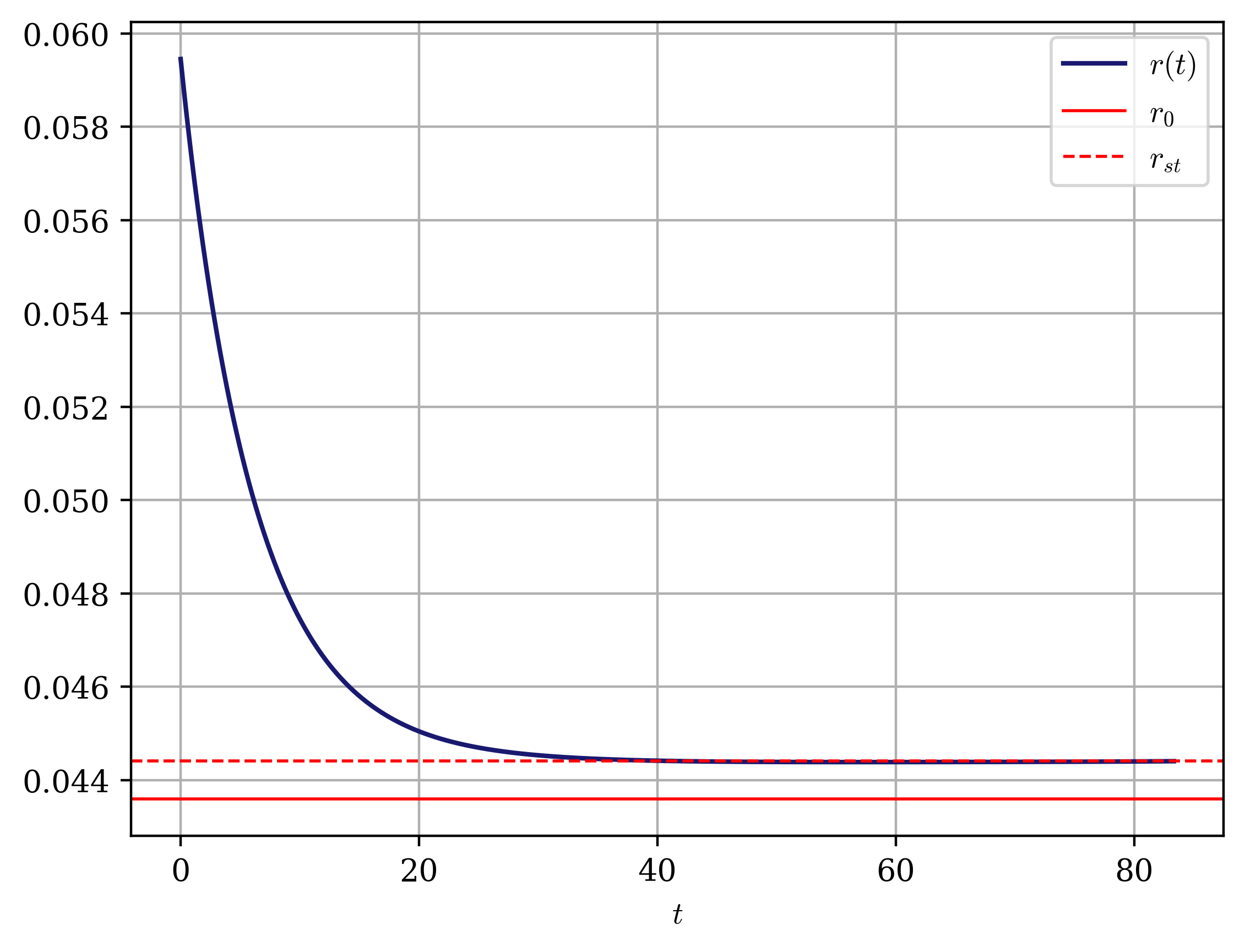}
\end{figure}

\bigskip
\noindent {\bf Acknowledgment.}
\small{This work is completed while Qing Tang is a visitor in Universit\'e Paris Cit\'e, supported by China Scholarship Council No. 202406410221. Qing Tang thanks Prof. Yves Achdou for discussion and hospitality. Yong-Shen Zhou acknowledges the support of the China Scholarship Council No. 202404610016.}

\bibliographystyle{siam} 
\bibliography{Aiyagari_SL_arxiv}

%
%
%

%
	\end{document}